\documentclass[pdflatex,sn-mathphys-num]{sn-jnl}

\usepackage{graphicx}%
\usepackage{multirow}%
\usepackage{amsmath,amssymb,amsfonts}%
\usepackage{amsthm}%
\usepackage{mathrsfs}%
\usepackage[title]{appendix}%
\usepackage{xcolor}%
\usepackage{textcomp}%
\usepackage{manyfoot}%
\usepackage{booktabs}%
\usepackage{algorithm}%
\usepackage{algorithmicx}%
\usepackage{algpseudocode}%
\usepackage{listings}%

\usepackage{stmaryrd}
\usepackage{epsfig}
\usepackage{epstopdf}
\usepackage{tikz}
\usepackage{pgfplots}
\usepackage{caption}
\usepackage[normalem]{ulem}

\theoremstyle{thmstyleone}%
\newtheorem{theorem}{Theorem}
\theoremstyle{thmstyletwo}%
\newtheorem{remark}{Remark}%

\theoremstyle{thmstylethree}%
\newtheorem{definition}{Definition}%

\newcommand{\p}{\partial}
\newcommand{\bn}{{\bf n}}
\newcommand{\bPhi}{{\bf \Phi}}
\newcommand{\bPi}{{\bf \Pi}}
\newcommand{\bPsi}{{\bf \Psi}}

\newcommand{\cA}{{\mathcal A}}
\newcommand{\cB}{{\mathcal B}}

\newcommand{\cK}{{\mathcal K}}

\newcommand{\cN}{{\mathcal N}}
\newcommand{\cP}{{\mathcal P}}
\newcommand{\cR}{{\mathcal R}}

\newcommand{\cS}{{\mathcal S}}
\newcommand{\cT}{{\mathcal T}}
\newcommand{\cV}{{\mathcal V}}
\newcommand{\cW}{{\mathcal W}}

\newcommand{\lifting}{\mathbf{ L}}

\newcommand{\spaceFEM}{V_{hp}(\Omega^1)}
\newcommand{\spaceFEMgradstrip}{{\textbf{W}_{hp}(\Omega^1)}}
\newcommand{\spaceFEMgradstripcomp}{{{W}_{hp}(\Omega^1)}}
\newcommand{\spaceFEMgrad}{\textbf{W}_{hp}(\Omega^1)}
\newcommand{\spaceFEMgradK}{\textbf{W}_{hp}(K)}

\newcommand{\spaceFEMsig}{V^{\sigma,\mu}_{hp}(\Omega^1)}
\newcommand{\spaceFEMgradsig}{\textbf{W}^{\sigma,\mu}_{hp}(\Omega^1)}

\newcommand{\spaceBEM}{V_{hp}(\Gamma^2)}
\newcommand{\spaceBEMtrac}{W_{hp}(\Gamma^2)}
\newcommand{\spaceFBEM}{V_{hp}}
\newcommand{\spaceFBEMsig}{V^{\sigma,\mu}_{hp}}

\newcommand{\spaceBEMsig}{V^{\sigma,\mu}_{hp}(\Gamma^2)}
\newcommand{\spaceBEMgradsig}{W^{\sigma,\mu}_{hp}(\Gamma^2)}

\newcommand{\fspace}{V}
\newcommand{\fspaceFEM}{V(\Omega^1)}
\newcommand{\fspaceBEM}{V(\Gamma^2)}

\newcommand{\Gstrip}{{\Omega^1}}

\newcommand{\resphi}{R_{hp}}
\newcommand{\res}[1]{{R_{hp,{#1}}}}

\newcommand{\invrate}{\mathcal G}

\newcommand{\meshFEM}{{\mathcal T_{hp}(\Omega^1)}}

\newcommand{\meshFEMsig}{{\mathcal T_{hp}^{\sigma}(\Omega^1)}}
\newcommand{\meshFEMsigO}{{\hat{\mathcal{T}}_{hp}^{\sigma}(\Omega^1)}}

\newcommand{\meshBEM}{{\mathcal T_{hp}(\Gamma^2)}}
\newcommand{\meshBEMsig}{{\mathcal T^{\sigma}_{hp}(\Gamma^2)}}

\newcommand{\meshIone}{{\mathcal T_{hp}(\Gamma^1_I)}}
\newcommand{\meshItwo}{{\mathcal T_{hp}(\Gamma^2_I)}}

\newcommand{\bilt}{A}
\newcommand{\bilinc}{a_{hp}}
\newcommand{\bilint}{\bilt_{hp}}

\newcommand{\normp}[1]{{1/2,{#1}}}
\newcommand{\normm}[1]{{-1/2,{#1}}}
\newcommand{\norme}[1]{{{hp},{#1}}}

\newcommand{\bx}{{\bf x}}
\newcommand{\by}{{\bf y}}

\newcommand{\frA}{{\mathfrak A}}
\newcommand{\frB}{{\mathfrak B}}
\newcommand{\frC}{{\mathfrak C}}

\newcommand{\frR}{{\mathfrak R}}
\newcommand{\frS}{{\mathfrak S}}

\newcommand{\R}{{\mathbb R}}

\newtheorem{corollary}{Corollary}[section]
\newtheorem{lemma}{Lemma}[section]

\allowdisplaybreaks[3]

\def\NN{\mathbb{N}}
\def\RR{\mathbb{R}}

\def \bea  {\begin{eqnarray}}
\def \eea  {\end{eqnarray}}
\def \bean {\begin{eqnarray*}}
\def \eean {\end{eqnarray*}}

\def \la {\langle}
\def \ra {\rangle}

\DeclareMathOperator{\supp}{supp}

\DeclareMathOperator{\divv}{div}
\newcommand{\jump}[1]{[#1]}

\newcommand{\mbf}[1]{\mathbf{#1}}
\newcommand{\ud}{\,\mathrm{d}}

\newcommand{\ltwop}[2]{\left( #1  , #2 \right)}

\newcommand{\dualp}[2]{\la #1 ,#2 \ra}
\newcommand{\eqspl}[2]{\begin{equation}\label{#1}\begin{split} #2 \end{split}\end{equation}}

\newcommand{\alil}[2]{\begin{align}\label{#1} #2 \end{align}}
\newcommand{\norm}[2]{\left\| #1 \right\|_{#2}}

\title{Nonconforming $hp$-FE/BE coupling on unstructured meshes based on Nitsche's method}

\author*[1]{\fnm{Alexey} \sur{Chernov}}\email{alexey.chernov@uni-oldenburg.de}
\equalcont{These authors contributed equally to this work.}

\author*[2]{\fnm{Peter} \sur{Hansbo}}\email{peter.hansbo@ju.se}
\equalcont{These authors contributed equally to this work.}

\author*[3]{\fnm{Erik Marc} \sur{Schetzke}}\email{erik.marc.schetzke@fkie.fraunhofer.de}
\equalcont{These authors contributed equally to this work.}

\affil*[1]{\orgdiv{Institut f\"ur Mathematik}, \orgname{Carl von Ossietzky Universit\"at}, \orgaddress{\street{Ammerl\"ander Heerstra\ss{}e 114-118}, \city{Oldenburg}, \postcode{26129}, 
\country{Germany}}}

\affil*[2]{\orgdiv{Department of Mechanical Engineering}, \orgname{J\"onk\"oping University}, \orgaddress{
 \city{J\"onk\"oping}, \postcode{SE-551 11}, \country{Sweden}}}

\affil*[3]{\orgdiv{Communication systems}, \orgname{Fraunhofer FKIE}, \orgaddress{\street{Zanderstra\ss e 5}, \city{Bonn}, \postcode{53177}, 
\country{Germany}}}

\abstract{
We construct and analyse a $hp$-FE/BE coupling on non-matching meshes, 
based on Nitsche's method. Both the mesh size and the polynomial degree are 
changed to improve accuracy. Nitsche's method leads to a positive definite
formulation, thus, unlike the mortar method, it does not require the 
\textit{Babu\v{s}ka-Brezzi} condition for stability. The method is stable
provided the stabilization function is larger than a certain threshold.
We obtain an \textit{explicit} bound for the threshold and derive 
\textit{a priori} error estimates. Our analysis can be easily extended to the
pure FE or the pure BE decomposition as well as to the case of more than two
subdomains. The problem in a bounded domain is considered in detail, but the
case of an unbounded BE subdomain and a bounded FE subdomain follows with
similar arguments. We develop convergence analysis and provide numerical examples for 
quasi-uniform as well as geometrically refined \emph{hp} discretisations in both subdomains
with analytic and singular solutions. 
}

\keywords{Finite elements, boundary elements, interface problem, Nitsche's method}
\begin{document}
\maketitle

\section{Introduction}
The focus of the present work is the analysis of Nitsche-type coupling in the $hp$-setting on
non-matching meshes. In particular, we derive stability conditions and a priori error estimates
with explicit dependence on both the local mesh size and polynomial degree, including the case
of geometrically graded $hp$ discretisations for singular solutions. The resulting method is
positive definite and avoids the inf--sup stability constraints that arise in mortar
formulations.

Coupling finite element and boundary element methods is a classical approach for the numerical
treatment of interface problems and problems posed on unbounded domains. Early formulations
were based on conforming discretisations or on saddle point formulations employing Lagrange
multipliers, leading to mortar-type methods that require the verification of inf--sup stability
conditions, cf., e.g., Bernardi, Maday, and Patera \cite{bernardi1993domain}. While such
approaches are well understood and widely used, their stability analysis and practical
implementation become increasingly delicate in the presence of non-matching meshes and variable
approximation orders.

An alternative to mortar formulations is provided by Nitsche's method \cite{Nit70}, originally
introduced for the weak imposition of boundary conditions in finite element methods and later
extended to domain decomposition and interface problems. In the context of FE/BE coupling,
Nitsche-type formulations offer the advantage of yielding positive definite systems while
avoiding additional multiplier spaces. This line of research has been pursued by several
authors, including Becker, Hansbo, and Stenberg \cite{BeHaSg03}, as well as Heinrich and
coworkers \cite{HeNi03,HeJu06}, among others. These works establish stability and convergence
for fixed-order discretisations and provide a solid foundation for weakly enforced coupling
conditions.

At the same time, the development of $hp$-finite element methods has demonstrated that
combining mesh refinement with variable polynomial degree can lead to exponential convergence
rates, particularly for problems with singular solutions. The analysis of $hp$-methods,
however, relies on refined inverse estimates and weighted Sobolev space techniques, and
extending coupling strategies such as Nitsche's method to the $hp$-setting is therefore 
non-trivial. In particular, stability conditions that are adequate for fixed-order
discretisations do not automatically carry over to non-uniform $hp$-meshes or geometrically
graded refinements.

The present work addresses this gap by analysing a Nitsche-type FE/BE coupling in an 
$hp$-framework on non-matching meshes. The emphasis is not on proposing a new coupling
paradigm, but on establishing stability conditions and a priori error estimates that are
explicit in both the local mesh size and the polynomial degree. Special attention is paid to
the choice of the stabilisation parameter, which is shown to depend locally on inverse
inequalities rather than on global quasi-uniformity assumptions. This makes the approach
compatible with standard $hp$-strategies for singularly perturbed or corner-dominated problems.

Related Nitsche-based formulations for FE/BE coupling have also been developed in the work of
Betcke and co-authors \cite{betcke2022,betcke2019boundary}. While sharing the use of weakly
imposed coupling conditions, these approaches address complementary questions and are primarily
concerned with flexibility of discretisations and algorithmic robustness. In contrast, the
present analysis focuses on the approximation and stability properties of Nitsche-type coupling
in the $hp$-setting, and in particular on the explicit dependence of stability conditions and
error estimates on local mesh size and polynomial degree.

The remainder of the paper is organised as follows. We first introduce the model problem,
notation, and functional analytic setting underlying the proposed formulation. We then describe
the $hp$--Nitsche coupling and its discretisation on non-matching meshes. Stability of the
method and appropriate choices of the stabilisation parameter are analysed next, followed by a
priori error estimates. Finally, numerical experiments are presented to illustrate the
theoretical results.

\subsection{Problem setting and analytical framework}

In this section we introduce the model problem, notation, and functional analytic setting used throughout the paper. The material is largely standard in the context of FE/BE coupling and is collected here to fix notation.

Let $\Omega$ be a bounded open polygonal domain $\Omega\subset\RR^2$,
decomposed into two simply connected mutually disjoint domains, such that 
\[
	\overline{\Omega} = \overline{\Omega^1} \cup \overline{\Omega^2}.
\]
Let $\Gamma := \partial\Omega, \Gamma^i := \partial\Omega^i,i=1,2,
\Gamma_I := \Gamma^1\cap\Gamma^2$ and $\bn$ be the unit outward normal to 
$\Gamma$. As a model problem we take the diffusion equation in $\Omega$ with
mixed boundary conditions: find $u: \overline{\Omega}\rightarrow\RR$ solving
\begin{equation}\label{p1}
\begin{split}
		-\divv(\kappa\nabla u) &= f\quad \mbox{in}\; \Omega, \\
		u&= 0\quad \mbox{on}\; \Gamma_D,\\
		\kappa\nabla u\cdot\bn &= g \quad \mbox{on}\; \Gamma_N,
\end{split}
\end{equation}
with a disjoint decomposition 
$\overline{\Gamma} = \overline{\Gamma}_D \cup \overline{\Gamma}_N$, 
prescribed volume force $f$ and normal derivative $g$. If the solution $u$ is
sufficiently smooth along $\Gamma_I$, problem \eqref{p1} in $\Omega$
is equivalent to the following interface problem, cf. \cite{Babu70}:\\

\noindent
\begin{minipage}{0.42\textwidth}
Find $u:\overline{\Omega}\rightarrow\RR$, such that\\
\begin{equation}\label{p2}
\begin{split}
		-\divv(\kappa\nabla u) &= f\quad \mbox{in}\;\Omega,\\
		u&=0\quad \mbox{on}\; \Gamma_D,\\
		\kappa\nabla u\cdot\bn &= g\quad \mbox{on}\; \Gamma_N,\\
		\jump{u}=0,\quad \jump{\kappa\nabla u\cdot\bn}&=0
		\quad\mbox{on}\; \Gamma_I. 
\end{split}		
\end{equation}
\end{minipage}
\hspace{0.05\textwidth}
\begin{minipage}{0.5\textwidth}
\begin{tikzpicture}[scale=1.8]
\definecolor{ilpablue}{rgb}{0.16,0.43,0.75}
\definecolor{dgreen}{rgb}{0.0,0.39,0.0}
\draw[ilpablue, ultra thick, -] (1,-0.5) -- (1.3,0.7) -- (-0.3, 1.2) -- (-1.8, 0.8) -- (-0.87, -0.42); 
\draw[black, ultra thick, -] (-0.87, -0.42) -- (0,0) -- (1,-0.5); 
\draw[dgreen, dashed, ultra thick] (0,0) -- +(-0.3, 1.2); 
\draw (0.5,0.5) node[text=black] {$\Omega^2$};
\draw (-0.85,0.3) node[text=black] {$\Omega^1$};
\draw (0.06,0.5) node[text=black!60!green] {$\Gamma_I$};
\draw (-0.03,-0.2) node[text=black] {$\Gamma_D$};
\draw (-0.35,1.4) node[text=ilpablue] {$\Gamma_N$};
\draw[black,->] (1.14,0.75) -- (1.20,0.99); 
\draw  (1.3,0.89) node[text=black] {$\textbf{n}^2$};
\draw[black,->] (-1.18, -0.013) -- (-1.354, -0.097); 
\draw  (-1.23,-0.163) node[text=black] {$\textbf{n}^1$};
\draw (-2.0, 0.8) node[text=black] {$A_j$};
\draw (1.2,-0.5) node[text=black] {$A_i$};
\end{tikzpicture}

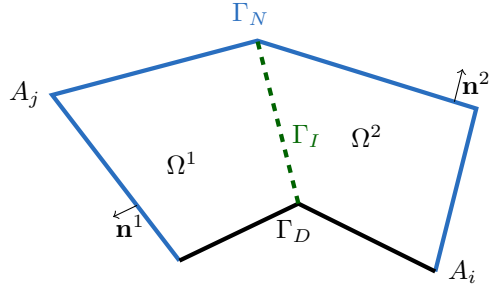
\captionof{figure}{Domain decomposition of $\Omega = \Omega^1 \cup \Omega^2$ with 
$\overline{\Gamma} = \overline{\Gamma_D} \cup \overline{\Gamma_N},$ and 
$\Gamma_I = \partial \Omega^1 \cap \partial \Omega^2$.}
\label{fig:domdecomp}
\end{minipage}
$~$\\

The solution of \eqref{p2} is considered as a pair $u = (u^1,u^2)$, where 
$u^i := u|_{\Omega^i}$. Then the jumps across the interface $\Gamma_I$ are
defined by
\[
	\jump{u} := u^1|_{\Gamma_I} - u^2|_{\Gamma_I},\qquad 
	\jump{\kappa\nabla u\cdot \bn} := \nabla u^1|_{\Gamma_I}\cdot\bn^1 
	+ \nabla u^2|_{\Gamma_I}\cdot\bn^2,
\]
where $\bn^i,i=1,2$ denotes the outward normal vector field to $\Gamma^i$. 
For brevity for $i=1,2$ and $A \in \{D,N\}$ we denote 
$\Gamma^i_A := \Gamma_A\cap\Gamma^i$ and $\Gamma^2_F:= \Gamma^2_N\cup\Gamma_I$.
For some $\omega\subset\RR^r,r=1,2$ and its boundary (part) 
$\gamma\subseteq\partial\omega$ and sufficiently smooth functions
$\mbf{u},\mbf{v}: \omega\rightarrow\RR,\; u,v:\omega\rightarrow\RR$ we define
\[
	\ltwop{\mbf{u}}{\mbf{v}}_\omega := \int_\omega \mbf{u}\cdot\mbf{v}\ud x,
	\quad
	\ltwop{u}{v}_\omega := \int_\omega uv\ud x,
	\quad
	\dualp{u}{v}_\gamma := \int_\gamma uv\ud s,
\]
which can be regarded as scalar product and/or duality pairing on the
corresponding Sobolev spaces on $\omega$ and $\gamma$. 

We  shall consider finite elements on $\Omega_1$ and boundary elements on $\Gamma_2$ below, thus defining discrete subspaces to
$V = V(\Omega^1) \times V(\Gamma^2)$ with 
\begin{equation*}
\begin{split}
	V(\Omega^1) &:= \left\{ u\in H^1(\Omega^1): u|_{\Gamma^1_D} = 0\right\},\\
	V(\Gamma^2) &:= \left\{ u\in H^{1/2}(\Gamma^2): 
	\supp u\subset\Gamma^2_F\right\} = \tilde{H}^{1/2}(\Gamma^2_F).
\end{split}
\end{equation*}
For the diffusion coefficient $\kappa$ we assume
\begin{equation}\label{kappa_bnd}
\begin{split}
	\exists\kappa_{\min},\kappa_{\max}\in(0,\infty): 
	\kappa_{\min}\leq \kappa(\mbf{x})\leq\kappa_{\max}\quad&
	\mbox{a.e. in } \Omega^1,\\
	\kappa(\mbf{x}) = 1\quad&\mbox{a.e. in } \Omega^2.
\end{split}
\end{equation}

For simplicity we will assume that $\kappa \in L^\infty(\Omega^1)$ is piecewise
constant on the underlying mesh, i.e.
\[
\kappa|_K = \kappa_K, \qquad \forall \text{ elements } K.
\]
Then we have in $\Omega^1$
\begin{equation*}
	\kappa_{\min}\norm{\nabla u}{L^2(\Omega^1)}^2 \leq 
	\ltwop{\kappa\nabla u}{\nabla u}_{\Omega^1} \leq
	\kappa_{\max}\norm{\nabla u}{L^2(\Omega^1)}^2
	\qquad\forall u\in H^1(\Omega^1),
\end{equation*}
and the subproblem in $\Omega^2$ admits boundary reduction via Green's formula
and \eqref{eq:intCalderon} below
\begin{equation}\label{Green}
	\ltwop{\nabla u^2}{\nabla \varphi^2}_{\Omega^2} 
	- \ltwop{f}{\varphi^2}_{\Omega^2} 
	= \dualp{\cS u^2 -\cN f}{\varphi^2}_{\Gamma^2}.
\end{equation}
Here and in what follows  we abbreviate the Dirichlet trace $u^2|_{\Gamma^2}$ 
by $u^2$, when it is clear from the context. In \eqref{Green}, $\cS$ is the
Steklov-Poincar\'e operator and $\cN$ is the Newton potential, defined as
follows. Let
\[
	G(\mbf{x},\mbf{y}) = -\frac{1}{2\pi}\log \left| \mbf{x}-\mbf{y} \right|
\]
be the fundamental solution of the Laplace equation in $\RR^2$. Then
\begin{equation}\label{NSdef}
	\cN := (\cK^\prime+1/2)\cV^{-1} \cN_0 - \cN_1,
	\qquad \cS := \cW + (\cK^\prime+1/2)\cV^{-1}(\cK+1/2),
\end{equation}
where for $\mbf{x}\in\Gamma^2$ we set
\begin{align*}
	(\cN_0 f)(\mbf{x}) &:= 
	\int_{\Omega^2} G(\mbf{x},\mbf{y}) f(\mbf{y}) \ud \mbf{y}, 
	&(\cN_1 f)(\mbf{x}) &:= 
	\int_{\Omega^2} (\nabla_{\mathbf{x}} G(\mbf{x},\mbf{y}) \cdot	
	\bn^2(\mathbf{x}) f(\mbf{y}) \ud \mbf{y},\\
	(\cV u)(\mbf{x}) &:= 
	\int_{\Gamma^2} G(\mbf{x},\mbf{y}) u(\mbf{y}) \ud s_{\mbf{y}},
	&(\cK u)(\mbf{x}) &:= 
	\int_{\Gamma^2} (\nabla_{\mbf{y}}G(\mbf{x},\mbf{y})\cdot \bn^2(\mbf{y})) 	
	u(\mbf{y}) \ud s_{\mbf{y}},\\
	(\cW u)(\mbf{x}) &:= 
	-\nabla_{\mbf{x}}(\cK u)(\mbf{x})\cdot \bn^2(\mbf{x}),
	&(\cK^\prime u)(\mbf{x}) &:= 
	\int_{\Gamma^2} (\nabla_{\mbf{x}}G(\mbf{x},\mbf{y})\cdot \bn^2(\mbf{x}))
	u(\mbf{y}) \ud s_{\mbf{y}}.
\end{align*}	
For later reference and as a means to derive the symmetric representation of the
Steklov-Poincar\'e operator, we recall the Calder\`on system for the trace and
conormal derivative on the boundary element domain $\Gamma^2$. For the interior
Poisson problem there holds, cf. e.g. \cite[p. 137]{Stb08},
\begin{equation}\label{eq:intCalderon}
	\begin{pmatrix}
		u^2\\ \frac{\p u^2}{\p \bn^2} 
	\end{pmatrix}
	= 
	\begin{pmatrix}
	\tfrac{1}{2} - \mathcal{K}& \mathcal{V}\\
	\mathcal{W}& \frac{1}{2} + \mathcal{K}'
	\end{pmatrix}
	\begin{pmatrix}
		u^2\\ \frac{\p u^2}{\p \bn^2} 
	\end{pmatrix}
	+
	\begin{pmatrix}
		\mathcal{N}_0 f\\ 		\mathcal{N}_1 f
	\end{pmatrix}.
\end{equation}

We refer to \cite{CarSt95,Csb88,StrSwb04,St04} and references therein for
further details. The properties of the above boundary integral operators yield
continuity and coercivity of $\cS$ on $V(\Gamma^2)$:
\begin{equation}\label{coel_S}
\begin{array}{lclr}
	\exists c_\cS>0:\dualp{\cS u}{u}_{\Gamma^2} &\geq& 
	c_\cS \norm{u}{H^{1/2}(\Gamma^2)}^2, &\forall u\in V(\Gamma^2),\\[1ex]
	\exists C_\cS>0:\dualp{\cS u}{v}_{\Gamma^2} &\leq& 
	C_\cS \norm{u}{H^{1/2}(\Gamma^2)} \norm{v}{H^{1/2}(\Gamma^2)},
	&\forall u,v\in V(\Gamma^2).
\end{array}
\end{equation}

\subsection{Discretisation}\label{subsec:Discr}
We use a finite element discretisation in $\Omega^1$ and a boundary element discretisation on $\Gamma^2$. Note however, that construction and
convergence in case of finite element discretisations in both subdomains $\Omega^1$ and $\Omega^2$ follow with the same arguments. 
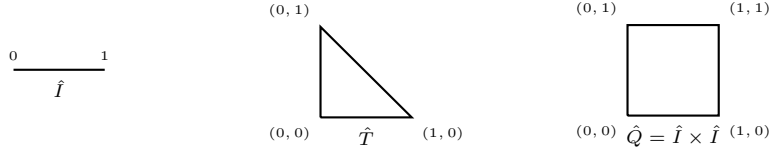
\begin{figure}[t]
\centering
\begin{minipage}{0.3\textwidth}
\begin{center}
\begin{tikzpicture}[scale=1.2]
\draw[black, thick,-] (0,0) node[above] {\tiny{$0$}}-- (0.5,0) node[below] {\footnotesize{$\hat I$}} -- (1,0) node[above] {\tiny{$1$}};
\end{tikzpicture}
\end{center}
\end{minipage}
\begin{minipage}{0.3\textwidth}
\begin{center}
\begin{tikzpicture}[scale=1.2]
\draw[black, thick, -] (0,0) node[below left] {\tiny{$\left( 0,0\right)$}}-- (0,1) node[above left] {\tiny{$\left( 0,1\right)$}} -- (1,0) node[below right]
{\tiny{$\left( 1,0\right)$}}
-- (0.5,0) node[below] {\footnotesize{$\hat T$}} -- (0,0);
\end{tikzpicture}
\end{center}
\end{minipage}
\begin{minipage}{0.3\textwidth}
\begin{center}
\begin{tikzpicture}[scale=1.2]
\draw[black, thick, -] (0,0) node[below left] {\tiny{$\left(0,0\right)$}} -- (0,1) node[above left] {\tiny{$\left(0,1\right)$}}
-- (1,1) node[above right] {\tiny{$\left(1,1\right)$}} -- (1,0) node[below right] {\tiny{$\left(1,0\right)$}} 
-- (0.5,0) node[below] {\footnotesize{$\hat Q = \hat I \times \hat I$}}
 -- (0,0);
\end{tikzpicture}\\
\end{center}
\end{minipage}
\caption{Reference finite and boundary elements in 1 and 2 dimensions} \label{fig:elem123}
\end{figure}
We call $\hat{K}$ a reference finite (resp. boundary) element, if $\hat{K}$ is
a (possibly trivial) Cartesian product of $m=1$ or $2$ reference simplices, cf. Figure
\ref{fig:elem123}.
In the following we shall consider affine meshes $\mathcal T_{hp}$ in 
$\Omega^1$ and $\Gamma^2$, respectively. Define
\begin{equation}
	\begin{split}
		 \mathcal T_{hp}(\Omega^1) 
		   &:= \left\{ K = F_K(\hat{K})\subset\Omega^1, F_K \mbox{ affine }, 
		   \hat{K}=\hat{T} \mbox{ or } \hat{Q} \right\},\\
		 \mathcal T_{hp}(\Gamma^2) 
		   &:= \left\{ K = F_K(\hat{I})\subset\Gamma^2, F_K \mbox{ affine } \right\}.
	\end{split}
\end{equation}

Set $h_K = \text{diam}(K)$ and assume that $\cT_{hp}(\Omega^1)$ is shape regular, i.e. there
exists a constant $\tau>0$, such that
\[
	\frac{h_K}{\varrho_K}\leq \tau \qquad\forall K\in\cT_{hp}(\Omega^1),
\]
where $\varrho_K = \sup\{\text{diam}(B): B \text{ is a ball contained in } K \}$. Moreover, we assume that $\cT_{hp}(\Omega^1)$ is regular.
Thus, for every $K,K'\in\cT_{hp}(\Omega^1)$ the intersection $K\cap K'$ is either empty, consists of a common vertex or an entire edge of $K$.

Let $\cP_p(T)$ be the space of polynomials on a triangle $T$ of total degree 
$\leq p$.
We define a (product) space of polynomials on the respective reference element $\hat{K}$ via
\[
	\cR_{p_K}(\hat{K}) := \bigotimes^m_{i=1}\cP_{p_K}(T_i) = 
	\begin{cases}
		\cP_{p_K}(\hat{I}), & \hat{K}=\hat{I},\\
		\cP_{p_K}(\hat{T}), & \hat{K}=\hat{T},\\
		\cP_{p_K}(\hat{I})\otimes\cP_{p_K}(\hat{I}), & \hat{K} = \hat{Q}.
	\end{cases}
\]

Based on the definitions above, we introduce a conforming discretisation of $V$ by continuous
piecewise polynomials $V_{hp} := V_{hp}(\Omega^1) \times V_{hp}(\Gamma^2) \subset V$
\begin{equation}
\begin{split}\label{def:hpspaces}
	V_{hp}(\Omega^1) &:= \left\{ \Phi \in V(\Omega^1): 
	\Phi\circ F_K \in \mathcal{R}_{p_K}(F^{-1}_K(K)),\;\forall K\in \meshFEM\right\},\\
	V_{hp}(\Gamma^2) &:= \left\{ \Phi \in V(\Gamma^2): 
	\Phi \circ F_K \in \mathcal{P}_{p_K}(F^{-1}_K(K)),\;\forall K\in \meshBEM\right\}.\\
\end{split}
\end{equation}
Note that the homogenous Dirichlet boundary conditions are incorporated into $V_{hp}(\Omega^1)$ 
and $V_{hp}(\Gamma^2)$. For gradients in $V_{hp}(\Omega^1)$ we define conforming
discretisations $\mbf{W}_{hp}(\Omega^1)\subset \left[L^2(\Omega^1)\right]^2$ 
and $W_{hp}(\Gamma^2)\subset H^{-1/2}(\Gamma^2)$ for 
normal derivatives on $\Gamma^2$ by
\begin{equation}\label{def:gradientspaces}
\begin{split}
	\mbf{W}_{hp}(\Omega^1) &:= \left\{ \Phi\in\left[ L^2(\Omega^1)\right]^2: 
	\Phi\circ F_K\in\left[\cR_{p_K}(F^{-1}_K(K))\right]^2,\,
	\forall K\in \meshFEM \right\},\\[2mm]
	W_{hp}(\Gamma^2) &:= \left\{ \Phi\in L^2(\Gamma^2): \Phi \circ F_K \in 
	\cP_{p_K}(F^{-1}_K(K)),\, \forall K\in \meshBEM\right\}.
\end{split}
\end{equation}
For the analysis of the $hp$-Nitsche method for problems with corner singularities and
geometric mesh refinement we briefly outline the discretisation. 
Let the corners of $\Omega$ be enumerated as $A_1,\dots,A_M\in\mathbb{R}^2$,
such that $\overline\Gamma = \cup^M_{j=1} \overline\Gamma_j$
with straight arcs $\Gamma_j$ as $\Omega$ is a polygonal domain,
(cf. Figure \ref{fig:domdecomp}) with possible singularities situated at $A_i$.
We introduce geometric meshes on $\Omega^1$ and $\Gamma^2$, namely $\meshFEMsig$ and 
$\meshBEMsig$ for given $\sigma\in(0,1)$, where $\sigma$ may differ on the given meshes.

To obtain a geometric refinement in the FE subdomain $\Omega^1$ we
proceed as follows. 
Consider a coarse regular initial mesh $\meshFEMsigO$
consisting of triangles and parallelograms, such that for each corner 
$A_i,i=1,\dots,K$ of $\Omega^1$
with interior angle $\omega_i$ there are at most three parallelograms sharing
the common corner $A_i$, compare e.g. \cite[Section 4.5.3]{Swb98}. 
Denote by 
\begin{equation}\label{MeshSingPart}
	\hat{\frS}^i := \{K\in\meshFEMsigO: A_i\in \overline K\}
\end{equation}
the set of \emph{singular} elements containing the singularity at $A_i$ and for convenience we
also denote the complement, the set of \emph{regular} 
elements in $\meshFEMsigO$ that do not abut the singularity, by
\begin{equation}\label{MeshRegPart}
	\hat{\frR} := \{ K \in \meshFEMsigO: K\not\in \hat{\frS}\}
	\text{ with } \hat{\frS} := \cup_i \hat{\frS}^i.
\end{equation}
An initial geometric refinement with a fixed $\sigma\in(0,1)$
towards the singular vertex $A_i$ with 3 layers
(cf. Figure \ref{mesh:georef}) is now affinely mapped onto all $K\in\hat{\frS}^i$, $\forall i$.
Subsequently eliminating irregular nodes in $\meshFEMsigO$  by suitably 
refining or splitting some elements in $\hat{\frR}$, we construct a regular 
geometrically refined mesh $\meshFEMsig$.
Hence, we obtain a decomposition
\begin{equation}\label{SingMeshSplit}
	\meshFEMsig = \frS\cup \frR,\qquad \text{ with } \frS = \cup_i \frS^i.
\end{equation}
Moreover, all elements that arise from further geometric refinement of
elements in $\frS^i$ are contained in $\frS^i$. We set 
$\frS^i = \cup_{k=0}^n \frS^i_k$ and define the terminal layers 
$\frS^i_0$ and the $k$-th layers of $\frS^i$ by
\[
\begin{array}{rcl}
	\frS^i_0 &=& \{K\in\meshFEMsig: A_i\in\overline{K}\},\\
	\frS^i_k &=& \left\{K\in\meshFEMsig\backslash\left(\cup^{k-1}_{j=0} 
	\frS^i_j\right): \overline{K}\cap\overline{K^\prime}\neq\emptyset,
	K^\prime\in \frS^i_{k-1} \right\}.
\end{array}
\]
We collect the polynomial degrees of elements in the set 
$\mathbf{p} = \{p_K: K\in\meshFEMsig)$ and call $\mathbf{p}$ linear with slope $\mu>0$, if
for the elements in a mesh $\meshFEMsig$ with $n$ layers there holds 
$K\in\frS_j:=\cup_i \frS^i_j,j\geq 0:\: p_K = p_j,\: 
p_j =\max\{j+1,\lfloor\mu(j+1) \rfloor \}$
and $p_K=\max\{n+1,\lfloor\mu(n+1)\rfloor\}$ for $K\in\frR$, where the parameter
$\mu$ remains to be chosen in dependency of $\sigma$. 
\begin{figure}[t]
\centering
\begin{minipage}{0.45\textwidth}
\begin{center}
\begin{tikzpicture}[scale=4]
\draw[black, thick,-] (0,0) -- (0.125,0) -- (0.125,0.125) -- (0,0.125) -- (0,0);
\draw[black, thick,-] (0,0.125) -- (0,0.25) -- (0.125,0.25) -- (0.125,0.125); 
\draw[black, thick,-] (0.125,0) -- (0.25,0) -- (0.25,0.125) -- (0.125,0.125); 
\draw[black, thick,-] (0.125,0.25) -- (0.25,0.25) -- (0.25,0.125); 

\draw[black, thick,-] (0.25,0.125) -- (0.5,0) -- (0.25,0); 
\draw[black, thick,-] (0.25,0.125) -- (0.5,0.25) -- (0.5,0); 
\draw[black, thick,-] (0.25,0.25) -- (0.5,0.25) -- (0.5,0.5) -- (0.25,0.5) -- (0.25,0.25); 

\draw[black, thick,-] (0.125,0.25) -- (0.25,0.5) -- (0,0.5) -- (0.125,0.25); 
\draw[black, thick,-] (0,0.25) -- (0,0.5); 

\draw[black, thick,-] (0.5,0) -- (1,0) -- (0.5,0.25) -- (1,0.5) -- (1,0);
\draw[black, thick,-] (0.5,0.5) -- (1,0.5) -- (1,1) -- (0.5,1) -- (0,1) -- (0.25,0.5) -- (0.5,1) -- (0.5,0.5);
\draw[black, thick,-] (0,0.5) -- (0,1);
\end{tikzpicture}
\end{center}
\end{minipage}
\begin{minipage}{0.45\textwidth}
\begin{center}
\begin{tikzpicture}[scale=4]
\draw[black, thick,-] (0,0) -- (0.04,0) -- (0.04,0.04) -- (0,0.04) -- (0,0);
\draw[black, thick,-] (0,0.04) -- (0,0.2) -- (0.04,0.2) -- (0.04,0.04); 
\draw[black, thick,-] (0.04,0) -- (0.2,0) -- (0.2,0.04) -- (0.04,0.04); 
\draw[black, thick,-] (0.04,0.2) -- (0.2,0.2) -- (0.2,0.04); 

\draw[black, thick,-] (0.2,0.04) -- (1,0) -- (0.2,0); 
\draw[black, thick,-] (0.2,0.04) -- (1,0.2) -- (1,0); 
\draw[black, thick,-] (0.2,0.2) -- (1,0.2) -- (1,1) -- (0.2,1) -- (0.2,0.2); 

\draw[black, thick,-] (0.04,0.2) -- (0.2,1) -- (0,1) -- (0.04,0.2); 
\draw[black, thick,-] (0,0.2) -- (0,1); 
\end{tikzpicture}
\end{center}
\end{minipage}

\caption{Conforming geometric refinements on $[0,1]^2$ for a singularity in the lower left corner.  (left) refinement with 4 layers for $\sigma=0.5$,
(right) refinement with 3 layers for $\sigma=0.2$.} \label{mesh:georef}
\end{figure}
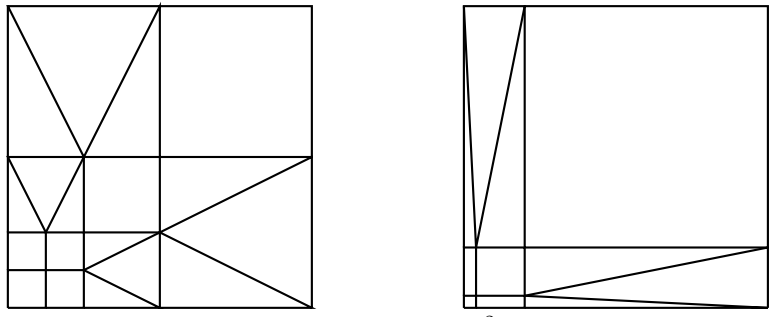

The construction of a geometrically refined mesh on $\Gamma^2$ is realised
by help of an affine mapping of a geometrically refined mesh on the interval 
$[0,1]$. More precisely, consider the geometric mesh 
$I^{\sigma,n}$ on $I:=[0,1]$ with $n$ layers $I_j = [x_{j-1},x_j], x_0 = 0, x_j 
= \sigma^{n+1-j}, h_j = x_j - x_{j-1}, 1\leq j\leq n+1$ and refinement parameter
$\sigma\in(0,1)$. For a linear degree vector $\mathbf{p}=(p_1,...,p_{n})$ with
slope $\mu >0 $ of nonnegative integers with 
$p_1 = 1, p_j = \max\{2,\lfloor \mu j\rfloor\},j=2,\dots,n$ let
\[
	V_{hp}^{\sigma,\mu}(I^{\sigma,n}) := 
	\{\Phi\in H^1(I): \Phi|_{I_j}\in\cP_{p_j}(I_j)\},
\]
where $\mu$ is chosen dependent on $\sigma$ in the proof for the 
$hp$-method.
Let $\Gamma^2$ be the union of $L$ straight arcs, i.e. 
$\Gamma^2 = \cup^L_{i=1} \Gamma^2_i$, and $\Gamma^2_j$ open straight arc with 
endpoints $B_{j},B_{j+1}\in\mathbb{R}^2$, where we use the convention that $B_{L+1} = B_1$. 
Each straight arc $\Gamma^2_j$ is now cut into two parts, such that 
$\Gamma^2_j = \Gamma^2_{j,1} \cup \Gamma^2_{j,2}$. 
Consequently, the geometric mesh on $I^{\sigma,n}$ is mapped to $\Gamma^2_{j,i}$ in such a
way that geometric refinement is carried out in direction of the vertices of 
$\Gamma^2_j$ on both subarcs.

The finite (resp. boundary) element spaces are then given by 
$\spaceFEMsig,\spaceFEMgradsig$ (resp. $\spaceBEMsig,\spaceBEMgradsig$) 
on $\meshFEMsig$ (resp. $\meshBEMsig$) and are defined in analogy to
\eqref{def:hpspaces} and \eqref{def:gradientspaces}. 
Similar to the quasi-uniform case we set 
\[\spaceFBEMsig :=  \spaceFEMsig \times \spaceBEMsig .\]
Furthermore, we denote by $\meshIone$ and $\meshItwo$ the trace meshes on 
$\Gamma_I$ induced by the partitions $\meshFEM$ and $\meshBEM$, respectively.
Analogously we define the trace meshes $\cT^{\sigma}_{hp}(\Gamma^1_I)$ and 
$\cT^{\sigma}_{hp}(\Gamma^2_I)$, which are induced by the partitions $\meshFEMsig$ and 
$\meshBEMsig$, respectively.

\begin{remark}
Note that in the following sections we state results on the space $V_{hp}$ and corresponding
gradient spaces $\mathbf{W}_{hp}(\Omega^1)$ and $W_{hp}(\Gamma^2)$. Unless specified
otherwise, these results also apply for the space $V^{\sigma,\mu}_{hp}$ and the
gradient spaces $\mathbf{W}^{\sigma,\mu}_{hp}(\Omega^1)$ and 
$W^{\sigma,\mu}_{hp}(\Gamma^2)$, respectively, as the parameters $\sigma$ and
$\mu$ only refer to geometric grading of the mesh and the growth of polynomial
degrees on the mesh elements.
\end{remark}
\noindent
As a necessity for our analysis we recall the countably normed spaces 
$\cB^l_{\beta}(\Omega^1)$ and $\cB^l_{\hat\beta}(\Gamma^2)$.
Let $\beta = (\beta_{1},\dots,\beta_{K})$ be an
$K$--tuple of real numbers with $\beta\in (0,1)^K$ 
and define the weight functions
\begin{equation}
\begin{split}
	\Phi^1_{\beta}(x) &= \prod^K_{i=1} 
	r_i(x)^{\beta_{i}},\quad r_i(x) = \min(1, \text{dist}(x,A_i)),\\
	\Phi^2_{\hat\beta}(x) &= \prod^2_{i=1} \hat{r}_i(x)^{\hat\beta_{i}},
	\quad \hat{r}_1(x) = |x| \text{ and } \hat{r}_2(x) = |x - 1|, 
	\hat\beta = (\hat\beta_{1}, \hat\beta_{2})\in(0,1)^2.
\end{split}	
\end{equation}
For integers $m\geq l\geq 0$ define the seminorms 
\begin{equation}
\begin{split}
	|u|^2_{H^{m,l}_{\beta}(\Omega^1)} &:= \sum\limits^m_{k=l} \|\,|D^k u|\,
	\Phi^1_{\beta+k-l}  \|^2_{L^2(\Omega^1)},\\
	|u|^2_{H^{m,l}_{\hat\beta}(I)} &:= \sum\limits^m_{k=l} \| u^{(k)}
	\Phi^2_{\hat\beta+k-l}  \|^2_{L^2(I)},
\end{split}
\end{equation}
where for 
$k\in\NN_0, \Phi^1_{\beta+k}(x) = \prod^K_{i=1} r_i(x)^{\beta_{i} + k}$ and
similarly for $\Phi^2_{\hat\beta + k}(x)$.
Moreover, by $H^{m,l}_{\beta}(\Omega^1)$ and  $H^{m,l}_{\hat\beta}(I),
m\geq l\geq 1,$ we denote the completion of 
$C^\infty(\overline{\Omega^1})$ and $C^{\infty}(\overline I)$
with respect to the norms
\begin{equation}
\begin{split}
		\| u \|^2_{H^{m,l}_{\beta}(\Omega^1)} &= \|u\|^2_{H^{l-1}(\Omega^1)} 
		+ |u|^2_{H^{m,l}_{\beta}(\Omega^1)},\\
		\|u\|^2_{H^{m,l}_{\hat\beta}(I)} &=  \|u\|^2_{H^{l-1}(I)} 
		+ |u|^2_{H^{m,l}_{\hat\beta}(I)}.
\end{split}
\end{equation}
\begin{definition}[Countably normed spaces]
 	Let $l\in\mathbb{N}_0$ and $0\leq\beta,\hat\beta<1$.
 	\begin{enumerate}
 	\item[(i)]{ For an open and bounded polygonal domain 
 	$\Omega^1\subset\mathbb{R}^2$ we define
 	\begin{equation}\label{defBlOmega}
 	\begin{split}
 	 \cB^l_{\beta}(\Omega^1) := \left\{u\in H^{m,l}_{\beta}(\Omega^1), 
 	 \forall m\geq l\geq 0,
	\|\Phi^1_{\beta+k-l}\,|D^k u|\|_{L^2(\Omega^1)} \leq C d^{k-l} (k-l)!,
	\right. \\
	k=l,l+1,...,\, C>0,d\geq 1 \text{ independent of } k\Big\}.
	\end{split}
	\end{equation} 
	with $|D^k u|^2 = \sum_{|\alpha|=k} |D^\alpha u|^2$ for 
	$\alpha\in\mathbb{N}^2_0$. }
	\item[(ii)]{ For $I=[0,1]$ we set
	\begin{equation}\label{defBlI}
	\begin{split}
	 \cB^l_{\hat\beta}(I) := \left\{ u\in H^{m,l}_{\hat\beta}(I), 
	 \forall m\geq l\geq 0,
	\|\Phi^2_{\hat\beta+k-l}\,|u^{(k)}| \|_{L^2(I)} 
	\leq C d^{k-l} (k-l)!,\right.
	 \\
	 k=l,l+1,...,\, C>0,d\geq 1 \text{ independent of } k \Big\}.
	\end{split}
	\end{equation}}
	\end{enumerate}
\end{definition}
\noindent For any $\Gamma^2_j\subset \Gamma^2$ the spaces 
$H^{k,l_j}_{\hat\beta_j}(\Gamma^2_j)$ and $B^{l_j}_{\hat\beta_j}(\Gamma^2_j)$ are
defined using a smooth map from $I$ to $\Gamma_j$ via the spaces 
$H^{k,l_j}_{\hat\beta_j}(I)$ and $B^{l_j}_{\hat\beta_j}(I)$. We then define 
$H^{k,l}_{\hat\beta}(\Gamma^2) := \prod^L_{j=1} H^{k,l_j}_{\hat\beta_j}(\Gamma^2_j)$
and 
$B^{l}_{\hat\beta}(\Gamma^2) := \prod^L_{j=1} B^{l_j}_{\hat\beta_j}(\Gamma^2_j)$
with $l = (l_1,\dots,l_L)$ and 
$\hat\beta = (\hat\beta_1,\dots,\hat\beta_L),\hat\beta_j = (\hat\beta_{j,1},
\hat{\beta_{j,2}})$. We write $\hat\beta_j\geq \tilde\beta_j$, if 
$\hat\beta_{j,k} \geq \tilde\beta_{j,k}, k=1,2,$ and 
$\hat\beta \geq \tilde\beta$ if $\hat\beta_j\geq\tilde\beta_j$, for all j.
Similarly we may write $\hat\beta_j \geq s$ for a real number $s$ if 
$\hat\beta_{j,k}\geq s, k=1,2$.

In general, for a bounded, open, and polygonal domain $\Omega$ with $\Gamma:=\partial \Omega$
the trace space of $H^{k,l}_\beta(\Omega)$ (resp. $\cB^l_\beta(\Omega)$) is
given by the space $H^{k-1/2, l-1/2}_\beta(\Gamma)$ 
(resp. $\cB^{l-1/2}_\beta(\Gamma)$), $k,l$ integral, $k\geq l\geq 0$, i.e. for
any $g\in H^{k-1/2, l-1/2}_\beta(\Gamma)$, (resp. $\cB^{l-1/2}_\beta(\Gamma)$)
there exists $G\in H^{k,l}_\beta(\Omega)$, 
(resp. $\cB^l_\beta(\Omega)$) such that $G|_\Gamma = g$, and
\[
	\|g\|_{H^{k-1/2, l-1/2}_\beta(\Gamma)} = \inf_{G|_\Gamma = g} 
	\|G\|_{H^{k,l}_\beta(\Omega)}.
\]
Generally speaking it is difficult to determine, if the data lie in these trace spaces.
The following result from \cite{BabGuoSteph90} and the references therein show
that the spaces $H^{k,l}_{\hat\beta}(\Gamma)$ 
(resp. $\cB^l_{\hat\beta}(\Gamma)$) characterize the traces of functions in
$H^{k,l}_\beta(\Omega)$ (resp. $\cB^l_\beta(\Omega)$) in a precisely verifiable
manner.
\begin{theorem}[\cite{BabGuoSteph90}]\label{thm:regtrace}
Let $u\in\mathcal{B}^l_\beta(\Omega)$ (resp. $H^{k,l}_\beta(\Omega)$,$\:k\geq l+1$), 
$l=1,2$, then for $1\leq i \leq M, u|_{\Gamma_i}\in 
\cB^{l-1}_{\hat\beta_i}(\Gamma_i)$ (resp. $H^{k,l-1}_{\hat\beta_i}(\Gamma_i)$)
with
\[ 
	\hat\beta_{i,j}\in(\beta_{i+j-1}-1/2,1/2), j=1,2 \text{ if } 
	1/2 <\beta_i,\beta_{i+1} < 1
\]
or $u|_{\Gamma_i}\in \cB^{l}_{\hat\beta_i}(\Gamma_i)$ 
(resp. $H^{k,l}_{\hat\beta_i}(\Gamma_i)$) with
\[
	\hat\beta_{i,j}\in(1/2,\beta_{i+j-1}+1/2), j=1,2 \text{ if } 
	0 <\beta_i,\beta_{i+1} < 1/2.
\]
\end{theorem}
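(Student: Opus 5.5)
This is a trace theorem for weighted spaces, cited from \cite{BabGuoSteph90}. The plan is to localize to a single edge $\Gamma_i$, flatten the geometry near its endpoints, and prove a one-dimensional weighted trace estimate derivative by derivative.

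\textbf{Localization.} Fix an edge $\Gamma_i$ with endpoints $A_i,A_{i+1}$. By a partition of unity it suffices to treat three regions separately: a neighbourhood of $A_i$, a neighbourhood of $A_{i+1}$, and the middle part of the edge. On the middle part all weights are bounded above and below. There the claim reduces to the standard trace theorem applied to derivatives of arbitrary order. For the countably normed case the Cauchy-type bounds $C d^k k!$ are preserved because the trace of $D^k u$ is controlled by $D^k u$ and $D^{k+1}u$ in $L^2$, and $(k+1)!\le C' (d')^k k!$ after enlarging $d$.

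\textbf{Corner region.} Near $A_i$ map the corner to a sector and use polar coordinates $(r,\theta)$, so that $\Gamma_i$ becomes $\{\theta=0\}$. For the tangential derivative $\partial_r^k u(r,0)$ I would apply a one-dimensional trace inequality on the arc $\{|x|=r\}$. This is combined with a dyadic decomposition of the sector into annuli $\{2^{-j-1}<r<2^{-j}\}$, each of which is scaled to unit size. On a scaled annulus the standard trace estimate holds uniformly. Scaling back turns the weight $r^{\beta+k-l}$ in $\Omega$ into $r^{\beta+k-l-1/2}$ on the edge, because of the $r^{1/2}$ loss between area and arc measure. Summing over annuli then gives
\[
\|r^{\beta+k-l-1/2}\,\partial_r^k u\|_{L^2(\Gamma_i)} \lesssim \|\Phi_{\beta+k-l}|D^k u|\| + \|\Phi_{\beta+k+1-l}|D^{k+1}u|\|.
\]

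\textbf{Matching the weight and $l$.} The key case distinction is whether $\hat\beta=\beta-1/2$ with $l-1$, or $\hat\beta=\beta+1/2$ with $l$, is admissible, i.e.\ lies in $(0,1)$.
\begin{itemize}
\item If $\beta>1/2$, the natural exponent $\beta-1/2$ already lies in $(0,1/2)$. We obtain $\cB^{l-1}_{\hat\beta}$ with any $\hat\beta\in(\beta-1/2,1/2)$, since a larger weight exponent is weaker.
\item If $\beta<1/2$, then $\beta-1/2<0$. Instead we write $r^{\beta+k-l-1/2}=r^{(\beta+1/2)+k-(l+1)}$ and obtain $\cB^{l}_{\hat\beta}$-type bounds with $\hat\beta\in(1/2,\beta+1/2)$.
\end{itemize}
In the second case the lowest order $k=l$ term needs a separate argument. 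It must be shown that $u|_{\Gamma_i}\in H^{l-1}$, and in fact in the unweighted lower norm part of $H^{k,l}_{\hat\beta}$. This follows from a weighted Hardy inequality in $r$, which is valid precisely because $\hat\beta<\beta+1/2$ leaves room: the strict inequality gives $r^{\hat\beta-\beta-1/2}$ integrability near $0$.

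\textbf{Main obstacle.} I expect this Hardy/embedding step, together with keeping the constants factorial in $k$ when summing the dyadic pieces, to be the delicate part. I would first try the one-dimensional Hardy inequality $\|r^{\gamma-1}v\|\le C\|r^\gamma v'\|$ for $\gamma\neq 1/2$, with constant independent of $k$.
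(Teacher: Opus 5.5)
The paper does not prove this statement; it is imported from \cite{BabGuoSteph90}, so your plan has to stand on its own. Your overall route is the right kind of argument: localise at the vertices, apply a scaled trace inequality to $w=D^ku$ on dyadic annuli, and sum. However, the central exponent has the wrong sign. On an annulus $A$ of radius $\rho$ one has $\|w\|^2_{L^2(\Gamma_i\cap A)}\lesssim\rho^{-1}\|w\|^2_{L^2(A)}+\rho\|\nabla w\|^2_{L^2(A)}$. Multiplying by $\rho^{2a+1}$ and summing over annuli gives
\[
\|r^{a+1/2}w\|_{L^2(\Gamma_i)}\lesssim\|r^{a}w\|_{L^2(\Omega)}+\|r^{a+1}\nabla w\|_{L^2(\Omega)} .
\]
So the edge exponent goes \emph{up} by $1/2$, because the arc measure $dr$ lacks the factor $r$ in $r\,dr\,d\theta$. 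Your displayed bound with $r^{\beta+k-l-1/2}$ is false. Take $u=\chi r^{\lambda}$, with $\chi$ a cutoff at $A_i$ and $\lambda=l-1-\beta+\epsilon$: its right-hand side is finite, while the left-hand side is $\int_0 x^{-3+2\epsilon}\,dx=\infty$. With $a=\beta+k-l$, the correct exponent is
\[
\beta+\tfrac12+k-l=(\beta-\tfrac12)+k-(l-1)=(\beta+\tfrac12)+k-l ,
\]
so $\mathcal{B}^{l-1}_{\beta-1/2}$ and $\mathcal{B}^{l}_{\beta+1/2}$ follow by pure re-indexing. From your exponent, by contrast, the first case would give $\hat\beta=\beta-3/2$. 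Your rewriting $r^{(\beta+1/2)+k-(l+1)}$ corresponds, with the weights $\Phi^2_{\hat\beta+k-l}$, to $\mathcal{B}^{l+1}$ rather than $\mathcal{B}^l$. You land on the right indices only because these slips are not tracked.

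The genuine gap is the second case. With the sign fixed, the natural exponent is $\hat\beta=\beta+1/2$. Since a larger exponent is weaker (as you correctly use in the first case), you get every $\hat\beta\in[\beta+1/2,1)$. The interval $(1/2,\beta+1/2)$ you aim for lies on the \emph{stronger} side, and no Hardy inequality can lower the weight on the top-order derivatives. The factor $r^{\hat\beta-\beta-1/2}$ is unbounded at the vertex, so strictness removes room instead of creating it. Concretely, for $0<\epsilon<\beta<1/2$ the function $u=\chi r^{1-\beta+\epsilon}$ lies in $\mathcal{B}^2_\beta(\Omega)$. Yet $\|x^{\hat\beta}(x^{1-\beta+\epsilon})''\|_{L^2(0,\delta)}=\infty$ whenever $\hat\beta\le\beta+1/2-\epsilon$, and $\chi r^{1-\beta}|\log r|^{-1}$ defeats every $\hat\beta<\beta+1/2$ at once. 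So this step cannot be carried out. As reproduced here, the second interval lies on the wrong side of $\beta+1/2$; it should read $\hat\beta\ge\beta+1/2$, presumably a transcription slip.

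Two smaller corrections. In the second case, the unweighted $H^{l-1}(\Gamma_i)$ part needs only $\hat\beta<1$, via a one-dimensional Hardy inequality anchored at the regular endpoint, not $\hat\beta<\beta+1/2$. In the first case, the lowest term $k=l-1$ requires $\|r^{\beta-1}D^{l-1}u\|_{L^2(\Omega)}$. This is not part of the $\mathcal{B}^l_\beta$ norm and must be supplied by a two-dimensional Hardy inequality, which is valid since $\beta>0$. Finally, the factorial bounds are not delicate: each order $k$ uses only $D^ku$ and $D^{k+1}u$, with $k$-independent constants.
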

\begin{remark}
Note the shift of the exponents $\hat{\beta}_{i,j}$ of the vertex weights in
dependence of $l$.
\end{remark}
\noindent
Alleviating notation we write $\cB^{l,l+1}_{\hat\beta}(\Gamma)$ to denote
the space of all functions whose restriction on $\Gamma_i$ belongs
to $\cB^l_{\hat\beta_i}(\Gamma_i), 0< \hat\beta_i< 1/2$ or 
$\cB^{l+1}_{\hat\beta_i}(\Gamma_i), 1/2 < \hat\beta_i < 1$, 
cf. also \cite{BabGuoSteph90}.

In the upcoming weak formulation we use a discretisation of the Steklov-Poincar\'e 
operator  $\cS$ (cf. \eqref{NSdef}), including $\cV^{-1}$, which cannot be
computed exactly. Therefore, we define an approximation 
\begin{equation}\label{Vm1hpdef}
\tilde\cV^{-1}_{hp} := i_{hp}(i^\prime_{hp}\cV i_{hp})^{-1} i^\prime_{hp},
\end{equation} 
where $i_{hp}: W_{hp}(\Gamma^2)\hookrightarrow H^{-1/2}(\Gamma^2)$ is the
canonical embedding, given as the natural injection, and its dual
$i^\prime_{hp}$, and write
\eqspl{hatSdef}
{
	\hat{\cS} := \cW + (\cK^\prime + 1/2) \tilde\cV^{-1}_{hp} (\cK + 1/2).
}
Properties of the boundary integral operators
involved then also yield continuity and coercivity of 
$\hat{\cS}$ on $V_{hp}(\Gamma^2)$ (the proof for variable $p$ is similar to
\cite[Lemma 12.11]{Stb08}):
\alil{coel_hatS}
{
	\exists c_{\hat{\cS}}>0:\dualp{\hat{\cS} u}{u}_{\Gamma^2} &
	\geq c_{\hat{\cS}} \norm{u}{H^{1/2}(\Gamma^2)}^2, &\forall& 
	u\in V_{hp}(\Gamma^2),\\
	\exists C_{\hat{\cS}}>0:\dualp{\hat{\cS} u}{v}_{\Gamma^2} &
	\leq C_{\hat{\cS}} \norm{u}{H^{1/2}(\Gamma^2)}^2 
	\norm{v}{H^{1/2}(\Gamma^2)}^2,
	&\forall& u,v\in V_{hp}(\Gamma^2).
}
Analogously an approximation is used for the Newton potential $\cN$ and write it as
\eqspl{hatNdef}
{
	\hat{\cN} := (\cK^\prime + 1/2)\tilde\cV^{-1}_{hp}\cN_0 - \cN_1.
}

Note, that the functions in the discrete space $\spaceFBEM$ 
(resp. $\spaceFBEMsig$) are in general discontinuous over $\Gamma_I$ and have
discontinuous normal derivatives. Moreover, continuity in general in the
strong sense cannot be imposed in case of non-matching meshes $\meshIone \not
\equiv \meshItwo$.
In this paper we construct a $hp$-Nitsche method, imposing continuity of the
solution $u$ in the weak sense. \\

\subsection{Relations to earlier work}

The $h$-version of Nitsche's finite element method was introduced and studied 
by Becker, Hansbo and Stenberg \cite{BeHaSg03}, with further
developments, e.g., by Hansbo, Hansbo and Larson \cite{HaHaLa03}, and
by Heinrich and co-workers \cite{HeJu06,HeNi03}. It can be treated as a 
mesh-dependent penalty method with additional terms, which, in contrast
to the original internal penalty methods (as proposed in \cite{Babu70}, with
recent contributions by Lazarov, Tomov and Vassilevski \cite{LazToVas01}), 
provide consistency of the coupling. In the context of BE, Nitsche's method has been analysed
for general boundary conditions by Betcke, Burman, and Scroggs \cite{betcke2019boundary} and
for FE/BE coupling by Betcke, Bosy, and Burman \cite{betcke2022} using a hybridised approach.

Stability of Nitsche's method is provided
by the stabilization function $\eta$, which should grow sufficiently fast 
with the mesh parameters.
A key point of the present analysis is that stability can be ensured with a stabilisation
parameter that depends locally on the mesh size and polynomial degree without assuming 
quasi-uniform meshes.
More precisely, we show that $\eta = \eta_0 \invrate$ is the optimal
choice, where $\invrate$ is the stability constant in the 
\textit{local} inverse inequality (\ref{InvRateConst}) and $\eta_0$ does not
depend on the discretisation parameters. This result is similar 
to one from Discontinuous Galerkin methods, cf. \cite{SchoeWih03}. Due to
consistency of the method there is no need to take larger
values of $\eta_0$ than required by stability. The penalty method, in contrast,
requires the penalty parameter to be of order $O(h^{-k})$, with $k\geq 1$
increasing with the polynomial degree, in order to control the consistency
error, cf. \cite{Babu70}. Using $k>1$, however, is detrimental to the
conditioning of the resulting discrete system.

We show in Theorem
\ref{thm:ba_bound} that stability and quasi-optimal convergence is achieved if 
$\eta_0 = (\alpha+2)\kappa$, where $\kappa$ is the diffusion coefficient and 
$\alpha>0$ is arbitrary. To our knowledge, we give the first sharp and local
estimate for $\eta$ which is not based on quasi-uniformity assumptions.
Furthermore, we give the first convergence analysis of Nitsche's method explicit
in \textit{local} polynomial degree, cf. Theorem \ref{thm:apriori_sharp} for
the a priori error estimate. The established error estimate is optimal in the
mesh size $h$ and suboptimal in the polynomial degree $p$ by the term $p^{1/2}$,
which is standard in the context of Discontinuous Galerkin methods, 
cf. \cite{HouSwbSueli02}.\\

An alternative method is the mortar method 
(see, e.g., Seshaiyer and Suri \cite{SeSu00} for $hp$-FEM). In the context of
the mortar method, the weak continuity of $u$ is enforced with the help of a
Lagrange multiplier $\lambda$, which leads to a saddle point formulation. It is
well known (see e.g. Ben Belgacem \cite{BBg99}, Wohlmuth \cite{Woh01}) that not
every discretisation of $u$ and $\lambda$ leads to a stable method. The 
{Babu\v{s}ka-Brezzi condition} is the crucial inequality which guarantees the
stable discretisation for the mortar method. 
Nitsche's method, on the other hand, leads to a positive definite system of
algebraic equations and is therefore always stable for $\eta_0$ large enough.
For a more detailed comparison between the mortar and Nitsche methods, see
Fritz, H\"ueber, and Wohlmuth \cite{FriHueWoh04}.\\

We use a symmetric boundary element formulation with the Steklov-Poincar\'e
operator $\cS$ (see e.g. Carstensen and Stephan \cite{CarSt95}) in the BE
subdomain. Since the operators $\cS$ and $\cN$ cannot be discretised directly, 
their approximations $\hat \cS$ and $\hat \cN$ are used, which yields a
consistency error, cf. Theorem \ref{thm:ConsErr}. This consistency 
error can be bounded by the approximation error of the discrete (traction)
boundary element space, which is optimal and does not disturb the 
convergence rate of the methods.

\section{The $hp$--Nitsche method}
In this section we introduce two formulations (\ref{weak_c}) and (\ref{weak_t})
of the $hp$--Nitsche method and discuss their basic properties. 
Identical over the discrete space $\spaceFBEM$, they are in general not
equivalent over $\fspace$. The formulation (\ref{weak_c}) 
is more convenient for practical implementation and (\ref{weak_t}) is the one
used in the proofs.

\subsection{Formulations}
Consider the following weak formulation:
find $U = (U^1,U^2) \in \spaceFBEM$, such that
\begin{equation} \label{weak_c}
\bilinc(U,\Phi)=l_{hp}(\Phi) \qquad \forall \Phi \in \spaceFBEM,
\end{equation}
where

\begin{align}
\bilinc(U,\Phi) := & (\kappa \nabla U^1, \nabla \Phi^1)_{\Omega^1} + \big<
\hat\cS U^2, \Phi^2 \big>_{\Gamma^2} \nonumber \\[2ex]
&- \big< q_\kappa(U),[\Phi] \big>_{\Gamma_I}
- \big<[U],q_\kappa(\Phi) \big>_{\Gamma_I}
+ \big< \eta[U], [\Phi]  \big>_{\Gamma_I},\label{bilincomp}\\[2ex]
l_{hp}(\Phi) := & (f, \Phi^1)_{\Omega^1} 
	+ \big<\hat \cN f, \Phi^2\big>_{\Gamma^2} 
	+ \big< g, \Phi \big>_{\Gamma_N},\label{linform}
\end{align}
where the stabilization function $\eta \in L^\infty(\Gamma_I)$ will be 
specified later. 
For the normal flux on the coupling interface we choose, as in \cite{HaHaLa03},
the one sided approximation from the FE subdomain
\[
q_\kappa(\Phi) := \kappa \nabla \Phi^1 \cdot \bn^1.
\]
Note that any convex combination 
$q_\kappa(\Phi) := \alpha \kappa \nabla \Phi^1 \cdot \bn^1 
+ (1-\alpha)\hat S \Phi^2$ might be used as well. The case  $\alpha \neq 1$ is,
however, less convenient from the practical point of view, since the operator 
$\hat \cS$ gives rise to additional dense blocks in the system matrix. 

Here $\hat \cS$ and $\hat \cN$ are the discrete Steklov-Poincar\'e operator
and the discrete Newton potential defined in (\ref{hatSdef}) and(\ref{hatNdef}),
respectively. This approximation introduces a consistency error, which is
standard for symmetric BEM \cite{Stb08}, cf. Theorem \ref{thm:ConsErr} below.\\

It turns out that (\ref{weak_c}) is not well suited for the error analysis of
the $hp$--Nitsche method, although it is appropriate for the error analysis of
the low $h$-version of Nitsche's method (i.e. with fixed polynomial degree $p$)
as in \cite{BeHaSg03,HaHaLa03}. The reason for this lies in the continuity
properties of $q_\kappa(\cdot)$ requiring analysis of the best approximation
error in the $H^2(\Omega^1)$-norm, which is not optimal in $\R^2$, 
see \cite{CsbDgeSwb05}.\\

Consider the lifting operator
$\lifting:V \to \spaceFEMgradstrip$ such that 
\begin{equation} \label{defLifting}
(\lifting(v),\bPsi)_{\Gstrip} = \big<[v],\bPsi \cdot \bn^1\big>_{\Gamma_I}
\qquad
\forall \bPsi \in \spaceFEMgrad.
\end{equation}
Note that for an element $K$ with $K\cap\Gamma_I=\emptyset$, i.e. an element not
adjacent to the interface, we have $\lifting(v)|_K=0$.
We define the bilinear form
\begin{equation} \label{bilintheo}
\begin{array}{rcl}
\bilint(U,\Phi) 
&:=& (\kappa \nabla U^1, \nabla \Phi^1)_{\Omega^1} 
    + \big<\hat \cS U^2, \Phi^2 \big>_{\Gamma^2} \\[2ex]
&& -(\kappa \nabla U^1,\lifting(\Phi))_{\Gstrip}
   -(\lifting(U),\kappa \nabla \Phi^1)_{\Gstrip} 
   + \big< \eta[U], [\Phi]  \big>_{\Gamma_I}.
\end{array}
\end{equation}
Note that $\bilinc(\Phi,\Psi) \equiv \bilint(\Phi,\Psi)$ for arbitrary 
$\Phi,\Psi \in \spaceFBEM$, since for $\Phi^1 \in \spaceFEM$ there holds 
$\nabla \Phi^1 \in \spaceFEMgrad$ and 
\[
\big<[\Psi], q_\kappa(\Phi) \big>_{\Gamma_I}
= \big<[\Psi],\kappa \nabla \Phi^1 \cdot \bn^1\big>_{\Gamma_I} 
= (\lifting(\Psi), \kappa\nabla \Phi^1)_{\Gstrip}.
\]
Thus, the discrete formulation (\ref{weak_c}) is equivalent to the problem of
finding $U \in \spaceFBEM$ such that
\begin{equation} \label{weak_t}
\bilint(U,\Phi) = l_{hp}(\Phi) \qquad \forall \Phi \in \spaceFBEM.
\end{equation}

\begin{remark}
Formulation (\ref{weak_c}) is more convenient for computations, since it does
not involve the lifting operator $\lifting$. On the other hand, formulation
(\ref{weak_t}) is better suited for the error analysis and will be used in the
forthcoming proofs.
\end{remark}

Suppose $\eta \in L^\infty(\Gamma_I)$ is a fixed piecewise constant function on
$\meshIone$
\[
0<\eta|_J = \eta_J<\eta_{\max},\qquad \forall J\in \meshIone.
\]
The following functional framework (cf. e.g. \cite{HaHaLa03,SchoeWih03}) is
required for the forthcoming error analysis.

Consider a $\eta$-dependent space 
$H^{1/2}_\eta(\Gamma_I)\subseteq L^2(\Gamma_I)$
endowed with the norm
\begin{equation} \label{def1/2norm} 
\|\phi\|_{\normp{\eta}} := \|\eta^{1/2}\phi\|_{L^2(\Gamma_I)} 
       = \left( \sum \limits_{J \in \meshIone}  \eta_J \|\phi\|^2_{L^2(J)}
       \right)^{1/2}
\end{equation}
and the space $H^{-1/2}_\eta(\Gamma_I)$ with the norm
\begin{equation} \label{def-1/2norm} 
\|\psi\|_{\normm{\eta}} =
\|\eta^{-1/2}\psi\|_{L^2(\Gamma_I)}
= \left( \sum \limits_{J \in \meshIone} \eta^{-1}_J \|\psi\|^2_{L^2(J)}
\right)^{1/2}.
\end{equation}
Note that the Cauchy-Schwarz inequality yields
\begin{equation}\label{eq:weightedCS}
\big<\phi, \psi \big>_{\Gamma_I} 
\leq \|\phi\|_{\normp{\eta}}\|\psi\|_{\normm{\eta}}.
\end{equation}

In what follows we analyse stability and convergence of the proposed 
$hp$-Nitsche method (\ref{weak_c}) in the following mesh-dependent norm, 
cf. also Remark \ref{rem:fullnorm},
\begin{align*}
\|\phi\|^2_\norme{\eta}
:= (\kappa \nabla \phi^1, \nabla \phi^1)_{\Omega^1}
+ \big<\hat \cS \phi^2, \phi^2 \big>_{\Gamma^2}
+ \|[\phi]\|^2_{\normp{\eta}} .
\end{align*}

Later on (in Theorem \ref{thm:ba_bound}) we will define the weight function 
$\eta = \eta(h,p)$ in a natural way to ensure stability and convergence.
Moreover, for $s> 0$ we define the spaces of piecewise functions 
$H^{s}_{p.w.}(\Gamma^2) := \{v \in L^2(\Gamma^2): v|_J \in H^s(J), \forall J 
\in \meshBEM\}$ endowed with the broken norm
\[
\|\phi\|_{H^{s}_{p.w.}(\Gamma^2)}^2
:= \sum \limits_{J \in \meshBEM} \|\phi\|_{H^s(J)}^2.
\]

\subsection{Consistency of the modified discrete formulation} 
Let us introduce for arbitrary $v \in \fspace$, $\Phi \in \spaceFBEM$ and  
$\eta \in L^\infty(\Gamma_I)$, $\eta>0$ the residual
\begin{align} 
\res{\eta}(v) := \sup_{0 \neq \Phi \in \spaceFBEM}
\dfrac{\resphi(v,\Phi)}{\|\Phi\|_\norme{\eta}}, \quad
\mbox{ where }
\resphi(v,\Phi) := \bilint(v,\Phi) - l_{hp}(\Phi).
\label{def_res}
\end{align}
The following theorem allows to control the residual of the exact solution
$u \in \fspace$. 
\begin{theorem}[Representation of Residual] 
\label{thm:ConsErr}
Let $u \in \fspace$ be the weak solution of (\ref{p2}) and
let $\bPi_{hp}:[L^2(\Gstrip)]^2 \to \spaceFEMgradstrip$ be the $L^2$-projection.
Then
\begin{equation}\label{eq:resrep}
\resphi(u,\Phi)
= \big< \kappa (\nabla u^1 - \bPi_{hp}( \nabla u^1))\cdot \bn^1,[\Phi]
\big>_{\Gamma_I}
- \big<(\cS - \hat \cS) u^2 - (\cN - \hat \cN) f^2, \Phi^2 \big>_{\Gamma^2}.
\end{equation}
\end{theorem}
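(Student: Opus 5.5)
The plan is to expand $\resphi(u,\Phi)=\bilint(u,\Phi)-l_{hp}(\Phi)$ term by term, use that the exact solution has no jump ($[u]=0$ on $\Gamma_I$), and then compare with the exact variational identities satisfied by $u$ in $\Omega^1$ (Green's formula) and in $\Omega^2$ (the boundary reduction \eqref{Green}). Since $[u]=0$, the definition \eqref{defLifting} gives $\lifting(u)=0$. Hence the terms $-(\lifting(u),\kappa\nabla\Phi^1)_{\Gstrip}$ and $\langle\eta[u],[\Phi]\rangle_{\Gamma_I}$ vanish. We are left with
\[
\resphi(u,\Phi)=(\kappa\nabla u^1,\nabla\Phi^1)_{\Omega^1}+\langle\hat\cS u^2,\Phi^2\rangle_{\Gamma^2}-(\kappa\nabla u^1,\lifting(\Phi))_{\Gstrip}-l_{hp}(\Phi).
\]

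Next we treat the lifting term. Because $\lifting(\Phi)\in\spaceFEMgrad$ and $\bPi_{hp}$ is the $L^2$-projection onto this space, we may replace $\kappa\nabla u^1$ by its projection. Since $\kappa$ is piecewise constant, $\bPi_{hp}(\kappa\nabla u^1)=\kappa\bPi_{hp}(\nabla u^1)$, because the projection acts elementwise. Then \eqref{defLifting} with $\bPsi=\kappa\bPi_{hp}(\nabla u^1)$ gives
\[
(\kappa\nabla u^1,\lifting(\Phi))_{\Gstrip}=\langle\kappa\bPi_{hp}(\nabla u^1)\cdot\bn^1,[\Phi]\rangle_{\Gamma_I}.
\]
On the other hand, integrating by parts in $\Omega^1$ and using $-\divv(\kappa\nabla u)=f$ and $\Phi^1|_{\Gamma_D^1}=0$ yields
\[
(\kappa\nabla u^1,\nabla\Phi^1)_{\Omega^1}=(f,\Phi^1)_{\Omega^1}+\langle g,\Phi^1\rangle_{\Gamma_N^1}+\langle\kappa\nabla u^1\cdot\bn^1,\Phi^1\rangle_{\Gamma_I}.
\]

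For $\Omega^2$, we use \eqref{Green} read through the Calder\'on system \eqref{eq:intCalderon}: the conormal derivative satisfies $\p u^2/\p\bn^2=\cS u^2-\cN f$ on $\Gamma^2$. Since $\Phi^2$ is supported in $\Gamma^2_F=\Gamma^2_N\cup\Gamma_I$, this gives
\[
\langle\cS u^2-\cN f,\Phi^2\rangle_{\Gamma^2}=\langle g,\Phi^2\rangle_{\Gamma^2_N}+\langle\nabla u^2\cdot\bn^2,\Phi^2\rangle_{\Gamma_I}.
\]
We write $\langle\hat\cS u^2,\Phi^2\rangle=\langle\cS u^2,\Phi^2\rangle-\langle(\cS-\hat\cS)u^2,\Phi^2\rangle$ and likewise $\hat\cN f=\cN f-(\cN-\hat\cN)f$. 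The flux transmission condition $\kappa\nabla u^1\cdot\bn^1=-\nabla u^2\cdot\bn^2$ on $\Gamma_I$ then combines the interface terms into $\langle\kappa\nabla u^1\cdot\bn^1,\Phi^1-\Phi^2\rangle_{\Gamma_I}=\langle\kappa\nabla u^1\cdot\bn^1,[\Phi]\rangle_{\Gamma_I}$. The $f$ and $g$ terms cancel exactly against $l_{hp}(\Phi)$. Subtracting the projected lifting term then produces \eqref{eq:resrep}.

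The main obstacle is regularity, namely giving meaning to the traces $\nabla u^1\cdot\bn^1$ on $\Gamma_I$ and to the Neumann data. I would assume (as the paper does in stating the equivalence of \eqref{p1} and \eqref{p2}) that $u$ is smooth enough near $\Gamma_I$, e.g.\ $\nabla u^1\cdot\bn^1\in L^2(\Gamma_I)$, so that all pairings are $L^2$ integrals. Otherwise the pairings are understood as $H^{-1/2}$–$H^{1/2}$ dualities with $[\Phi]$ interpreted suitably.

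A second point to check is the identity $\bPi_{hp}(\kappa\cdot)=\kappa\bPi_{hp}(\cdot)$. It holds because $\kappa$ is constant on each $K$ and $\spaceFEMgrad$ is fully discontinuous, so the projection decouples elementwise.
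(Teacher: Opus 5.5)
Your proposal is correct and follows essentially the same route as the paper. Both arguments drop the $\lifting(u)$ and $\eta[u]$ terms using $[u]=0$, integrate by parts in $\Omega^1$, invoke \eqref{Green} for $\Omega^2$, merge the interface fluxes via the transmission condition, and replace $\kappa\nabla u^1$ by $\kappa\bPi_{hp}(\nabla u^1)$ in the lifting term. The differences are cosmetic: you use the boundary identity $\p u^2/\p\bn^2=\cS u^2-\cN f$ directly, which avoids the tacit $H^1(\Omega^2)$-extension of $\Phi^2$ in the paper's volume form, and you state the elementwise commutation $\bPi_{hp}(\kappa\,\cdot)=\kappa\bPi_{hp}(\cdot)$, which the paper uses without comment.
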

\begin{proof}
For $u\in \fspace$ solving (\ref{p2}) there holds
\begin{equation} \label{jumps}
[u]=0, \qquad
q_\kappa(u) = \kappa \nabla u^1 \cdot \bn^1 = -\nabla u^2 \cdot \bn^2 
\qquad \mbox{ a.e. on } \Gamma_I.
\end{equation}
Therefore for $\forall \Phi \in \spaceFBEM$
\begin{equation*}
\begin{array}{rcl}
\bilint(u,\Phi) &=& (\kappa \nabla u^1, \nabla \Phi^1)_{\Omega^1} 
+ \big<\hat \cS u^2, \Phi^2 \big>_{\Gamma^2} \\[2ex]
&&- ( \kappa \nabla u^1,\lifting(\Phi) )_{\Gstrip}
  - (\lifting(u),\kappa \nabla \Phi^1 )_{\Gstrip}
+ \big< \eta[u], [\Phi]  \big>_{\Gamma_I} \\[2ex]
&=& (\kappa\nabla u^1, \nabla \Phi^1)_{\Omega^1} 
+ \big<\hat \cS u^2, \Phi^2 \big>_{\Gamma^2}
- ( \kappa \nabla u^1,\lifting(\Phi) )_{\Gstrip}
\end{array}
\end{equation*}
Recall Green's formula (\ref{Green})
\[
(\nabla u^2, \nabla \Phi^2)_{\Omega^2} - (f, \Phi^2)_{\Omega^2} =
\big<\cS u^2 - \cN f, \Phi^2\big>_{\Gamma^2},
\qquad \forall \Phi^2 \in H^1(\Omega^2).
\]
Hence, with 
partial integration in $\Omega^1$ and $\Omega^2$, respectively, we obtain
\begin{align*}
\resphi(u,\Phi)=&\bilint(u,\Phi) - l_{hp}(\Phi)\\[2ex]
=&(\kappa \nabla u^1, \nabla \Phi^1)_{\Omega^1} 
+(\nabla u^2, \nabla \Phi^2)_{\Omega^2} 
- (f, \Phi)_{\Omega}
- \big< g, \Phi \big>_{\Gamma_N}\\[2ex]
&
- (\kappa \nabla u^1,\lifting(\Phi))_{\Gstrip}
- \big<(\cS - \hat \cS) u^2, \Phi^2 \big>_{\Gamma^2}
+ \big<(\cN - \hat \cN) f^2, \Phi^2 \big>_{\Gamma^2}\\[2ex]
=& 
 \big<\kappa \nabla u^1 \cdot \bn^1 , [\Phi] \big>_{\Gamma_I}
- ( \kappa \nabla u^1,\lifting(\Phi))_{\Gstrip}\\[2ex]
&- \big<(\cS - \hat \cS) u^2, \Phi^2 \big>_{\Gamma^2}
 + \big<(\cN - \hat \cN) f^2, \Phi^2 \big>_{\Gamma^2}
\end{align*}
since $u \in \fspace$ is a weak solution of (\ref{p2}) and (\ref{jumps}) holds
true. Recall that $\bPi_{hp}:\big[L^2(\Gstrip)\big]^2 \to \spaceFEMgradstrip$
is the $L^2$-projection. Then
\[
( \kappa \nabla u^1,\lifting(\Phi))_{\Gstrip}
= ( \kappa \,\bPi_{hp}( \nabla u^1),\lifting(\Phi))_{\Gstrip}
= \big< \kappa\,\bPi_{hp}( \nabla u^1)\cdot \bn^1,[\Phi]\big>_{\Gamma_I},
\]
since $\lifting(\Phi) \in \spaceFEMgradstrip$,
yielding
\begin{equation*}
\resphi(u,\Phi)
= \big< \kappa (\nabla u^1 - \bPi_{hp}( \nabla u^1))\cdot \bn^1,
[\Phi]\big>_{\Gamma_I}
- \big<(\cS - \hat \cS) u^2 - (\cN - \hat \cN) f^2, \Phi^2 \big>_{\Gamma^2}.
\end{equation*}
\end{proof}
\begin{corollary}[Consistency error]\label{cor:conserr}
Let $u \in \fspace$ be the weak solution of (\ref{p2}) and
let $\bPi_{hp}:[L^2(\Gstrip)]^2 \to \spaceFEMgradstrip$ be the $L^2$-projection.
Then
\begin{equation} \label{res_bound}
\begin{array}{l}
\res{\eta}(u)
 \leq 
\|\kappa(\nabla u^1 - \bPi_{hp}(\nabla u^1))\|_\normm{\eta}
+ c_{\hat \cS}^{-1/2}
\|(\cS - \hat \cS) u^2 - (\cN - \hat \cN) f^2\|_{H^{-1/2}(\Gamma^2)}
\end{array}
\end{equation}
where $c_{\hat \cS}$ is the coercivity constant of $\hat \cS$, 
cf. (\ref{coel_hatS}).
\end{corollary}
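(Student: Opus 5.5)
The bound follows from the residual representation in Theorem \ref{thm:ConsErr}: estimate each of the two terms in \eqref{eq:resrep} by a dual norm times a quantity dominated by $\|\Phi\|_\norme{\eta}$, then divide by $\|\Phi\|_\norme{\eta}$ and take the supremum over $0\neq\Phi\in\spaceFBEM$ as in \eqref{def_res}. The interface term is read as the trace on $\Gamma_I$ of the volume function $\kappa(\nabla u^1-\bPi_{hp}(\nabla u^1))\cdot\bn^1$, measured in the weighted norm \eqref{def-1/2norm}.

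For the interface term I apply the weighted Cauchy--Schwarz inequality \eqref{eq:weightedCS} with $\psi=\kappa(\nabla u^1-\bPi_{hp}(\nabla u^1))\cdot\bn^1$ and $\phi=[\Phi]$. Since $|\bn^1|=1$, this gives
\[
\big<\psi,[\Phi]\big>_{\Gamma_I}\le \|\kappa(\nabla u^1-\bPi_{hp}(\nabla u^1))\|_\normm{\eta}\,\|[\Phi]\|_\normp{\eta}.
\]
By the definition of the mesh-dependent norm, $\|[\Phi]\|_\normp{\eta}\le\|\Phi\|_\norme{\eta}$.

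For the boundary element term I use the $H^{-1/2}(\Gamma^2)$--$\tilde H^{1/2}$ duality. Since $\Phi^2\in V_{hp}(\Gamma^2)\subset\tilde H^{1/2}(\Gamma^2_F)$, the pairing is bounded by
\[
\|(\cS-\hat\cS)u^2-(\cN-\hat\cN)f^2\|_{H^{-1/2}(\Gamma^2)}\,\|\Phi^2\|_{H^{1/2}(\Gamma^2)}.
\]
Coercivity \eqref{coel_hatS} of $\hat\cS$ on $V_{hp}(\Gamma^2)$ then gives
\[
\|\Phi^2\|_{H^{1/2}(\Gamma^2)}\le c_{\hat\cS}^{-1/2}\big<\hat\cS\Phi^2,\Phi^2\big>_{\Gamma^2}^{1/2}\le c_{\hat\cS}^{-1/2}\|\Phi\|_\norme{\eta}.
\]
Adding the two estimates yields \eqref{res_bound}.

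The main subtlety is justifying the duality pairing on $\Gamma^2$. One must check that $\hat\cS u^2$ and $\hat\cN f$ are well defined for the exact $u^2$ and $f$, not only on discrete functions. This holds because $\tilde\cV^{-1}_{hp}$ is defined on all of $H^{-1/2}(\Gamma^2)$ via \eqref{Vm1hpdef} and maps into $W_{hp}(\Gamma^2)$, so the residual functional lies in $H^{-1/2}(\Gamma^2)$. The norms on the two sides of the pairing must also be taken consistently: the $H^{1/2}(\Gamma^2)$ norm of $\Phi^2$ is the one appearing in \eqref{coel_hatS}. No constant beyond $c_{\hat\cS}^{-1/2}$ arises.
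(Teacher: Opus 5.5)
Your proof is correct and follows the paper's argument exactly. It applies the weighted Cauchy--Schwarz inequality \eqref{eq:weightedCS} to the interface term of \eqref{eq:resrep}, and on the boundary term combines $H^{-1/2}$--$H^{1/2}$ duality with the bound $\|\Phi^2\|_{H^{1/2}(\Gamma^2)}\le c_{\hat\cS}^{-1/2}\|\Phi\|_\norme{\eta}$ from \eqref{coel_hatS}. One small slip in your side remark: $\tilde\cV^{-1}_{hp}$, like $\cV^{-1}$, acts on $H^{1/2}(\Gamma^2)$ (because $i'_{hp}$ is defined on the dual of $H^{-1/2}(\Gamma^2)$), not on $H^{-1/2}(\Gamma^2)$, and it maps into $W_{hp}(\Gamma^2)\subset H^{-1/2}(\Gamma^2)$; this does not affect the argument.
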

\begin{proof}
Estimate (\ref{res_bound}) follows from \eqref{eq:resrep} by Cauchy-Schwarz
inequality \eqref{eq:weightedCS} and the estimate
\[
\|\Phi^2\|_{H^{1/2}(\Gamma^2)} \leq c_{\hat{\cS}}^{-1/2} \|\Phi\|_\norme{\eta}.
\]
\end{proof}

\begin{remark}
We note that in order to control the first term in \eqref{eq:resrep} it is
sufficient in the quasi-uniform case that 
$u^1\in H^{3/2+\varepsilon}(\Omega^1)$. In the case that 
$u\in\cB^2_{\beta}(\Omega^1)$ (resp. $H^{2,2}_{\beta}(\Omega^1)$) the first
term in \eqref{eq:resrep} cannot be bounded directly on elements 
$K\in\frS_0, K\cap\Gamma_I\neq \emptyset$ using the Cauchy-Schwarz inequality. 
A slightly more involved analysis is neccessary, since 
$\nabla u^1 \in [\cB^1_{\beta}(\Omega^1)]^2$ 
(resp. $[H^{1,1}_{\beta}(\Omega^1)]^2$), 
cf. Theorem \ref{thm:apriori_exp} below.
\end{remark}

The following Lemma provides a useful estimate of the
consistency error caused by the approximation of boundary integral operators,
which constitutes the second term in \eqref{eq:resrep}.
\begin{lemma}\label{lem:stekcons}
Let $u=(u^1,u^2)\in V$
solve \eqref{p2} and $f=(f^1,f^2)$ be a sufficiently regular right-hand side
datum. Then there holds
\begin{equation}\label{eq:steklovres}
	\|(\cS-\hat\cS)u^2 - (\cN - \hat\cN) f^2\|_{H^{-1/2}(\Gamma^2)} \leq 
	C \inf_{\psi_{hp}\in W_{hp}(\Gamma^2)} 
	\|\tfrac{\p u^2}{\p \bn^2}-\psi_{hp}\|_{H^{-1/2}(\Gamma^2)}.
\end{equation}
\end{lemma}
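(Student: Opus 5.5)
The plan is to rewrite the difference of the exact and approximate Steklov--Poincar\'e operators and Newton potentials as a single operator acting on the Neumann datum. From the first row of the Calder\'on system \eqref{eq:intCalderon},
\[
\cV \tfrac{\p u^2}{\p \bn^2} = (\tfrac12+\cK)u^2 - \cN_0 f ,
\]
so that $\sigma := \tfrac{\p u^2}{\p \bn^2} = \cV^{-1}\big((\tfrac12+\cK)u^2 - \cN_0 f\big)$. Substituting the definitions \eqref{NSdef}, \eqref{hatSdef} and \eqref{hatNdef}, the terms containing $\cW$ and $\cN_1$ cancel. Writing $w := (\cK+1/2)u^2 - \cN_0 f = \cV\sigma$, we obtain
\[
(\cS-\hat\cS)u^2 - (\cN-\hat\cN)f^2 = (\cK'+\tfrac12)\big(\cV^{-1} - \tilde\cV^{-1}_{hp}\big) w = (\cK'+\tfrac12)\big(\sigma - \sigma_{hp}\big),
\]
where $\sigma_{hp} := \tilde\cV^{-1}_{hp}\cV\sigma \in W_{hp}(\Gamma^2)$.

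The key observation is that, by the definition \eqref{Vm1hpdef}, $\sigma_{hp}$ is exactly the Galerkin approximation of $\sigma$ in $W_{hp}(\Gamma^2)$ for the single layer operator. That is, $\sigma_{hp}$ is the unique element of $W_{hp}(\Gamma^2)$ with
\[
\dualp{\cV\sigma_{hp}}{\psi}_{\Gamma^2} = \dualp{\cV\sigma}{\psi}_{\Gamma^2} \qquad \forall \psi \in W_{hp}(\Gamma^2).
\]
This identity holds because $i'_{hp}\cV i_{hp}\,\sigma_{hp} = i'_{hp} w$.

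With this in hand, the estimate follows in two steps.
\begin{itemize}
\item The operator $\cK'+1/2$ is continuous from $H^{-1/2}(\Gamma^2)$ to $H^{-1/2}(\Gamma^2)$ (standard mapping properties, cf. \cite{Stb08}). Hence the left-hand side of \eqref{eq:steklovres} is bounded by $C\|\sigma-\sigma_{hp}\|_{H^{-1/2}(\Gamma^2)}$.
\item $\cV$ is continuous and $H^{-1/2}(\Gamma^2)$-elliptic; in 2D this requires the usual scaling assumption $\operatorname{diam}\Omega^2<1$, or else a rescaling. C\'ea's lemma then gives $\|\sigma-\sigma_{hp}\|_{H^{-1/2}} \le (C_\cV/c_\cV)\inf_{\psi_{hp}\in W_{hp}(\Gamma^2)}\|\sigma-\psi_{hp}\|_{H^{-1/2}}$. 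C\'ea's lemma applies to any conforming subspace and is therefore independent of the $hp$ structure, so variable $p$ and geometric grading cause no difficulty.
\end{itemize}

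The main obstacle is the algebraic bookkeeping in the first step. It requires verifying that $f^2$ enters only through $\cN_0 f$ in the combination $w$, and that the convention $\cN := (\cK'+1/2)\cV^{-1}\cN_0 - \cN_1$ matches the sign convention in \eqref{eq:intCalderon}, so that the $\cN_1$ terms cancel exactly. A secondary point is justifying the ellipticity of $\cV$ on $H^{-1/2}(\Gamma^2)$ in two dimensions. If this fails, I would restrict to $\tilde\cV^{-1}_{hp}$ being well defined (which is implicitly assumed in \eqref{Vm1hpdef}) and use the scaling assumption on the domain.
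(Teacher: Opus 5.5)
Your proposal is correct and is essentially the paper's proof. It uses the same factorisation $(\cK'+\tfrac12)(\cV^{-1}-\tilde\cV^{-1}_{hp})\big((\cK+\tfrac12)u^2-\cN_0 f^2\big)$, identifies $\tilde\cV^{-1}_{hp}$ applied to this as the Galerkin approximation of $\tfrac{\p u^2}{\p\bn^2}$ via the first Calder\'on row, uses continuity of $\cK'+\tfrac12$ on $H^{-1/2}(\Gamma^2)$, and uses quasi-optimality of the Galerkin scheme for $\cV$; your $\cN_1$-sign worry is moot, since $\cN$ and $\hat\cN$ contain the identical term $-\cN_1$. The only nuance is that you apply C\'ea's lemma under genuine $H^{-1/2}$-ellipticity of $\cV$ (after scaling so that $\operatorname{diam}\Omega^2<1$), which gives the bound for every mesh, whereas the paper appeals only to a G\aa rding inequality, which by itself yields quasi-optimality just asymptotically.
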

\begin{proof} We prove the assertion for the interior problem. For exterior
problems the arguments are analogous. By inserting the first line of Calder\`on
system \eqref{eq:intCalderon} into its second line,
we have the following representation of the normal derivative of the solution to
the model problem on $\Gamma^2$
\begin{align*}
	\frac{\partial u^2}{\partial \bn^2} &= \mathcal{W} u^2 + 
	\left( \mathcal{K}' + \tfrac{1}{2} \right) 
	\frac{\partial u^2}{\partial \bn^2} 
	- \mathcal{N}_1 f^2\\
	&= \mathcal{W} u^2 + (\mathcal{K}'+\tfrac{1}{2})
	\mathcal{V}^{-1}(\mathcal{K}+\tfrac{1}{2}) u^2
	- ((\mathcal{K}'+\tfrac{1}{2})\mathcal{V}^{-1}\mathcal{N}_0 f^2 +
	\mathcal{N}_1f^2) = \cS u^2 - \cN f^2,
\end{align*}
whence with \eqref{Vm1hpdef}, \eqref{hatSdef}, and \eqref{hatNdef} we deduce
\[
	(\cS-\hat\cS)u^2 -(\cN-\hat\cN)f^2 = (\mathcal{K}' + 
	\tfrac{1}{2})(\cV^{-1} - \tilde\cV^{-1}_{hp}) 
	((\mathcal{K}+\tfrac{1}{2})u^2 - \mathcal{N}_0f^2)).
\]
Taking the $H^{-1/2}$-norm over $\Gamma^2$, we have by continuity of the
operator $(\mathcal{K}' + \frac{1}{2})$ on $H^{-1/2}(\Gamma^2)$, cf. e.g.
\cite{Stb08}, that
\begin{align*}
	\|(\cS-\hat\cS)u^2 - (\cN - \hat\cN) f^2\|_{H^{-1/2}(\Gamma^2)} 
	&\leq C \|(\cV^{-1} - \tilde\cV^{-1}_{hp}) 
	((\mathcal{K}+\frac{1}{2})u^2 - \mathcal{N}_0 f^2)\|_{H^{-1/2}(\Gamma^2)} \\
	&= C \|z - z_{hp}\|_{H^{-1/2}(\Gamma^2)},
\end{align*}
where $z = \p u^2 / \p  \bn^2$ and its $hp$-approximant 
$z_{hp}\in W_{hp}(\Gamma^2)$ (resp. $W^{\sigma,\mu}_{hp}(\Gamma^2)$) by the first line
of \eqref{eq:intCalderon}. Since the operator $\mathcal{V}$ 
is strongly elliptic on $H^{-1/2}(\Gamma^2)$ (cf. \cite{CosSteph85}), 
i.e. satisfies a G\aa rding inequality, 
any conforming Galerkin scheme converges, and there holds
\[
	\| z - z_{hp}\|_{H^{-1/2}(\Gamma^2)} \leq  C \inf_{\psi_{hp}\in
	 W_{hp}(\Gamma^2)} 
	 \|\tfrac{\p u^2}{\p \bn^2} - \psi_{hp}\|_{H^{-1/2}(\Gamma^2)}.
\]
\end{proof}

\subsection{Inverse inequality and stability of the lifting operator}

The following Lemma is important for proving coercivity of 
$\bilint(\cdot,\cdot)$.

\begin{lemma}[Polynomial trace inequality] \label{lem:InvIneq}
Let $K = F_K(\hat K)$ be an affine image of a reference element $\hat K$, i.e.
either $\hat K = \hat T$ or $\hat K = \hat Q$ and 
$\Psi \circ F_K\in R_p(\hat K)$, so that $\Psi$ is a polynomial on $K$. 
Suppose $J$ is an edge of $K$. Then
\begin{equation}\label{InvIneq}
\|\Psi\|_{L^2(J)}^2
\leq \invrate_K \|\Psi\|_{L^2(K)}^2,
\end{equation}
where 
\begin{equation}\label{InvRateConst}
\invrate_K =  
\begin{cases}
\frac{(p+1)(p+2)}{2} \frac{\text{\emph{Volume}}(J)}{\text{\emph{Volume}(K)}}, &
\text{if} \:\: \hat K = \hat T,\\[2ex]
(p+1)^2  \frac{\text{\emph{Volume}(J)}}{\text{\emph{Volume}(K)}}, 
& \text{if} \:\: \hat K = \hat Q.
\end{cases}
\end{equation}
\end{lemma}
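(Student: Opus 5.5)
The plan is to reduce to the reference element by the affine map and then use the sharp polynomial trace inequalities of Warburton–Hesthaven type on $\hat T$ and a one–dimensional argument on $\hat Q$.

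\emph{Step 1 (scaling).} Since $F_K$ is affine, $\|\Psi\|_{L^2(K)}^2 = \frac{|K|}{|\hat K|}\|\hat\Psi\|_{L^2(\hat K)}^2$ and $\|\Psi\|_{L^2(J)}^2 = \frac{|J|}{|\hat J|}\|\hat\Psi\|_{L^2(\hat J)}^2$, where $\hat\Psi=\Psi\circ F_K$ and $\hat J=F_K^{-1}(J)$ is an edge of $\hat K$. Both Jacobian factors are constant, so the claim is equivalent to
\[
\|\hat\Psi\|_{L^2(\hat J)}^2 \le C_{\hat K}(p)\,\frac{|\hat J|}{|\hat K|}\,\|\hat\Psi\|_{L^2(\hat K)}^2,
\]
with $C_{\hat T}(p)=(p+1)(p+2)/2$ and $C_{\hat Q}(p)=(p+1)^2$. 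Because this ratio form is invariant under affine maps, I may take any convenient affine image of the reference element and any edge.

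\emph{Step 2 (square).} On $\hat Q=[0,1]^2$ with $\hat J=\{x_1=0\}$ (so $|\hat J|/|\hat Q|=1$), I write $\hat\Psi(x_1,x_2)$ with each slice $x_1\mapsto\hat\Psi(x_1,x_2)\in\cP_p$. The one-dimensional inequality $|q(0)|^2\le (p+1)^2\int_0^1 q^2$ for $q\in\cP_p$ follows by expanding $q$ in orthonormal shifted Legendre polynomials $L_k$ with $L_k(0)^2=2k+1$ and applying Cauchy–Schwarz: $|q(0)|^2\le \sum_{k\le p}(2k+1)\,\|q\|^2=(p+1)^2\|q\|^2$. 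Integrating over $x_2$ (Fubini) gives the $\hat Q$ case.

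\emph{Step 3 (triangle), the main obstacle.} The triangle case requires the sharp constant $(p+1)(p+2)/2$, which a slicing argument does not give directly. I would use an orthonormal basis of $\cP_p(\hat T)$ adapted to the edge $\hat J$, namely Dubiner/Koornwinder polynomials in collapsed coordinates. Then
\[
\sup_{\hat\Psi}\frac{\|\hat\Psi\|^2_{L^2(\hat J)}}{\|\hat\Psi\|^2_{L^2(\hat T)}}
\]
is the largest eigenvalue of the Gram matrix $M_{ij}=\int_{\hat J}\phi_i\phi_j$. Rather than compute this eigenvalue, I would follow Warburton–Hesthaven: decompose $\cP_p(\hat T)$ into pieces that are polynomial of degree $j$ along $\hat J$ times Jacobi-type polynomials in the transverse variable. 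The restriction to $\hat J$ is a sum of one-dimensional terms, and the eigenvalue reduces to a maximisation involving the endpoint values of Jacobi polynomials $P_k^{(2j+1,0)}$ in the transverse direction. Summing the resulting endpoint values yields the constant $\frac{(p+1)(p+2)}{2}\,\frac{|\hat J|}{|\hat T|}$.

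Instead of redoing this computation, I would cite the known sharp result (Warburton–Hesthaven 2003, "On the constants in hp-finite element trace inverse inequalities"). After that, mapping back by Step 1 gives \eqref{InvIneq} with $\invrate_K$ as in \eqref{InvRateConst}, uniformly over which edge $J$ is chosen.
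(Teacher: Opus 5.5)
Your proposal is correct and follows essentially the paper's route: the paper also takes the triangle case directly from Warburton--Hesthaven (their Theorem~5), and obtains the parallelogram case by a tensor (slice-by-slice) argument from the sharp one-dimensional endpoint inequality. The only differences are that the paper cites that one-dimensional inequality (their Theorem~2), whereas you prove it via the orthonormal Legendre expansion with $L_k(0)^2=2k+1$, and that you spell out the affine scaling step the paper leaves implicit; both are correct.
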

\begin{proof}
The proof for $\hat K = \hat T$ is given in \cite[Theorem 5]{WrbHst03}. The case
of $\hat K = \hat Q$  is proved using a tensor argument in combination with 
\cite[Theorem 2]{WrbHst03}.
\end{proof}

\begin{remark}
Note that 
\begin{equation}\label{invrate_est}
\invrate_K \sim h_K^{-1}(p_{K}+1)^2,
\end{equation}
where the equivalence constant only depends on the shape regularity of $K$.
\end{remark}

As a corollary of Lemma \ref{lem:InvIneq} we get the following stability
estimate for the lifting operator.
\begin{lemma} \label{lem:L_stab}
For arbitrary $v \in \fspace$ there holds
\begin{equation}\label{L_stab}
\|\kappa^{1/2} \lifting(v)\|_{[L^2(\Gstrip)]^2}
\leq \|[v]\|_\normp{\kappa \invrate},
\end{equation}
where $\invrate \in L^\infty(\Gstrip)$ is a constant $\invrate_K$ on 
$K \in \meshFEM$ as in (\ref{InvRateConst}).
\end{lemma}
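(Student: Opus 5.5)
The plan is a duality argument: test the defining relation \eqref{defLifting} with a weighted copy of $\lifting(v)$ itself, then use the polynomial trace inequality of Lemma~\ref{lem:InvIneq} elementwise. Since $\kappa$ is piecewise constant on $\meshFEM$, the field $\bPsi := \kappa\lifting(v)$ again lies in $\spaceFEMgrad$. It is therefore an admissible test function, and we get
\[
\|\kappa^{1/2}\lifting(v)\|^2_{[L^2(\Gstrip)]^2} = (\lifting(v),\kappa\lifting(v))_{\Gstrip} = \big<[v],\kappa\lifting(v)\cdot\bn^1\big>_{\Gamma_I}.
\]

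Next we split the interface integral over the trace mesh $\meshIone$. Each $J\in\meshIone$ is an edge of exactly one element $K_J\in\meshFEM$, and on $J$ we take $\kappa=\kappa_{K_J}$ and $\invrate=\invrate_{K_J}$. On each $J$ we insert $(\kappa\invrate)^{1/2}(\kappa\invrate)^{-1/2}$ and apply Cauchy--Schwarz, as in \eqref{eq:weightedCS}. This gives
\[
\big<[v],\kappa\lifting(v)\cdot\bn^1\big>_{\Gamma_I}\le \|[v]\|_{\normp{\kappa\invrate}}\Big(\sum_{J}\kappa_{K_J}\invrate_{K_J}^{-1}\|\lifting(v)\cdot\bn^1\|^2_{L^2(J)}\Big)^{1/2}.
\]
The componentwise bound $|\lifting(v)\cdot\bn^1|\le|\lifting(v)|$ holds because $\bn^1$ is a unit vector, constant on each straight edge. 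Each component of $\lifting(v)|_{K_J}$ is a polynomial in $\cR_{p_K}$ after pulling back by $F_{K_J}$. Applying Lemma~\ref{lem:InvIneq} to each component and summing gives $\|\lifting(v)\|^2_{L^2(J)}\le\invrate_{K_J}\|\lifting(v)\|^2_{L^2(K_J)}$. Hence the sum in parentheses is bounded by $\sum_J\kappa_{K_J}\|\lifting(v)\|^2_{L^2(K_J)}$.

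The main point to check is that no element is counted more than once. If an element $K$ touches $\Gamma_I$ along several edges, the constant $\invrate_K$ from Lemma~\ref{lem:InvIneq} is edge-specific through the factor Volume$(J)$/Volume$(K)$, so the per-edge bounds add up to more than one copy of $\|\lifting(v)\|^2_{L^2(K)}$. I would handle this by defining $\invrate$ on $\Gamma_I$ per edge, so that it is constant on each $J$ as the statement intends. For the common case, where each element has at most one interface edge, the sum is then at most $\|\kappa^{1/2}\lifting(v)\|^2_{L^2(\Gstrip)}$. If an element has several interface edges, one either absorbs a factor (at most $3$ or $4$) into $\invrate$ or uses the sharper multi-edge trace bound; I expect the intended setting to be the single-edge case.

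With the sum controlled, we divide by $\|\kappa^{1/2}\lifting(v)\|$; the case where this norm vanishes is trivial. This yields \eqref{L_stab}.
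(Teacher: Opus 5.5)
Your proof is correct and is essentially the paper's argument. The paper bounds $\|\lifting(v)\|_{L^2(K)}$ elementwise as a supremum over $\bPhi\in\spaceFEMgradK$, then applies \eqref{defLifting}, Cauchy--Schwarz on $J_K=K\cap\Gamma_I$ and Lemma~\ref{lem:InvIneq}, and sums with weights $\kappa_K$; this is the same chain you get by testing globally with $\kappa\lifting(v)$, which is legitimate because $\kappa$ is piecewise constant. Your multi-edge worry is resolved without loss by taking Volume$(J)$ in $\invrate_K$ to be the total length of $J_K$, as the paper implicitly does, since the edgewise bounds of Lemma~\ref{lem:InvIneq} add up in the edge length. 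This is your "sharper multi-edge bound", so the per-edge $\invrate$ you propose first, which would cost a factor, is not needed.
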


\begin{proof}
We use the orthogonal decomposition $\big[L^2(K)\big]^2 = 
\spaceFEMgradK \oplus \spaceFEMgradK^\perp$ to obtain for every 
$K \in \meshFEM$ 
with $K\cap \Gamma_I\neq\emptyset$
\begin{align*}
\|\lifting(v)\|_{[L^2(K)]^2}
= \sup_{0\neq \bPhi \in [L^2(K)]^2} 
\dfrac{(\lifting(v),\bPhi)_K}{\|\bPhi\|_{[L^2(K)]^2}}
= \sup_{0\neq \bPhi \in \spaceFEMgradK} 
\dfrac{(\lifting(v),\bPhi)_K}{\|\bPhi\|_{[L^2(K)]^2}}.
\end{align*}
Recalling definition (\ref{defLifting}) we find for $J_K := K \cap \Gamma_I$
\[
(\lifting(v),\bPhi)_K
= \big<[v],\bPhi\cdot \bn^1 \big>_{J_K}
\leq \|[v]\|_{L^2(J_K)} \|\bPhi \cdot \bn^1\|_{L^2(J_K)}
\leq \|[v]\|_{L^2(J_K)} \invrate_K^{1/2} \|\bPhi\|_{[L^2(K)]^2},
\]
where the inverse inequality (\ref{InvIneq}) has been used with the polynomial  
$\Psi^2 := \bPhi \cdot \bPhi$.
Summing over the elements gives
\begin{equation*}
\|\kappa^{1/2}\lifting(v)\|^2_{[L^2(\Gstrip)]^2}
\leq \sum_{K \in \meshFEM} \kappa_K \invrate_K \|[v]\|_{L^2(J_K)}^2
= \|[v]\|_\normp{\kappa \invrate}^2.
\end{equation*}
\end{proof}

\subsection{Continuity and coercivity of $\bilint$, quasi-optimal convergence}

We next show the continuity and coercivity of the bilinear form 
$\bilint(\cdot,\cdot)$ that is required to ensure convergence of the proposed
method.

\begin{lemma}[Continuity of $\bilint$] \label{lem:Cont_sharp} 
There holds
\begin{equation}\label{Cont_sharp}
\bilint(v,\Psi) \leq 2
\|v   \|_\norme{(\eta+\kappa \invrate)}
\|\Psi\|_\norme{(\eta+\kappa \invrate)}, 
\qquad \forall v \in \fspace,\quad \Psi \in \spaceFBEM.
\end{equation}
\end{lemma}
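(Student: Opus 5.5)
We bound the five terms of $\bilint(v,\Psi)$ in \eqref{bilintheo} separately and then combine them with a discrete Cauchy--Schwarz inequality. Abbreviate $a_1 := \|\kappa^{1/2}\nabla v^1\|_{L^2(\Omega^1)}$, $a_2 := \dualp{\hat\cS v^2}{v^2}_{\Gamma^2}^{1/2}$, $a_3 := \|[v]\|_{\normp{\eta}}$ and $a_4 := \|[v]\|_{\normp{\kappa\invrate}}$. Define $b_1,\dots,b_4$ analogously for $\Psi$. Since $\eta$ and $\kappa\invrate$ enter additively in the weighted norm, we have
\[
\|v\|^2_{\norme{(\eta+\kappa\invrate)}} = a_1^2+a_2^2+a_3^2+a_4^2 ,
\]
and the same holds for $\Psi$ with the $b_i$.

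\textbf{Step 1: the diagonal terms.} The volume term is bounded by $(\kappa\nabla v^1,\nabla\Psi^1)_{\Omega^1}\le a_1b_1$ using the weighted Cauchy--Schwarz inequality with weight $\kappa$. The BE term $\dualp{\hat\cS v^2}{\Psi^2}_{\Gamma^2}\le a_2b_2$ follows from Cauchy--Schwarz for the symmetric positive semidefinite form induced by $\hat\cS$. Symmetry holds because $\cW$ is symmetric and $(\cK'+1/2)\tilde\cV^{-1}_{hp}(\cK+1/2)$ is symmetric, given that $\cK'$ is the adjoint of $\cK$ and $\tilde\cV^{-1}_{hp}$ is symmetric. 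Positivity comes from \eqref{coel_hatS}. The penalty term satisfies $\dualp{\eta[v]}{[\Psi]}_{\Gamma_I}\le a_3b_3$.

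\textbf{Step 2: the lifting terms.} This is the only place where $v\in\fspace$, rather than discrete $v$, matters. For $(\kappa\nabla v^1,\lifting(\Psi))_{\Gstrip}$ we apply Cauchy--Schwarz with weight $\kappa$ and then Lemma~\ref{lem:L_stab}:
\[
(\kappa\nabla v^1,\lifting(\Psi))_{\Gstrip}\le a_1\,\|\kappa^{1/2}\lifting(\Psi)\|_{L^2(\Omega^1)}\le a_1 b_4 .
\]
Lemma~\ref{lem:L_stab} holds for arbitrary arguments in $\fspace$, so the term $(\lifting(v),\kappa\nabla\Psi^1)_{\Gstrip}\le a_4b_1$ is handled the same way. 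No $H^2$ regularity of $v^1$ is needed, because the lifting has absorbed the normal derivative. This is the reason for working with $\bilint$ rather than $\bilinc$.

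\textbf{Step 3: combining.} Adding the bounds gives
\[
\bilint(v,\Psi)\le a_1b_1+a_2b_2+a_3b_3+a_1b_4+a_4b_1 .
\]
Apply Cauchy--Schwarz in $\R^5$ to the vectors $(a_1,a_2,a_3,a_1,a_4)$ and $(b_1,b_2,b_3,b_4,b_1)$. The squared norm of the first vector is $2a_1^2+a_2^2+a_3^2+a_4^2\le 2\|v\|^2_{\norme{(\eta+\kappa\invrate)}}$, and likewise the second is at most $2\|\Psi\|^2_{\norme{(\eta+\kappa\invrate)}}$. Taking square roots yields the factor $2$ in \eqref{Cont_sharp}.

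\textbf{Main obstacle.} The only delicate point is the symmetry and positivity of $\hat\cS$ needed for its Cauchy--Schwarz bound. Note that \eqref{coel_hatS} as stated carries stray squares, so we will rely on the semidefiniteness and symmetry argument given in Step 1 instead. The rest of the proof is bookkeeping.
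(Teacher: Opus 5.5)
Your proof is correct and is essentially the paper's argument. You bound the five terms by Cauchy--Schwarz with the $\kappa$-weighted volume term counted twice, and use Lemma~\ref{lem:L_stab} to convert $\|\kappa^{1/2}\lifting(\cdot)\|$ into the $\kappa\invrate$-weighted jump norm; the paper simply applies the five-term Cauchy--Schwarz before the lifting bound instead of after. One small nuance: \eqref{coel_hatS} gives positivity of $\hat\cS$ only on $V_{hp}(\Gamma^2)$, so for $v^2\in V(\Gamma^2)$ the semidefiniteness you need should come from $\cW\ge 0$ together with the positive definiteness of $i'_{hp}\cV i_{hp}$. The statement already presupposes this for $\|v\|_{\norme{(\eta+\kappa\invrate)}}$ to be defined.
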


\begin{proof}
The Cauchy-Schwarz inequality gives for arbitrary $v \in \fspace$, 
$\Psi \in \spaceFBEM$
\begin{align*}
\bilint(v,\Psi)
\leq& 
\left(
2 (\kappa \nabla v^1,\nabla v^1)_{\Omega^1}
+ \big<\hat S v^2,v^2\big>_{\Gamma^2}
+ \|\kappa^{1/2}\lifting(v)\|_{[L^2(\Gstrip)]^2}^2
+ \|[v]\|_\normp{\eta}^2
\right)^{1/2}\\
\times& 
\left(
2(\kappa \nabla \Psi^1,\nabla \Psi^1)_{\Omega^1} 
+ \big<\hat S  \Psi^2,\Psi^2\big>_{\Gamma^2}
+ \|\kappa^{1/2}\lifting(\Psi)\|_{[L^2(\Gstrip)]^2}^2
+ \|[\Psi]\|_\normp{\eta}^2
\right)^{1/2}.
\end{align*}
The assertion follows by stability of $\lifting$ (\ref{L_stab}) and
\begin{align*}
\|\kappa^{1/2}\lifting(v)\|_{[L^2(\Gstrip)]^2}^2
+ \|[v]\|_\normp{\eta}^2
\leq
\sum_{J \in \meshIone}  \|(\eta+\kappa \invrate)^{1/2}[v]\|_{L^2(J)}^2.
\end{align*}
\end{proof}

\begin{remark} \label{rem:fullnorm}
In formulation (\ref{weak_c}), continuity only holds with respect to the
extended norm 
\[
\interleave\phi \interleave^2_\norme{\eta}:=\|\phi\|^2_\norme{\eta} +
\|\kappa\nabla\phi^1\cdot\mathbf {n^1}\|^2_\normm{\eta},
\]
since the traces of the normal derivatives  cannot be bounded 
on $V.$ Note, however, that $\interleave\cdot \interleave_\norme{\eta}$ and 
$\|\cdot\|_\norme{\eta}$ are equivalent norms on 
$V_{hp}$ due to (\ref{InvIneq}).
\end{remark}

\begin{lemma} [Coercivity of $\bilint$] \label{lem:ell} The bilinear form 
$\bilint$ is coercive in 
$({\spaceFBEM},{\|\cdot\|_\norme{(\eta-\delta\kappa \invrate)}})$ provided
\begin{equation*} \label{sigma_stab}
\eta > \delta\kappa \invrate, \qquad
\delta >1 \quad  
\mbox{ a.e. on } \Gamma_I,
\end{equation*}
i.e.
\begin{equation}\label{ell}
\bilint(\Psi,\Psi) 
\geq \dfrac{\delta-1}{\delta}\|\Psi\|_\norme{(\eta - \delta\kappa \invrate)}^2,
\qquad \forall \Psi \in \spaceFBEM.
\end{equation}
\end{lemma}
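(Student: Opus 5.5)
We expand $\bilint(\Psi,\Psi)$ and absorb the cross term through the lifting stability of Lemma \ref{lem:L_stab} and a weighted Young inequality. For $\Psi\in\spaceFBEM$ the definition \eqref{bilintheo} gives
\begin{equation*}
\bilint(\Psi,\Psi) = \|\kappa^{1/2}\nabla\Psi^1\|^2_{[L^2(\Omega^1)]^2} + \big<\hat\cS\Psi^2,\Psi^2\big>_{\Gamma^2} - 2(\kappa\nabla\Psi^1,\lifting(\Psi))_{\Gstrip} + \|[\Psi]\|^2_{\normp{\eta}} .
\end{equation*}
The only indefinite term is the cross term.

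First, by Cauchy--Schwarz, Young's inequality with parameter $\delta>1$, and Lemma \ref{lem:L_stab},
\begin{equation*}
2(\kappa\nabla\Psi^1,\lifting(\Psi))_{\Gstrip} \leq \tfrac{1}{\delta}\|\kappa^{1/2}\nabla\Psi^1\|^2_{[L^2(\Omega^1)]^2} + \delta\|\kappa^{1/2}\lifting(\Psi)\|^2_{[L^2(\Gstrip)]^2} \leq \tfrac{1}{\delta}\|\kappa^{1/2}\nabla\Psi^1\|^2_{[L^2(\Omega^1)]^2} + \|[\Psi]\|^2_{\normp{\delta\kappa\invrate}} .
\end{equation*}
Substituting this bound yields
\begin{equation*}
\bilint(\Psi,\Psi)\geq \tfrac{\delta-1}{\delta}\|\kappa^{1/2}\nabla\Psi^1\|^2_{[L^2(\Omega^1)]^2} + \big<\hat\cS\Psi^2,\Psi^2\big>_{\Gamma^2} + \|[\Psi]\|^2_{\normp{\eta-\delta\kappa\invrate}} .
\end{equation*}
The last term is a genuine squared weighted norm because $\eta-\delta\kappa\invrate>0$ a.e. on $\Gamma_I$. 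Here one should check that $\invrate$, defined elementwise on $\meshFEM$, induces a piecewise constant weight on the trace mesh $\meshIone$, with value $\invrate_K$ on $J_K=K\cap\Gamma_I$. This holds because the jump weights in Lemma \ref{lem:L_stab} are indexed by exactly these edges.

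Next, since $(\delta-1)/\delta<1$ and the remaining two terms are nonnegative (the $\hat\cS$ term by \eqref{coel_hatS}), each term is bounded below by $\frac{\delta-1}{\delta}$ times itself. This gives \eqref{ell} with the norm $\|\cdot\|_{\norme{(\eta-\delta\kappa\invrate)}}$.

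Finally, this quantity is indeed a norm on $\spaceFBEM$. If it vanishes, then $\nabla\Psi^1=0$, $\Psi^2=0$ by \eqref{coel_hatS}, and $[\Psi]=0$. Hence $\Psi^1$ is constant, and it vanishes by the Dirichlet condition on $\Gamma^1_D$, which I would assume is nonempty. If $\Gamma^1_D$ were empty, one would instead use $[\Psi]=0$ and $\Psi^2=0$ on $\Gamma_I$.

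The main point needing care is the middle step: the correct placement of $\delta$ in Young's inequality, so that the $\kappa$-weighted lifting bound produces exactly the weight $\delta\kappa\invrate$. Beyond that the argument is routine.
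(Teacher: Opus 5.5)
Your proof is correct and follows the paper's argument essentially verbatim. You expand $\bilint(\Psi,\Psi)$, bound the cross term by Young's inequality with weight $\delta$ combined with the lifting stability of Lemma~\ref{lem:L_stab}, and then use $(\delta-1)/\delta<1$ to extract the common factor; your extra remarks on the piecewise constant trace weight and on the norm property are not in the paper's proof but are consistent with it.
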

\begin{proof}
By definition (\ref{bilintheo}) of the bilinear form $\bilint$ we have
\begin{equation*}
\bilint(\Psi,\Psi) \\
= (\kappa \nabla \Psi^1, \nabla \Psi^1)_{\Omega^1} 
+ \big<\hat \cS \Psi^2, \Psi^2 \big>_{\Gamma^2}
- 2(\kappa \nabla \Psi^1,\lifting(\Psi) )_{\Gstrip}
+ \|[\Psi]\|^2_{\normp{\eta}}.
\end{equation*}
The stability of $\lifting$ expressed by (\ref{L_stab}) ensures that for
arbitrary $\delta > 0$ there holds
\begin{align*}
2(\kappa \nabla \Psi^1,\lifting(\Psi) )_{\Gstrip}
&\leq \delta^{-1} 
(\kappa \nabla \Psi^1,\nabla \Psi^1)_{\Gstrip} 
+ \delta \|\kappa^{1/2}\lifting(\Psi)\|^2_{[L^2(\Gstrip)]^2}\\
&\leq \delta^{-1}
(\kappa\nabla\Psi^1, \nabla\Psi^1)_{\Omega^1} 
+ \delta \|[\Psi]\|^2_{\normp{\kappa\invrate}}.
\end{align*}
Thus,
\begin{equation*}
\bilint(\Psi,\Psi) \\
\geq (1 - \delta^{-1}) 
(\kappa\nabla \Psi^1,\nabla\Psi^1)_{\Omega^1} 
+ \big<\hat \cS \Psi^2, \Psi^2 \big>_{\Gamma^2}
+ \|[\Psi]\|^2_{\normp{(\eta-\delta\kappa\invrate)}}
\end{equation*}
and the assertion follows.

\end{proof}

Note that the norms $\|\cdot\|_\norme{(\eta+\kappa \invrate)}$ and 
$\|\cdot\|_\norme{(\eta - \delta\kappa \invrate)}$ are equivalent on the finite
dimensional space $\spaceFBEM$ and the equivalence constants are independent of
$h, p$ if $\eta = (\delta+\varepsilon)\kappa \invrate$ for some $\delta>1$ and 
$\varepsilon > 0$, cf. Theorem \ref{thm:ba_bound} below.
The right hand side $l_{hp}(\cdot)$ in the discrete formulation (\ref{weak_t})
is continuous on $\spaceFBEM$. 
The bilinear form $\bilint(\cdot,\cdot)$, due to Lemma \ref{lem:Cont_sharp} and
Lemma \ref{lem:ell}, is continuous and coercive on $\spaceFBEM$.
Thus the Lax-Milgram lemma guarantees
\begin{theorem}
The discrete problem (\ref{weak_t}) has a unique solution $U \in \spaceFBEM$.
\end{theorem}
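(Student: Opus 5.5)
The plan is to apply the Lax--Milgram lemma on the finite dimensional space $\spaceFBEM$; equivalently, it suffices to show that the homogeneous problem has only the trivial solution. We fix the stabilisation $\eta > \delta\kappa\invrate$ a.e. on $\Gamma_I$ for some $\delta>1$, which is the standing assumption under which Lemma \ref{lem:ell} applies. Since $\spaceFBEM$ is finite dimensional, (\ref{weak_t}) is a square linear system. Existence therefore follows from uniqueness, and uniqueness follows from injectivity of the operator induced by $\bilint$ on $\spaceFBEM$.

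\textbf{Step 1.} Suppose $U\in\spaceFBEM$ satisfies $\bilint(U,\Phi)=0$ for all $\Phi\in\spaceFBEM$. Taking $\Phi=U$ and invoking (\ref{ell}) gives
\[
\dfrac{\delta-1}{\delta}\|U\|^2_\norme{(\eta-\delta\kappa\invrate)} \le 0 .
\]
Hence each summand of the norm vanishes:
\begin{itemize}
\item $(\kappa\nabla U^1,\nabla U^1)_{\Omega^1}=0$;
\item $\langle\hat\cS U^2,U^2\rangle_{\Gamma^2}=0$;
\item $\|(\eta-\delta\kappa\invrate)^{1/2}[U]\|_{L^2(\Gamma_I)}=0$.
\end{itemize}

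\textbf{Step 2.} We show that $\|\cdot\|_\norme{(\eta-\delta\kappa\invrate)}$ is a genuine norm on $\spaceFBEM$, so that the three identities force $U=0$.
\begin{itemize}
\item Coercivity (\ref{coel_hatS}) of $\hat\cS$ turns $\langle\hat\cS U^2,U^2\rangle_{\Gamma^2}=0$ into $\|U^2\|_{H^{1/2}(\Gamma^2)}=0$, so $U^2=0$.
\item Since $\kappa\ge\kappa_{\min}>0$, we get $\nabla U^1=0$. Thus $U^1$ is constant on the connected domain $\Omega^1$.
\item If $\Gamma^1_D$ has positive measure, the boundary condition built into $\spaceFEM$ gives $U^1=0$.
\item Otherwise, the jump term with $\eta-\delta\kappa\invrate>0$ a.e. gives $[U]=U^1-U^2=0$ on $\Gamma_I$, hence $U^1=U^2=0$ there, and again $U^1=0$.
\end{itemize}

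\textbf{Step 3.} Continuity of the linear functional $l_{hp}$ on $\spaceFBEM$ is automatic because the space is finite dimensional; it also follows from the coercivity bound on $\|\Phi^2\|_{H^{1/2}}$ and standard trace estimates. Continuity of $\bilint$ is Lemma \ref{lem:Cont_sharp}. This completes the Lax--Milgram hypotheses, and it also shows the solution depends stably on the data.

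The only delicate point is Step 2, namely that the energy-type expression is definite rather than merely a seminorm. This hinges on strict positivity of $\eta-\delta\kappa\invrate$ together with the coercivity of $\hat\cS$. I would handle it via the case split above on whether $\Gamma^1_D$ is nontrivial.
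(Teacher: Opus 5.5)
Your proposal is correct and follows essentially the same route as the paper, which simply invokes the Lax--Milgram lemma with continuity from Lemma~\ref{lem:Cont_sharp} and coercivity from Lemma~\ref{lem:ell}; that coercivity needs $\eta>\delta\kappa\invrate$ with $\delta>1$, which the paper leaves implicit and you rightly make explicit. Your Step~2, checking that $\|\cdot\|_{\norme{(\eta-\delta\kappa\invrate)}}$ is definite on $\spaceFBEM$, fills in a detail the paper takes for granted; the jump term alone already forces $U^1=0$ once $U^2=0$, so the case split on $\Gamma^1_D$ is unnecessary.
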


Based on the previous estimates we derive quasi-optimal convergence of the
proposed methods in the $\|\cdot\|_\norme{\kappa \invrate}$-norm for
sufficiently large $\eta$.

\begin{theorem}[Quasi-optimal convergence]\label{thm:ba_bound}
Suppose $u \in \fspace$ is a weak solution of (\ref{p2}) and $U \in \spaceFBEM$
is a solution of (\ref{weak_c}) or (\ref{weak_t}) with 
the stabilization function $\eta := \eta_0\kappa \invrate$ for arbitrary but
fixed constant $\eta_0>\delta>1$. Then 
\begin{equation} \label{ba_bound}
\|u - U\|_\norme{\kappa \invrate} \leq (2 c_{\delta,\eta_0} +1)
\inf_{\Phi \in \spaceFBEM}
\|u - \Phi\|_\norme{\kappa \invrate}
+ c_{\delta,\eta_0} \res{\kappa \invrate}(u)
\end{equation}
with $c_{\delta,\eta_0} = 
\dfrac{\delta}{\delta-1}\cdot\dfrac{\eta_0 + 1}{\min\{\eta_0-\delta,1\}}$.
\end{theorem}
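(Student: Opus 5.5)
The plan is a Strang-type argument: combine the coercivity of Lemma \ref{lem:ell}, the continuity of Lemma \ref{lem:Cont_sharp}, and the residual definition \eqref{def_res}. Fix an arbitrary $\Phi \in \spaceFBEM$ and split $u - U = (u-\Phi) + (\Phi - U)$. The triangle inequality reduces the claim to bounding the discrete part $\Psi := \Phi - U \in \spaceFBEM$ by $2c_{\delta,\eta_0}\|u-\Phi\|_\norme{\kappa\invrate} + c_{\delta,\eta_0}\res{\kappa\invrate}(u)$. Taking the infimum over $\Phi$ at the end then gives \eqref{ba_bound}.

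\emph{Step 1: norm equivalences.} With $\eta = \eta_0\kappa\invrate$, the three relevant weights are pointwise multiples of $\kappa\invrate$:
\[
\eta - \delta\kappa\invrate = (\eta_0-\delta)\kappa\invrate, \qquad \eta + \kappa\invrate = (\eta_0+1)\kappa\invrate .
\]
Since the volume terms of $\|\cdot\|_\norme{\cdot}$ do not involve the weight, this gives
\[
\min\{\eta_0-\delta,1\}\,\|\cdot\|^2_\norme{\kappa\invrate} \le \|\cdot\|^2_\norme{(\eta-\delta\kappa\invrate)},
\qquad
\|\cdot\|^2_\norme{(\eta+\kappa\invrate)} \le (\eta_0+1)\|\cdot\|^2_\norme{\kappa\invrate},
\]
and likewise $\|\cdot\|_\norme{\eta} \le (\eta_0+1)^{1/2}\|\cdot\|_\norme{\kappa\invrate}$.

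\emph{Step 2: Galerkin-type identity.} From \eqref{weak_t} we have $\bilint(U,\Psi) = l_{hp}(\Psi)$, and therefore
\[
\bilint(\Phi - U,\Psi) = \bilint(\Phi - u,\Psi) + \resphi(u,\Psi).
\]
Lemma \ref{lem:ell} together with Step 1 bounds the left side from below by $\frac{\delta-1}{\delta}\min\{\eta_0-\delta,1\}\|\Psi\|^2_\norme{\kappa\invrate}$. For the right side:
\begin{itemize}
\item Lemma \ref{lem:Cont_sharp} gives $\bilint(\Phi-u,\Psi) \le 2(\eta_0+1)\|u-\Phi\|_\norme{\kappa\invrate}\|\Psi\|_\norme{\kappa\invrate}$.
\item The definition of the residual gives $\resphi(u,\Psi) \le \res{\eta}(u)\,\|\Psi\|_\norme{\eta}$.
\end{itemize}

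\emph{Step 3: change of weight in the residual.} I would express the residual in the $\kappa\invrate$ weight. Because $\|\cdot\|_\norme{\kappa\invrate} \le c\,\|\cdot\|_\norme{\eta}$ with $c = \max\{1,\eta_0^{-1/2}\}$, we get $\res{\eta}(u) \le c\,\res{\kappa\invrate}(u)$. Combined with $\|\Psi\|_\norme{\eta} \le (\eta_0+1)^{1/2}\|\Psi\|_\norme{\kappa\invrate}$, this gives $\resphi(u,\Psi) \le (\eta_0+1)\res{\kappa\invrate}(u)\|\Psi\|_\norme{\kappa\invrate}$, with room to spare. More directly, $\resphi(u,\Psi) \le \res{\kappa\invrate}(u)\|\Psi\|_\norme{\kappa\invrate}$ holds by definition of the supremum. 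Dividing by $\|\Psi\|_\norme{\kappa\invrate}$ then yields
\[
\|\Psi\|_\norme{\kappa\invrate} \le c_{\delta,\eta_0}\bigl(2\|u-\Phi\|_\norme{\kappa\invrate} + \res{\kappa\invrate}(u)\bigr),
\]
where the residual coefficient $\frac{\delta}{(\delta-1)\min\{\eta_0-\delta,1\}}$ is at most $c_{\delta,\eta_0}$.

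\emph{Main obstacle.} The only delicate point is bookkeeping. Continuity is stated for $v \in \fspace$ versus $\Psi$ discrete, and it must be applied with $v = \Phi - u$; this is fine because the lifting of $v \in V$ is defined. In addition, the weights in continuity, coercivity and the residual must be consistently converted to $\kappa\invrate$ so that the constant exactly matches $c_{\delta,\eta_0}$.
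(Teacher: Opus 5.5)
Your proposal is correct and matches the paper's proof: coercivity (Lemma \ref{lem:ell}) applied to $\Phi-U$, the identity $\bilint(\Phi-U,\Psi)=\bilint(\Phi-u,\Psi)+\resphi(u,\Psi)$, continuity (Lemma \ref{lem:Cont_sharp}), rewriting the weights $\eta-\delta\kappa\invrate$ and $\eta+\kappa\invrate$ as multiples of $\kappa\invrate$, and the triangle inequality. The only difference is harmless: you bound the residual directly by $\res{\kappa\invrate}(u)\,\|\Psi\|_\norme{\kappa\invrate}$, whereas the paper goes through the $(\eta+\kappa\invrate)$-weighted norm; your route gives the smaller residual coefficient $\frac{\delta}{(\delta-1)\min\{\eta_0-\delta,1\}}$, which is still bounded by $c_{\delta,\eta_0}$.
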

\begin{proof}
There holds
\begin{align*}
\frac{\delta-1}{\delta} \|U - \Phi\|_\norme{(\eta-\delta\kappa\invrate)}^2
\stackrel{(\ref{ell})}{\leq}& 
  \bilint(U-\Phi,U-\Phi) \\[1ex]
\stackrel{(\ref{def_res})}{=}&
  \bilint(u-\Phi,U-\Phi)
 - \resphi(u,U-\Phi)\\[1ex]
\overset{(\ref{Cont_sharp}),(\ref{def_res})}{\:\,\,\quad\leq}&
\big(2\|u - \Phi\|_\norme{(\eta+\kappa \invrate)} +
 \res{(\eta+\kappa \invrate)}(u)\big)
 \|U - \Phi\|_\norme{(\eta+\kappa \invrate)}.
\end{align*}
Thus, for $\eta = \eta_0\kappa \invrate$ with $\eta_0>\delta$
\[
\|U-\Phi\|_\norme{\kappa\invrate}
\leq 
2c_{\delta,\eta_0}\|u - \Phi\|_\norme{\kappa \invrate} 
+ c_{\delta,\eta_0}\res{\kappa \invrate}(u).
\]
Therefore by triangle inequality
\begin{align*}
\|u-U\|_\norme{\kappa\invrate} &\leq \|u- \Phi\|_\norme{\kappa \invrate} 
+ \|U- \Phi\|_\norme{\kappa \invrate}\\[1ex]
&\leq (2c_{\delta,\eta_0}+1)\|u - \Phi\|_\norme{\kappa \invrate} 
+ c_{\delta,\eta_0}\res{\kappa\invrate}(u)
\end{align*}
and passing to the infimum over all $\Phi\in\spaceFBEM$ proves the claim.
\end{proof}

\begin{remark}
Note that 
\[
\min_{\delta>1,\:\: \eta_0-\delta>0} c_{\delta,\eta_0} =  2 (2+\sqrt{3})
\]
is attained at $\delta = 1+\sqrt{3}$, $\eta_0-\delta = 1$.
\end{remark}

\section{A priori error analysis}

\indent Theorem \ref{thm:ba_bound} allows us to bound 
$\|u-U\|_\norme{\kappa \invrate}$ by the sum of the best approximation terms on
the right-hand side of (\ref{ba_bound}). In what follows we recall some
approximation properties and deduce a priori error estimates in Theorem
\ref{thm:apriori_sharp} and Theorem \ref{thm:apriori_exp}. 

\subsection{$hp$-Nitsche on quasi uniform meshes}
\begin{lemma} \label{lem:estinf1}
Suppose $v \in \fspaceFEM \cap H^r(\Omega^1)$, $r > 3/2$. Then 
$\exists \Psi \in \spaceFEM$ and $C>0$ independent of the discretisation
parameters such that
\begin{align}
\|\nabla(v - \Psi)\|_{L^2(K)}
&\leq C 
\dfrac{h_K^{\rho_K - 1}}{p_K^{r-1}}
\|v\|_{H^r(K)},&&
\forall K \in \meshFEM, \label{estinf11}\\
\|v- \Psi\|_{L^2(J)} 
&\leq C \dfrac{h_K^{\rho_K - 1/2}}{p_K^{r-1/2}}
\|v\|_{H^r(K_J)},&&
\forall J \in \meshIone,\label{estinf12}
\end{align}
where $\rho_K := \min\{r,p_K+1\}$ and $K_J \in \meshFEM$ is such that $J$ is a
side of $K_J$.
\end{lemma}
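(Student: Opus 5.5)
This is a standard $hp$-approximation result, and I would obtain it from the known $H^1$-conforming $hp$-interpolation operators rather than prove it from scratch. The plan has three steps.

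\emph{Step 1: choose the interpolant.} I take $\Psi := \Pi_{hp} v$, where $\Pi_{hp}$ is a projection-based (or Babu\v{s}ka--Suri/Melenk type) $hp$-quasi-interpolant. It is constructed elementwise: vertex values, then edge projections, then interior bubbles. With this construction $\Psi$ is continuous across element edges, since the edge contributions depend only on the trace of $v$ on that edge. The Dirichlet condition on $\Gamma^1_D$ is preserved because the vertex and edge data vanish there. Hence $\Psi \in \spaceFEM$. The assumption $r>3/2$ guarantees that $v$ has well-defined point values and $H^{1}$ edge traces, so the vertex/edge construction makes sense.

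\emph{Step 2: estimate on the reference element.} On $\hat K$ (either $\hat T$ or $\hat Q$), I use the $p$-version estimates for $\hat v := v\circ F_K$:
\[
\|\hat v-\hat\Pi_p\hat v\|_{H^1(\hat K)}\le C p^{-(r-1)}\|\hat v\|_{H^r(\hat K)},\qquad \|\hat v-\hat\Pi_p\hat v\|_{L^2(\hat J)}\le C p^{-(r-1/2)}\|\hat v\|_{H^r(\hat K)}.
\]
For the first inequality I cite Babu\v{s}ka--Suri, Schwab's monograph \cite{Swb98}, or Melenk. I expect the second, the edge bound with the improved power $r-1/2$, to be the main obstacle. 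The trace inequality applied to the $H^1$ error only yields $p^{-(r-1)}$. I would try two routes:
\begin{itemize}
\item Interpolate the trace estimate between $L^2$ and $H^1$ via the multiplicative trace inequality $\|w\|_{L^2(\hat J)}^2\le C\|w\|_{L^2(\hat K)}\|w\|_{H^1(\hat K)}$. Combined with the $L^2$ error bound $p^{-r}$ (available for projection-based interpolants with optimal $L^2$ rates), this gives exactly $p^{-(r-1/2)}$.
\item Alternatively, use that the edge component of the interpolant is a 1D $H^{1/2}$- or $L^2$-type projection of the trace $\hat v|_{\hat J}\in H^{r-1/2}(\hat J)$, so the 1D $p$-approximation rate $p^{-(r-1/2)}$ follows directly.
\end{itemize}

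\emph{Step 3: scale back to $K$.} I apply the affine map $F_K$ together with a Bramble--Hilbert argument. Since the interpolant reproduces polynomials of degree $p_K$, I may replace $\hat v$ by $\hat v-q$ for any $q\in\cR_{p_K}(\hat K)$. Taking $q$ optimally leaves only the seminorm of order $\rho_K=\min\{r,p_K+1\}$ (the estimate is applied with $r$ replaced by $\rho_K$ when $p_K+1<r$). The usual scaling then gives:
\begin{itemize}
\item $h_K^{\rho_K-1}$ for the gradient in $L^2(K)$;
\item $h_K^{\rho_K-1/2}$ for the $L^2(J)$-norm, from $|J|\sim h_K$ and $|K|\sim h_K^2$.
\end{itemize}
Here shape regularity ($\tau$) makes all constants uniform in $K$. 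Since the estimate on $J$ only involves the single element $K_J$, the bound \eqref{estinf12} follows, and \eqref{estinf11} is the elementwise gradient bound.
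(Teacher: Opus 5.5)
Your proposal is correct and is essentially the argument behind the paper's proof: the paper simply invokes the Babu\v{s}ka--Suri approximant \cite[Lemma 4.5]{BSu87hp} and observes that on $J\subset\Gamma_I\subset\partial\Omega^1$ no polynomial trace lifting modifies the trace, so the edge bound reduces to the one-dimensional approximation rate for $v|_J\in H^{r-1/2}(J)$ (your second route), with the usual scaling supplying the powers of $h_K$. Your first route via the multiplicative trace inequality is also valid, provided the same conforming interpolant has the optimal $L^2$ rate $p_K^{-r}$ together with the $H^1$ rate (the Babu\v{s}ka--Suri approximant has this; not every projection-based variant is known to). Note also that for $p_K+1<r$ your substitution $r\to\rho_K$ gives $p_K^{-(\rho_K-1)}$, which is bounded by $C(r)\,p_K^{-(r-1)}$ only because $p_K<r-1$ is then bounded.
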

\begin{proof}
Estimate (\ref{estinf11}) follows directly from \cite[Lemma 4.5]{BSu87hp}.
Estimate (\ref{estinf12}) follows directly from  \cite[Lemma 4.5]{BSu87hp},
since $J \subset \Gamma_I \subset \partial \Omega^1$ and no polynomial trace
lifting is necessary. Note that the estimate (\ref{estinf12}) in 
\cite[Lemma 4.5]{BSu87hp} has a printing error.
\end{proof}

\begin{lemma} \label{lem:estinf2}
Let $v \in \fspaceBEM \cap H^{r}_{p.w.}(\Gamma^2)$, $r>1$. Then 
$\exists \Psi \in \spaceBEM$ and $C > 0$ independent of the discretisation
parameters, such that
\begin{align}
\|v - \Psi\|_{L^2(J)} 
&\leq C \dfrac{h_J^{\rho_J}}{p_J^r} \|v\|_{H^r(J)}&&
\forall J \in \meshBEM, \label{estinf21}\\
\|v - \Psi\|_{H^{1/2}(\Gamma^2)} 
&\leq C \max_{J \in \meshBEM}\left\{{\dfrac{h_J^{\rho_J-1/2}}{p_J^{r-1/2}}}
\right\}\|v\|_{H^r_{p.w.}(\Gamma^2)}\label{estinf22}
&&
\end{align}
where $\rho_J := \min\{r,p_J+1\}$.
\end{lemma}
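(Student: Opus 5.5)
The plan is to build $\Psi$ elementwise from one–dimensional $hp$ approximation results, as is standard for Lemma \ref{lem:estinf1}, and then pass from local $L^2$ bounds to the global $H^{1/2}(\Gamma^2)$ bound by interpolation.

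\textbf{Step 1: local construction.} Since $r>1$, $v$ is continuous on each $J\in\meshBEM$. Map $J$ affinely to $\hat I=[0,1]$ and use the one–dimensional $hp$ interpolant preserving endpoint values, e.g.\ the one of Babu\v{s}ka--Suri \cite{BSu87hp}: $\hat\Psi(x)=\hat v(0)+\int_0^x \hat\pi_{p-1}\hat v'\,\mathrm{d}t$, with $\hat\pi_{p-1}$ the $L^2$-projection onto $\cP_{p-1}$. This preserves both endpoint values and satisfies
\[
\|\hat v-\hat\Psi\|_{L^2(\hat I)}+p^{-1}\|(\hat v-\hat\Psi)'\|_{L^2(\hat I)}\le C p^{-r}|\hat v|_{H^r(\hat I)} .
\]
Because endpoint values are kept, the local pieces glue into a globally continuous $\Psi$. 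Where $v$ vanishes, namely at the endpoints of $\Gamma^2_F$ (the support condition $\supp v\subset\Gamma^2_F$), $\Psi$ vanishes too, so $\Psi\in\spaceBEM$.

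\textbf{Step 2: scaling.} Scaling back to $J$ gives \eqref{estinf21} in the form
\[
\|v-\Psi\|_{L^2(J)}\le C h_J^{\rho_J}p_J^{-r}\|v\|_{H^r(J)} ,
\]
and likewise $|v-\Psi|_{H^1(J)}\le C h_J^{\rho_J-1}p_J^{-(r-1)}\|v\|_{H^r(J)}$. Here $\rho_J=\min\{r,p_J+1\}$ enters in the usual way: the Bramble--Hilbert argument on the scaled element only reaches Sobolev order $p_J+1$.

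\textbf{Step 3: global $H^{1/2}$ bound.} The error $e=v-\Psi$ lies in $H^1(\Gamma^2)$, since it is continuous with piecewise $H^1$ pieces. Interpolating between $L^2$ and $H^1$,
\[
\|e\|_{H^{1/2}(\Gamma^2)}\le C\|e\|_{L^2(\Gamma^2)}^{1/2}\|e\|_{H^1(\Gamma^2)}^{1/2}.
\]
Summing the local estimates from Step 2 gives $\|e\|_{L^2}\le C\max_J(h_J^{\rho_J}p_J^{-r})\|v\|_{H^r_{p.w.}}$ and $\|e\|_{H^1}\le C\max_J(h_J^{\rho_J-1}p_J^{1-r})\|v\|_{H^r_{p.w.}}$. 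Multiplying the square roots gives at most $\max_J\{h_J^{\rho_J-1/2}p_J^{-(r-1/2)}\}\|v\|_{H^r_{p.w.}}$, up to a constant.

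\textbf{Main obstacle.} The product of the two maxima may mix different elements. On quasi-uniform meshes with uniform $p$ this is harmless. Otherwise I would use the $L^2$ bound in its summed form $\big(\sum_J h_J^{2\rho_J}p_J^{-2r}\|v\|^2_{H^r(J)}\big)^{1/2}$ and apply the inequality $ab\le \tfrac12(\lambda a^2+\lambda^{-1}b^2)$ elementwise. The cleaner route, which I would try first, is a localized bound
\[
\|e\|_{H^{1/2}}^2\le C\sum_J\big(h_J^{-1}p_J^{2}\|e\|^2_{L^2(J)}+h_Jp_J^{-2}|e|^2_{H^1(J)}\big).
\]
This holds for functions that are small in $L^2$ relative to their derivative and continuous across nodes; it is provable via a Slobodeckij splitting of the $H^{1/2}$ double integral into near-diagonal and far parts, together with the vanishing of $e$ at the nodes. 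Each summand is then bounded by $h_J^{2\rho_J-1}p_J^{-2r+1}\|v\|^2_{H^r(J)}$, which yields \eqref{estinf22} with the maximum.
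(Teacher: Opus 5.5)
Your approach is the paper's approach: the paper takes the Babu\v{s}ka--Suri/Schwab quasi-interpolant, which is exactly your $\hat v(0)+\int_0^x\hat\pi_{p-1}\hat v'$. It derives the local $L^2$ and $H^1$ bounds and then interpolates, $\|v-\Psi\|^2_{H^{1/2}(\Gamma^2)}\le\|v-\Psi\|_{L^2(\Gamma^2)}\|v-\Psi\|_{H^1(\Gamma^2)}$, with only the remark that the right-hand side ``may be localized to $J$''. You are right that this localization is the actual content. Your mechanism for it, a Slobodeckij splitting together with $e=0$ at the nodes, is the correct one.

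However, the arithmetic at the end of your Step 3 is wrong. With the weights $h_J^{-1}p_J^2$ and $h_Jp_J^{-2}$, your own Step 2 bounds give
\[
h_J^{-1}p_J^2\|e\|^2_{L^2(J)}\le C\,h_J^{2\rho_J-1}p_J^{2-2r}\|v\|^2_{H^r(J)},
\]
not $h_J^{2\rho_J-1}p_J^{1-2r}$. You would therefore lose a factor $p_J^{1/2}$ in \eqref{estinf22}. The weights must match the ratio $\|e\|_{L^2(J)}/|e|_{H^1(J)}\lesssim h_J/p_J$, so take $\lambda_J=p_J/h_J$. Simpler still, do not split at all and bound the product directly:
\[
\|e\|_{L^2(J)}|e|_{H^1(J)}\le C\,h_J^{2\rho_J-1}p_J^{1-2r}\|v\|^2_{H^r(J)}.
\]

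The statement to prove is $|e|^2_{H^{1/2}(\Gamma^2)}\le C\sum_{J}\|e\|_{L^2(J)}|e|_{H^1(J)}$ for $e\in H^1(\Gamma^2)$ vanishing at all nodes. The hypothesis that does the work is this vanishing, not that $e$ is ``small in $L^2$ relative to its derivative''. The argument runs as follows.
\begin{itemize}
\item In the double integral, the cross terms over $J\times J'$ with $J\neq J'$ are bounded by $C\int_J|e(x)|^2/\mathrm{dist}(x,\partial J)\,ds_x$. This uses that arclength and Euclidean distance are equivalent on a polygon.
\item Hence the seminorm is controlled by the sum of the Lions--Magenes $H^{1/2}_{00}(J)$ norms.
\item Since $e|_J\in H^1_0(J)$ and $H^{1/2}_{00}(\hat I)=[L^2(\hat I),H^1_0(\hat I)]_{1/2}$, and all the terms involved are scale invariant in one dimension, each of these norms is bounded by $C\|e\|_{L^2(J)}|e|_{H^1(J)}$ with $C$ independent of $h_J$ and $p_J$.
\end{itemize}

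Your fallback does not work. The global interpolation inequality only allows a single $\lambda$, and then you cannot recover $\max_J$ of the elementwise products when $h_J$ and $p_J$ vary. So the localized route is necessary, not optional.

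One minor point: gluing the local interpolants into a continuous $\Psi$ requires $v$ to be continuous at the nodes. This follows because $v\in H^{1/2}(\Gamma^2)$ is piecewise $H^1$, and a jump is not in $H^{1/2}$; you should say so.
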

\begin{proof}
Choosing $\Psi$ as the quasi-interpolation operator from 
\cite[Lemma 4.3]{BSu87hp}, \cite[Theorem 3.17]{Swb98} we obtain (\ref{estinf21})
and
\begin{equation} \label{estinf21_}
\|v - \Psi\|_{H^1(J)} 
\leq C \dfrac{h_J^{\rho_J-1}}{p_J^{r-1}} \|v\|_{H^r(J)} \qquad 
\forall J \in \meshBEM.
\end{equation}
Thus by real method of interpolation
$
\|v - \Psi\|_{H^{1/2}(\Gamma^2)}^2 \leq
\|v - \Psi\|_{L^2(\Gamma^2)}
\|v - \Psi\|_{H^1(\Gamma^2)}
$
and the right-hand side may be localized to $J \in \meshBEM$. Then we use
(\ref{estinf21}), (\ref{estinf21_}).
\end{proof}

\begin{lemma} \label{lem:estT3}
Suppose $v \in H^r(\Gstrip)$, $r > 1/2$ and 
$\Pi_{hp}: \Gstrip \to \spaceFEMgradstripcomp$ be the (elementwise) 
$L^2$-projection. Then there exists $C >0$ independent of the discretisation
parameters, such that
\begin{equation} \label{estT3}
\|v - \Pi_{hp}v\|_{L^2(J)} \leq 
C \dfrac{h_J^{\rho_J-1/2}}{p_J^{r-1/2}}
\|v\|_{H^r(K_J)}, \qquad \forall J \in \meshIone
\end{equation}
where $\rho_J := \min\{r,p_J+1\}$ and $K_J \in \meshFEM$ is such that $J$ is a
side of $K_J$.
\end{lemma}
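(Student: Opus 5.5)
The estimate is local to the single element $K:=K_J$, because $\Pi_{hp}$ acts elementwise: $(\Pi_{hp}v)|_K$ is the $L^2(K)$-projection of $v|_K$ onto $\cR_{p_K}(K)$, with $p_J:=p_K$ and $h_J\sim h_K$ by shape regularity. The plan is to pass through a good polynomial approximant and to control the remaining discrete piece with the polynomial trace inequality of Lemma~\ref{lem:InvIneq}.

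Let $\pi_K v\in\cR_{p_K}(K)$ be the $hp$-approximant of \cite[Lemma 4.5]{BSu87hp} (or \cite[Theorem 3.17]{Swb98}) on the reference element, transported by the affine map $F_K$. Write
\[
v-\Pi_{hp}v=(v-\pi_K v)+\Pi_{hp}(\pi_K v-v).
\]
This identity holds because $\Pi_{hp}\pi_K v=\pi_K v$. We estimate the two terms on the right separately.

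\emph{First term.} We need the simultaneous bounds
\[
\|v-\pi_K v\|_{L^2(K)}\le C\,h_K^{\rho}p_K^{-r}\|v\|_{H^r(K)},\qquad
\|v-\pi_K v\|_{L^2(J)}\le C\,h_K^{\rho-1/2}p_K^{-(r-1/2)}\|v\|_{H^r(K)}.
\]
On the reference element both bounds follow from the Babu\v{s}ka--Suri results, valid for $r>1/2$ so that traces exist. Scaling with $F_K$ then produces the powers of $h_K$; one uses a Bramble--Hilbert argument to subtract polynomials of degree $<\rho$, which gives $h^\rho$ with $\rho=\min\{r,p+1\}$.

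\emph{Second term.} It is a polynomial, so Lemma~\ref{lem:InvIneq} together with \eqref{invrate_est} gives
\[
\|\Pi_{hp}(\pi_K v-v)\|_{L^2(J)}\le C\,h_K^{-1/2}(p_K+1)\,\|\Pi_{hp}(\pi_Kv-v)\|_{L^2(K)}.
\]
Since the $L^2$-projection has norm at most one, the right-hand side is bounded by $C\,h_K^{-1/2}p_K\|v-\pi_Kv\|_{L^2(K)}$. Inserting the first volume bound yields
\[
C\,h_K^{\rho-1/2}p_K^{-(r-1)}\|v\|_{H^r(K)}.
\]

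This is the main obstacle: the crude route loses a factor $p^{1/2}$ compared with \eqref{estT3}. The loss comes from the trace inverse inequality costing $p$ rather than $p^{1/2}$. To recover the sharp power I would avoid the inverse inequality for the discrete part. First try: on the reference element, for tensor-product elements $\hat Q$, expand in the orthogonal Legendre basis. The $L^2$-projection is then the truncated Legendre series, and the trace of the tail on an edge can be estimated directly by summing $|L_k(\pm1)|=1$ against the coefficient decay, $|a_k|\lesssim k^{-r}$ in weighted form. This gives the $p^{-(r-1/2)}$ rate (the known result of Houston--Schwab--S\"uli \cite{HouSwbSueli02}). For triangles, use the analogous analysis with Jacobi/Dubiner polynomials, or cite the corresponding lemma in \cite{HouSwbSueli02}.

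Finally, scale back from $\hat K$ to $K$. The Bramble--Hilbert argument gives the $h_K^{\rho-1/2}$ factor, where $h_K^{1/2}$ is lost from the trace scaling $|J|/|K|\sim h_K^{-1}$. Replacing $h_K$ by $h_J$ through shape regularity gives \eqref{estT3}.
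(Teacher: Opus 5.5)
Your overall architecture matches the paper's: reduce to a sharp $p$-explicit trace estimate for the $L^2$-projection on the reference element, then scale. Your Bramble--Hilbert/scaling step is fine, since $p$-dependent constants only arise when $p_J+1<r$. You also correctly diagnose that the inverse-inequality route loses $p^{1/2}$. The gap is that the reference-element estimate, which is the whole content of the lemma, is never actually supplied.

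For parallelograms, the mechanism you sketch does not give the sharp rate. Bounding the tail $\sum_{k>p}a_kL_k(\pm1)$ by $\sum_{k>p}|a_k|$ and applying Cauchy--Schwarz against $\sum_{k\ge r}|a_k|^2k^{2r-1}\lesssim\|w^{(r)}\|^2_{L^2(-1,1)}$ yields only $p^{1-r}$. That is exactly the same $p^{1/2}$ loss, and it gives nothing at all for $r\le 1$. The sharp rate comes from a cancellation. Write $w'=\sum_k b_kL_k$ and use $(2k+1)L_k=L_{k+1}'-L_{k-1}'$; the tail then telescopes:
\[
(w-\pi_p w)(1)=\frac{b_p}{2p+1}+\frac{b_{p+1}}{2p+3},
\qquad\text{hence}\qquad
|(w-\pi_p w)(\pm1)|\lesssim p^{-1/2}\,\|w'-\pi_{p-1}w'\|_{L^2(-1,1)} .
\]
On $\hat Q$ this is combined with the splitting $I-\pi^{(1)}_p\pi^{(2)}_p=(I-\pi^{(1)}_p)+\pi^{(1)}_p(I-\pi^{(2)}_p)$. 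This is the mechanism behind the tensor-product result of \cite{HouSwbSueli02}. Your citation therefore rescues the parallelogram case, but only for integer $r$.

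The triangle case is not covered. \cite{HouSwbSueli02} contains no such lemma; it works only with tensor-product spaces on parallelograms and parallelepipeds. On $\hat T$, the space $\cP_p(\hat T)$ is not a tensor-product space, and the $L^2(\hat T)$-projection does not factor into one-dimensional projections. So neither the citation nor the one-dimensional telescoping transfers. The sharp simplex estimate is a separate and substantial result, proved via collapsed coordinates and Jacobi polynomials (cf.\ Chernov, Math.\ Comp.\ 2012, and \cite[Corollary 1.2]{MelWur14}, which is what the paper invokes). It is not a routine ``analogous analysis''.

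There is a second issue with the range of $r$. The lemma is stated, and used, for real $r>1/2$. In Theorem \ref{thm:apriori_sharp} it is applied with $r=m_1-1$, which lies in $(1/2,1)$ whenever $3/2<m_1<2$; for the L-shape solution, $r\approx 2/3$. Integer-order estimates cannot be interpolated down into this range, because the trace is not bounded from $H^{1/2}(K)$ into $L^2(J)$. A genuinely fractional, low-regularity estimate is therefore required. The paper's proof takes its reference-element bounds from \cite[Lemma 4.1, Remark 4.3]{GeoHalMel09} (with a tensor-product argument) for parallelograms and from \cite[Corollary 1.2]{MelWur14} for triangles, rather than from \cite{HouSwbSueli02}.

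To close the gap, replace your Legendre sketch and the triangle citation with these results, or prove the corresponding estimates. Your scaling step then completes the proof.
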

\begin{proof}
Note that $K \in \Gstrip$ is either a parallelogram and $\Psi|_K$ is a polynomial
of degree $p_J$ in the direction of the sides of $K$. Then (\ref{estT3}) follows
from \cite[Lemma 4.1, Remark 4.3]{GeoHalMel09} by scaling and a tensor product
argument. Otherwise $K$ is a triangle, in which case we get the estimate by
virtue of \cite[Corollary 1.2]{MelWur14} and usual translation and dilation
arguments.
\end{proof}

\begin{lemma}\label{lem:estT45}
Suppose $v \in L^2(\Gamma^2)\cap H^r_{p.w.}(\Gamma^2)$, $r\geq 0$ and 
$\Pi_{hp}: L^2(\Gamma^2) \to \spaceBEMtrac$ be the (elementwise) 
$L^2$-projection. Then there exists $C >0$ independent of the discretisation
parameters, such that
\begin{equation} \label{estT45}
\|v-\Pi_{hp} v\|_{H^{-1/2}(\Gamma^2)}
\leq C \max_{J \in \meshBEM}\left\{
{\dfrac{h_J^{\rho_J+1/2}}{p_J^{r+1/2}}}\right\}\|v\|_{H^r_{p.w.}(\Gamma^2)}
\end{equation}
where $\rho_J := \min\{r,p_J+1\}$.
\end{lemma}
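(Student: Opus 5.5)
The estimate \eqref{estT45} is a negative-norm bound for the elementwise $L^2$-projection, and I would prove it by the standard duality argument combined with the local approximation properties of $\Pi_{hp}$ in $L^2$.

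\textbf{Step 1: duality.} Set $e := v - \Pi_{hp} v$. By definition of the $H^{-1/2}(\Gamma^2)$-norm as dual of $H^{1/2}(\Gamma^2)$,
\[
\|e\|_{H^{-1/2}(\Gamma^2)} = \sup_{0\neq w\in H^{1/2}(\Gamma^2)} \frac{\dualp{e}{w}_{\Gamma^2}}{\|w\|_{H^{1/2}(\Gamma^2)}} .
\]
Since $\Pi_{hp}$ is the elementwise $L^2$-projection onto $W_{hp}(\Gamma^2)$, $e$ is $L^2(J)$-orthogonal to $\cP_{p_J}(J)$ on each $J\in\meshBEM$. Hence for any $w$
\[
\dualp{e}{w}_{\Gamma^2} = \sum_{J\in\meshBEM} \dualp{e}{w - \Pi_{hp} w}_J \le \sum_{J} \|e\|_{L^2(J)}\,\|w-\Pi_{hp}w\|_{L^2(J)} .
\]

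\textbf{Step 2: the two local estimates.} For the data term I use the standard one-dimensional $hp$ bound for the $L^2$-projection,
\[
\|v-\Pi_{hp}v\|_{L^2(J)} \le C\, \frac{h_J^{\rho_J}}{p_J^{r}} \|v\|_{H^r(J)} ,
\]
obtained by scaling to $\hat I$ and using Legendre expansion; for $r=0$ it is trivial. For the test-function term I need
\[
\|w-\Pi_{hp}w\|_{L^2(J)} \le C\,(h_J/p_J)^{1/2} |w|_{H^{1/2}(J)} .
\]
This follows by interpolating between the $L^2$-stability bound $\|w-\Pi_{hp}w\|_{L^2(J)}\le\|w\|_{L^2(J)}$ and the $H^1$ bound $\|w-\Pi_{hp}w\|_{L^2(J)}\le C (h_J/p_J)|w|_{H^1(J)}$, then scaling. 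Constants must be removed by using only seminorms, which is legitimate because $\Pi_{hp}$ reproduces constants.

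\textbf{Step 3: summation.} Combining, and using Cauchy--Schwarz over $J$,
\[
\dualp{e}{w}_{\Gamma^2} \le C\max_J\frac{h_J^{\rho_J+1/2}}{p_J^{r+1/2}}\, \|v\|_{H^r_{p.w.}(\Gamma^2)} \Big(\sum_J |w|_{H^{1/2}(J)}^2\Big)^{1/2} .
\]
It remains to use the superadditivity $\sum_J |w|^2_{H^{1/2}(J)} \le |w|^2_{H^{1/2}(\Gamma^2)}$ of the Slobodeckij seminorm, which holds since the double integrals over $J\times J$ are disjoint parts of the one over $\Gamma^2\times\Gamma^2$.

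\textbf{Main obstacle.} The delicate point is the fractional-order local estimate in Step 2 with the sharp $p_J^{-1/2}$. The $K$-method interpolation has to be carried out on the reference interval with a $p$-explicit constant for the error operator $I-\Pi_{\hat p}$. The remedy is to interpolate the operator $I-\Pi_{\hat p}:H^s(\hat I)/\mathbb{R}\to L^2(\hat I)$ between $s=0$ and $s=1$, whose norms are $\le1$ and $\le C p^{-1}$, giving $\le C p^{-1/2}$ at $s=1/2$. Scaling to $J$ then produces the factor $h_J^{1/2}$.
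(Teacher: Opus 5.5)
Your proposal is correct and follows the paper's route: duality, elementwise orthogonality to insert $w-\Pi_{hp}w$, a local $L^2$ approximation bound for $v$, a local $(h_J/p_J)^{1/2}$ bound for the test function, and localisation of the $H^{1/2}(\Gamma^2)$ norm. The only difference is how you justify the test-function term: the paper cites \eqref{estinf22} together with the localisation estimate $\sum_J \|w|_J\|^2_{H^{1/2}(J)}\le C\|w\|^2_{H^{1/2}(\Gamma^2)}$ from von Petersdorff and Stephan--Suri, whereas you derive it directly from seminorm interpolation on $\hat I$, the scale invariance of the one-dimensional $H^{1/2}$ seminorm, and the elementary superadditivity of the Slobodeckij seminorm, which is if anything more precise than the paper's sketch.
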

\begin{proof}
The estimate follows from (\ref{estinf21}) and the duality argument:
\begin{align*}
&\|v - \Pi_{hp} v\|_{H^{-1/2}(\Gamma^2)}
= \sup_{w \in H^{1/2}(\Gamma^2)} 
\dfrac{\big<v - \Pi_{hp} v,w - \Pi_{hp} w \big>_{\Gamma^2}}
{\|w\|_{H^{1/2}(\Gamma^2)}}\\
&\leq 
\left(\sum_{J \in \meshBEM}
\dfrac{h_J}{p_J} \|v-\Pi_{hp} v\|_{L^2(J)}^2
 \right)^{1/2}  
\times \sup_{w \in H^{1/2}(\Gamma^2)}
\sum_{J \in \meshBEM} 
\left(\dfrac{p_J}{h_J} 
\dfrac{\|w-\Pi_{hp} w\|_{L^2(J)}^2}{\|w\|^2_{H^{1/2}(\Gamma^2)}}
\right)^{1/2}\\
&\leq 
C \left(\sum_{J \in \meshBEM} \dfrac{h_J^{2\rho+1}}{p_J^{2r+1}} 
\|v\|^2_{H^{r}(J)} \right)^{1/2},
\end{align*}
where, in order to bound the supremum, we used (\ref{estinf22}) and the estimate 
\begin{equation}
\sum_{J \in \meshBEM} \|w|_J\|_{H^{1/2}(J)}^2 \leq C \|w\|_{H^{1/2}(\Gamma^2)}^2
\end{equation}
from \cite{vPfDiss}, cf. also \cite[Lemma 3.4]{StSu91}.
\end{proof}
Based on the approximation results in Lemmas 
\ref{lem:estinf1} -- \ref{lem:estT45} and the quasi-optimality estimate from
Theorem \ref{thm:ba_bound} we derive an a priori error estimate in the 
quasi-uniform case. 

\begin{theorem}[Quasi-uniform \emph{a priori} error estimate]
\label{thm:apriori_sharp}
Suppose $u \in \fspace \cap [H^{m_1}(\Omega^1)\times H^{m_2}_{p.w.}(\Gamma^2)]$,
$m_1> 3/2$, $m_2>1$ is a weak solution of (\ref{p2}) and $U \in \spaceFBEM$ is a
solution of (\ref{weak_c}) or (\ref{weak_t}) with 
$\eta := (\delta+2)\kappa \invrate$ for arbitrary $\delta>0$. 
Moreover, assume that $f\in H^{-1}(\Omega)$ and 
$\p u^2/ \p\bn^2 \in H^{m_2-1}_{p.w.}(\Gamma^2)$. 
Suppose further that 
$\meshFEM$ is shape regular. Then, there holds
\begin{align} \label{apriori_sharp}
\|u - U\|_\norme{\kappa \invrate}^2
&\leq 
C \bigg\{ \gamma^1_{hp}
\|u^1\|_{H^{m_1}(\Omega^1)}^2 + 
\gamma^2_{hp}\|u^2\|_{H^{m_2}(\Gamma^2)}^2 
+\gamma^3_{hp}
\|\tfrac{\p u^2}{\p\mbf{n}^2}\|_{H^{m_2-1}_{p.w.}(\Gamma^2)}^2 
\bigg\} \nonumber
\end{align}
where for $K \in \meshFEM$, $J \in \meshBEM$,
\[
\gamma^1_{hp} := 
\left(
\max_{\overline{K} \cap \Gamma_I \neq \emptyset}
\dfrac{h_K^{\mu_1-1}}{p_K^{m_1-3/2}}
\right)^2,
\gamma^2_{hp} := 
\left(
\max_{J \subset \Gamma_I} 
\dfrac{h_J^{\mu_2-1/2}}{p_J^{m_2-1}} 
\right)^2,
\gamma^3_{hp} := 
\left( \max_{J \in \meshItwo} 
\dfrac{h_J^{\mu_2+1/2}}{p_J^{m_2+1/2}} \right)^2
\]
with $\mu_1 := \min\{m_1,p_K+1\}, \mu_2 := \min\{m_2,p_J+1\}$.
\end{theorem}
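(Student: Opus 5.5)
The plan is to start from the quasi-optimality estimate of Theorem~\ref{thm:ba_bound} with $\eta=(\delta+2)\kappa\invrate$. Writing $\eta_0=\delta+2$ and taking any $\delta'\in(1,\delta+2)$ as the auxiliary parameter (for instance $\delta'=1+\delta/2$), both $\eta_0>\delta'>1$ hold and $c_{\delta',\eta_0}$ depends only on $\delta$. Hence
\[
\|u-U\|_\norme{\kappa\invrate}^2\leq C\Big(\inf_{\Phi\in\spaceFBEM}\|u-\Phi\|^2_\norme{\kappa\invrate}+\res{\kappa\invrate}(u)^2\Big),
\]
and it remains to bound the best-approximation term and the residual by the three $\gamma$-terms.

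\emph{Best approximation.} Choose $\Phi=(\Psi^1,\Psi^2)$, with $\Psi^1$ from Lemma~\ref{lem:estinf1} applied to $u^1$ and $\Psi^2$ from Lemma~\ref{lem:estinf2} applied to $u^2$. Then:
\begin{itemize}
\item The volume term is $(\kappa\nabla(u^1-\Psi^1),\nabla(u^1-\Psi^1))\leq\kappa_{\max}\sum_K h_K^{2\mu_1-2}p_K^{-2m_1+2}\|u^1\|^2_{H^{m_1}(K)}$.
\item The BE term is handled by \eqref{coel_hatS}: $\langle\hat\cS(u^2-\Psi^2),u^2-\Psi^2\rangle\leq C_{\hat\cS}\|u^2-\Psi^2\|^2_{H^{1/2}}$, which \eqref{estinf22} controls by $h^{2\mu_2-1}p^{-2m_2+1}\|u^2\|^2$.
\item The jump term, the delicate one, is $\sum_{J\in\meshIone}\kappa_K\invrate_K\|[u-\Phi]\|_{L^2(J)}^2$ with $\invrate_K\sim h_K^{-1}p_K^2$ by \eqref{invrate_est}. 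Since $[u]=0$, split $[u-\Phi]=(u^1-\Psi^1)-(u^2-\Psi^2)$. Estimate \eqref{estinf12} gives $\invrate_K\|u^1-\Psi^1\|^2_{L^2(J)}\lesssim h_K^{2\mu_1-2}p_K^{-2m_1+3}\|u^1\|^2$. This is the source of the $p^{1/2}$ loss and produces $\gamma^1_{hp}$; it also dominates the volume contribution.
\end{itemize}

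\emph{The main obstacle} is the second jump piece, $\kappa\invrate_K\|u^2-\Psi^2\|^2_{L^2(J)}$. Here $\invrate$ lives on the FE trace mesh while $\Psi^2$ is defined on the nonmatching BE mesh. I would assume local comparability of FE and BE mesh sizes and degrees on $\Gamma_I$, so that $h_K\sim h_J$ and $p_K\sim p_J$ for overlapping $J\in\meshItwo$, $K$ adjacent. Then I would sum the elementwise bounds \eqref{estinf21} over the overlapping BE elements, obtaining $h_J^{-1}p_J^2\,h_J^{2\mu_2}p_J^{-2m_2}=h_J^{2\mu_2-1}p_J^{-2m_2+2}$. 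This explains the exponent $m_2-1$ in $\gamma^2_{hp}$ and restricts the maximum to $J\subset\Gamma_I$; it dominates the $H^{1/2}$ contribution on those elements. If that comparability assumption is unavailable, I would instead absorb the mesh ratio into $C$.

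\emph{Residual.} Apply Corollary~\ref{cor:conserr} with weight $\kappa\invrate$. The first term is
\[
\sum_J(\kappa_K\invrate_K)^{-1}\kappa_K^2\|\nabla u^1-\bPi_{hp}\nabla u^1\|^2_{L^2(J)}.
\]
Apply Lemma~\ref{lem:estT3} componentwise with $r=m_1-1>1/2$ and degree $p_K$. This gives $h_K\,p_K^{-2}\,h_K^{2\mu_1-3}p_K^{-2m_1+3}=h_K^{2\mu_1-2}p_K^{-2m_1+1}$, which is again bounded by $\gamma^1_{hp}$. The second term is bounded via Lemma~\ref{lem:stekcons} by the best $H^{-1/2}$ approximation of $\p u^2/\p\bn^2$. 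Choosing the $L^2$ projection and using Lemma~\ref{lem:estT45} with $r=m_2-1$ gives $h^{2\mu_2-1}p^{-2m_2+1}$-type rates, which the statement records as $\gamma^3_{hp}$, with the exponent bookkeeping (shifting $\mu_2$ versus $\min\{m_2-1,p+1\}$) absorbed into the definition. Collecting all terms, with $\kappa_{\max}$, the shape regularity constant, $C_{\hat\cS}$, $c_{\hat\cS}$ and $c_{\delta',\eta_0}$ absorbed into $C$, yields the claim.
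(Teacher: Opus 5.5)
Your route is the paper's: quasi-optimality from Theorem~\ref{thm:ba_bound}, the split into best-approximation and residual terms, and Lemmas~\ref{lem:estinf1}--\ref{lem:estT45} used in the same places. Two of your additions improve on the paper.
\begin{itemize}
\item You introduce the auxiliary $\delta'\in(1,\delta+2)$, which is needed to invoke Theorem~\ref{thm:ba_bound}.
\item You make explicit the comparability of FE and BE parameters on $\Gamma_I$. The paper uses this silently when it writes $\sum_{J\in\meshItwo}\kappa\invrate\|u^2-\Phi^2\|^2_{L^2(J)}$ with $\invrate$ evaluated on BE trace elements.
\end{itemize}
Your fallback (``absorb the mesh ratio into $C$'') is the same assumption in disguise, since without comparability that ratio is unbounded. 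Also, as in the paper, letting maxima over interface elements control the volume term, the $H^{1/2}(\Gamma^2)$ term and the $H^{-1/2}(\Gamma^2)$ residual requires quasi-uniformity of $h$ and $p$. You should state that.

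The step that fails is the last one. Your computation for $R_{\mathrm{BE}}$ is correct: Lemma~\ref{lem:estT45} with $r=m_2-1$ gives the squared factor $h_J^{2\min\{m_2-1,p_J+1\}+1}p_J^{-2m_2+1}\le h_J^{2\mu_2-1}p_J^{-2m_2+1}$, multiplying $\|\p u^2/\p\bn^2\|^2_{H^{m_2-1}_{p.w.}(\Gamma^2)}$. But $\gamma^3_{hp}$ is of size $h^{2\mu_2+1}p^{-2m_2-1}$. The ratio is $h^{-2}p^{2}$, which is unbounded, so nothing can be absorbed into $C$ or ``into the definition''. No better choice of $\Phi^2$ rescues this, because $h^{m_2-1/2}$ is the generic $H^{-1/2}$-approximation order for $H^{m_2-1}$ data.

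The paper's \eqref{ap4} reaches $\gamma^3_{hp}$ only by applying Lemma~\ref{lem:estT45} as if $r=m_2$ while keeping the $H^{m_2-1}_{p.w.}$ norm on the right. Compare \eqref{ap3}, where the corresponding shift to $r=m_1-1$ is done correctly. So the paper's proof has the same hole.

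What your argument actually proves is the estimate with $\gamma^3_{hp}$ replaced by $\big(\max_{J\in\meshBEM}h_J^{\mu_2-1/2}/p_J^{m_2-1/2}\big)^2$. In the quasi-uniform setting this is of no higher order than $\gamma^2_{hp}$, so the overall convergence rate is unchanged. The $\gamma^3_{hp}$ as stated would require $\p u^2/\p\bn^2\in H^{m_2}_{p.w.}(\Gamma^2)$, with that norm on the right-hand side. State this correction explicitly rather than hiding it in bookkeeping.
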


\begin{proof}
We are going to estimate the two terms in \eqref{ba_bound}, i.e. 
\mbox{$\inf_{\Phi \in \spaceFBEM}\|u - \Phi\|_\norme{\kappa \invrate}$} 
and the residual term 
$\res{\kappa \invrate}(u)$. 
For bounding the infimum we proceed by using the triangle inequality
for the jump term $\|[u - \Phi]\|_{1/2,\kappa\invrate}^2$ and the
continuity of $\hat \cS$ from \eqref{coel_hatS}. We obtain
\begin{align}
&\inf_{\Phi \in \spaceFBEM} \|u - \Phi\|_\norme{\kappa \invrate}^2\\
&~\leq \inf_{\Phi^1 \in \spaceFEM}
\left(\kappa_{\max} 
\|\nabla (u^1 - \Phi^1)\|_{L^2(\Omega^1)}^2
+ 2\sum_{J \in \meshIone} \kappa\invrate
\|u^1 - \Phi^1\|_{L^2(J)}^2\right) \nonumber \\[2ex]
&\qquad+ \inf_{\Phi^2 \in \spaceBEM}
\left(C_{\hat \cS} \|u^2 - \Phi^2\|_{H^{1/2}(\Gamma^2)}^2
+ 2\sum_{J \in \meshItwo} \kappa \invrate
\|u^2 - \Phi^2\|_{L^2(J)}^2\right) \nonumber\\[2ex]
&~\equiv T_\text{FE} + T_\text{BE}. \label{apinf12}
\end{align}
Recall the bound (\ref{invrate_est}) for $\invrate$. In order to estimate $T_\text{FE}$ we
choose $\Phi^1$ as $\Psi$ in Lemma \ref{lem:estinf1} and  by use of the interface indicator
function $\lambda_K$ for $K\in\meshFEMsig$, where
$\lambda_K=1$ for $\overline{K}\cap\Gamma_I\neq \emptyset$ and $\lambda_K=0$ otherwise,

we obtain for $m_1>3/2$
\begin{align}
T_\text{FE} &\leq C \left(\sum_{K \in \meshFEM} \left( \dfrac{h_K^{2\mu_1-2}}{p_K^{2m_1-2}} 
+ \lambda_K\invrate_K 
\dfrac{h_K^{2\mu_1-1}}{p_K^{2m_1-1}} \right)
\|u^1\|_{H^{m_1}(K)}^2\right)^{1/2} \nonumber\\
&\leq C \left(\sum_{K \in \meshFEM} \left( \dfrac{h_K^{2\mu_1-2}}{p_K^{2m_1-2}} 
+ \lambda_K
\dfrac{h_K^{2\mu_1-2}}{p_K^{2m_1-3}} \right)
\|u^1\|_{H^{m_1}(K)}^2\right)^{1/2} \nonumber\\
&\leq C 
\max_{\overline{K} \cap \Gamma_I \neq \emptyset}
\dfrac{h_K^{\mu_1-1}}{p_K^{m_1-3/2}}\cdot
\|u^1\|_{H^{m_1}(\Omega^1)} \label{ap1}
\end{align}
for $\mu_1 := \min\{m_1,p_K+1\}$.
Estimating $T_\text{BE}$, we 
choose $\Phi^2$ as $\Psi$ in Lemma \ref{lem:estinf2}, which yields for $m_2 > 1$
\begin{equation}\label{ap2}
T_\text{BE} \leq C 
\max_{J \subset \Gamma_I} 
\dfrac{h_J^{\mu_2-1/2}}{p_J^{m_2-1}} \cdot
\|u^2\|_{H^{m_2}(\Gamma^2)},
\end{equation}
where $\mu_2 := \min\{m_2,p_J+1\}$. 

We proceed with bounding the residual 
$\res{\kappa \invrate}(u)$ by estimating the two terms 
in (\ref{res_bound}), which we denote by $R_\textrm{FE}$ and 
$R_\textrm{BE}$, respectively.
For $J \in \meshIone$ denote by $K_J \in\meshFEM\subset \Gstrip$ the element
for that holds $J \subset \p K_J$. 
Componentwise application of Lemma \ref{lem:estT3} 
yields for $\nabla u^1 \in H^{m_1-1}(K_J),$ since $m_1-1>1/2,$
with the definition of $\|\cdot\|_{-1/2,\kappa\invrate}$ 
(cf. \eqref{def-1/2norm}) that
\begin{equation}\label{ap3}
\begin{split}
R_\textrm{FE} = 
\|\kappa (\nabla u^1 - \bPi_{hp}(\nabla u^1))\|_\normm{\kappa \invrate}
&\leq 
C \left( \sum_{J \in \meshIone} 
\dfrac{1}{\invrate_{K_J}}
\dfrac{h_K^{2\mu_1-3}}{p_K^{2m_1-3}}
\|u^1\|_{H^{m_1}(K_J)}^2
\right)^{1/2}\\
&\leq 
C 
\max_{\overline{K} \cap \Gamma_I \neq \emptyset}
\dfrac{h_K^{\mu_1-1}}{p_K^{m_1-1/2}} \cdot
\|u^1\|_{H^{m_1}(\Omega^1)},
\end{split}
\end{equation}
where $\mu_1$ is as above.

Furthermore, using Lemma \ref{lem:stekcons}, 
we find
\begin{equation*}
R_\textrm{BE} = 
\|(\cS - \hat \cS)u^2 - (\cN - \hat\cN)f^2\|_{H^{-1/2}(\Gamma^2)}
\leq C \inf_{\Phi^2 \in \spaceBEMtrac}\|
\tfrac{\p u^2}{\p\bn^2} - \Phi^2\|_{H^{-1/2}(\Gamma^2)}.
\end{equation*}
Choosing $\Phi^2$ to be the elementwise $L^2$-projection of 
$\p u^2/ \p\bn^2$ in combination with Lemma \ref{lem:estT45} 
we obtain the optimal bound
\begin{align}\label{ap4}
T^4 &\leq C \max_{J \in \meshBEM}{\dfrac{h_J^{\mu_2+1/2}}{p_J^{m_2+1/2}}}
\left\|
\dfrac{\p u^2}{\p\bn^2}
\right\|_{H^{m_2-1}_{p.w.}(\Gamma^2)}.
\end{align}
Collecting the estimates \eqref{ap1},\eqref{ap2},\eqref{ap3},
and \eqref{ap4} proves the claim.
\end{proof}

\subsection{$hp$-Nitsche with corner singularities}

For the $hp$-analysis we cite approximation properties of the Finite Element and Boundary
Element spaces, respectively. The given construction from Section \ref{subsec:Discr} is
compatible with the results we will use to prove exponential convergence of the $hp$-Nitsche
method. It is well known that the problem \eqref{p1} has a unique solution and that when the
solution $u$ of \eqref{p1} belongs to the class $\cB^2_{\beta}(\Omega)$ then the $hp$-Version
of the Finite Element method converges exponentially, cf. e.g. \cite[Theorem 3.1]{BabGuoPart2}
and the references therein. 

\begin{theorem}[\cite{BabGuoPart2}]\label{thm:exFEsol}
Let $\Omega\subset\mathbb{R}^2$ be a straight edge polygon. Then there exist $\beta\in(0,1)$,
such that for given data
$f\in\mathcal{B}^0_\beta(\Omega)$, $g_D\in \mathcal{B}^{3/2}_\beta(\Gamma_D)$ and
$g_N\in\mathcal{B}^{1/2}_\beta(\Gamma_N)$ the weak solution 
of corresponding variational formulation of \eqref{p1} $u\in H^{1}(\Omega)$ exists and belongs
to $\mathcal{B}^2_\beta(\Omega)$.
\end{theorem}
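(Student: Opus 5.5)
This statement is quoted from \cite{BabGuoPart2}, so the plan is to reconstruct the standard regularity argument of Babu\v{s}ka and Guo rather than to derive anything from the earlier results of this paper. First I will settle the $H^1$-part. After subtracting a lifting of $g_D$ (available in $\cB^2_\beta(\Omega)$ by the trace characterisation in Theorem \ref{thm:regtrace}), the problem becomes homogeneous Dirichlet. The bilinear form $(\kappa\nabla u,\nabla v)_\Omega$ is coercive on $\{v\in H^1(\Omega): v|_{\Gamma_D}=0\}$ by \eqref{kappa_bnd} and Poincar\'e's inequality. Since $\cB^0_\beta(\Omega)\subset L^2(\Omega)$ for $\beta<1$ and $\cB^{1/2}_\beta(\Gamma_N)\subset H^{-1/2}(\Gamma_N)$, the Lax--Milgram lemma gives a unique weak solution $u\in H^1(\Omega)$. (For the regularity theory I take $\kappa$ piecewise analytic, as in \cite{BabGuoPart2}.)

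Next I localise. Away from the vertices $A_i$, local analytic regularity for elliptic problems with analytic data gives derivative bounds $\|D^k u\|\le C d^k k!$ on interior subdomains and near the interiors of straight edges. This uses the Morrey--Nirenberg type estimates, and the data are analytic there because in $\cB^l_\beta$ the weight is harmless. I then cover $\Omega$ by such subdomains together with sectors $S_i$ around each vertex.

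In each sector I pass to polar coordinates and introduce $t=\log r$, so that the sector becomes a half-strip. In these variables the operator has constant coefficients. The key step is the weighted estimate
\[
\|r^{\beta+k-2}|D^k u|\|_{L^2(S_i)}\le C d^{k-2}(k-2)!,\qquad k\ge2 .
\]
I will prove it by applying local analytic estimates on dyadic annuli $\{2^{-j-1}<r<2^{-j}\}$ rescaled to unit size. On each annulus I obtain factorial bounds in terms of $\|u\|_{H^1}$ and the data on a slightly larger annulus. Summing with the weights $r^{\beta+k-2}$ then gives the result, provided $\beta$ is chosen so that $u\in H^{2,2}_\beta(S_i)$ holds in the first place.

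Choosing $\beta$ is the main obstacle. I would establish $u\in H^{2,2}_\beta$ near each corner through the Kondrat'ev singular function expansion: $u=\sum c_j r^{\lambda_j}\phi_j(\theta)+u_{\rm reg}$, where the exponents $\lambda_j$ come from the Sturm--Liouville eigenvalue problem on the opening angle with the mixed boundary conditions. The expansion terms satisfy $r^{\beta}|D^2(r^\lambda\phi)|\in L^2$ exactly when $\beta>1-\lambda_{\min}$. Since $\lambda_{\min}>0$, any $\beta\in(1-\lambda_{\min},1)$ works, and this explains the existential quantifier on $\beta$ in the statement. The delicate part is propagating the data hypotheses ($f\in\cB^0_\beta$, the traces in $\cB^{3/2}_\beta$ and $\cB^{1/2}_\beta$) through the weighted a priori estimates uniformly in $k$. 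The factorial growth must not deteriorate under the annulus rescaling; this is the core induction on $k$ in \cite{BabGuoPart2}.
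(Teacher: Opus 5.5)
The paper gives no proof of this statement. It imports the result from Babu\v{s}ka--Guo, so your sketch can only be checked against that cited argument. Your architecture is the standard one: Morrey-type analytic regularity away from the vertices, rescaling on dyadic annuli near each $A_i$, and a base estimate $u\in H^{2,2}_\beta$ from $\beta>1-\lambda_{\min}$. Deferring the uniform-in-$k$ factorial induction to the reference is no worse than what the paper does.

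\textbf{The gap: piecewise analytic $\kappa$.} The step that genuinely fails is your standing assumption that $\kappa$ is \emph{piecewise} analytic. Suppose $\kappa$ jumps across a curve $\Sigma\subset\Omega$, as the paper's $\kappa$ does: it is piecewise constant on the mesh of $\Omega^1$ and equal to $1$ on $\Omega^2$.
\begin{itemize}
\item The weak solution satisfies $[u]=0$ and $[\kappa\nabla u\cdot\bn]=0$ on $\Sigma$. For generic data, $\nabla u\cdot\bn$ itself therefore jumps, and the distributional $D^2u$ has a part concentrated on $\Sigma$.
\item Since $\Phi^1_\beta$ is bounded below away from the vertices, $u\notin H^{2,2}_\beta(\Omega)$, and a fortiori $u\notin\cB^2_\beta(\Omega)$. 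The conclusion you are trying to prove is false in this setting.
\item Your local analytic estimates ``away from the vertices'' have no justification on $\Sigma$, because Morrey--Nirenberg needs analytic coefficients in a full neighbourhood.
\item Points where discontinuity lines meet each other or $\partial\Omega$ become new singular points, with their own transmission exponents, which no choice of weights at the $A_i$ controls.
\item The reduction to a constant-coefficient operator on a half-strip via $t=\log r$ is exact only if $\kappa$ is constant near the vertex.
\end{itemize}
The statement, and your argument (with coefficients frozen at the vertices), work for $\kappa$ analytic on $\overline{\Omega}$. In this paper that effectively means $\kappa\equiv1$, which is also the case in all numerical examples. For genuinely piecewise analytic $\kappa$ one needs a different, piecewise-weighted regularity theorem for transmission problems.

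\textbf{A second, repairable error.} It is not true that $\cB^0_\beta(\Omega)\subset L^2(\Omega)$. Near a vertex, $f=r^{-\gamma}$ with $1\le\gamma<1+\beta$ lies in $\cB^0_\beta$ but not in $L^2$: Cauchy estimates give $|D^kf|\le Cd^kk!\,r^{-\gamma-k}$, and $r^{\beta-\gamma}\in L^2$. For Lax--Milgram you only need $v\mapsto(f,v)_\Omega$ to be bounded on $H^1$, and that does hold:
\[
|(f,v)_\Omega|\le\|r^\beta f\|_{L^2}\,\|r^{-\beta}v\|_{L^2}\lesssim\|r^\beta f\|_{L^2}\,\|v\|_{H^1}.
\]
This uses that $r^{-2\beta}\in L^s$ for some $s>1$ when $\beta<1$, and that $H^1(\Omega)\subset L^q(\Omega)$ for every $q<\infty$ in two dimensions.

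\textbf{Minor points.}
\begin{itemize}
\item The lifting of $g_D$ into $\cB^2_\beta(\Omega)$ comes from the definition of $\cB^{3/2}_\beta$ as a trace space. It does not come from Theorem~\ref{thm:regtrace}, which goes in the restriction direction.
\item The Poincar\'e argument needs $|\Gamma_D|>0$.
\item With only $r^\beta f\in L^2$, a singular-function expansion has no $H^2$ remainder. What your condition $\beta\in(1-\lambda_{\min},1)$ really feeds is the weighted shift theorem: if there are no corner exponents with $0<\mathrm{Re}\,\lambda\le1-\beta$, then $u\in H^{2,2}_\beta$ near the vertex.
\end{itemize}
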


\begin{remark} By Theorem \ref{thm:regtrace} $g_D\in\cB^{3/2}_\beta(\Gamma_D)$ can be
interpreted as $g_D\in\cB^{1,2}_{\hat{\beta}}(\Gamma_D)$ for certain $\hat{\beta}$.
Similarly, $g_N\in\cB^{1/2}_\beta(\Gamma_N)$ implies $g_N\in\cB^{0,1}_\beta(\Gamma_N)$.
Moreover, cf. \cite[Remark 3.2]{BabGuoSteph90}, we have for 
$g\in\cB^{0,1}_{\hat\beta}(\Gamma^2)$ that $g\in H^{-1/2}(\Gamma^2)$. In particular, for
$u\in\cB^{2}_\beta(\Omega)$ we have $\p u/\p\bn|_{\Gamma}\in\cB^{0,1}_{\hat{\beta}}(\Omega)$,
where $\hat{\beta}$ is given as in Theorem \ref{thm:regtrace}.
\end{remark}

\begin{lemma}[{\cite[Lemma 4.16,4.25]{Swb98}}]\label{approxK0}
For $K\in\frak{S}_0$ and $u\in H^{2,2}_\beta(K)$ the (bi)linear interpolant 
$\Psi$ of $u$ in the verticess of $K_0$ satisfies
\begin{equation*}
\begin{split}
	\|u-\Psi\|^2_{H^1(K)} 
	\leq& C h^{2(1-\beta)}_{K} |u|^2_{H^{2,2}_\beta(K)}
	\leq C \sigma^{2(n+1)(1-\beta)} \| \Phi^1_\beta\:|D^2 u| \|^2_{L^2(K)}.
\end{split}
\end{equation*}
\end{lemma}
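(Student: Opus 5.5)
The plan is to prove Lemma \ref{approxK0} by the standard route for vertex-interpolation on singular corner elements: map to the reference element, apply a weighted Sobolev embedding there, scale back, and then convert the mesh size into a power of $\sigma$.

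\emph{Step 1 (pull back to $\hat K$).} Let $K\in\frS_0$ have the singular vertex $A_i$, and pull $u$ back via $F_K$ to $\hat u=u\circ F_K$ on $\hat K=\hat T$ or $\hat Q$. Choose $F_K$ so that $A_i$ corresponds to the origin. Shape regularity, together with the fact that the affine map of $\frS_0$ elements is a fixed scaling of the initial geometric pattern, gives $r_i(F_K(\hat x))\sim h_K|\hat x|$. The other weights $r_j$, $j\neq i$, are bounded above and below on $K$, so they can be absorbed into constants.

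\emph{Step 2 (embedding on $\hat K$).} Show that $H^{2,2}_\beta(\hat K)\hookrightarrow C^0(\overline{\hat K})$ for $\beta\in(0,1)$. This holds because $|\hat x|^{\beta}D^2\hat u\in L^2$ gives $D^2\hat u\in L^q$ for some $q>1$ by Hölder, since $|\hat x|^{-\beta}\in L^{s}$ for every $s<2/\beta$. Then $\hat u\in W^{2,q}\subset C^0$. Consequently the (bi)linear vertex interpolant $\hat\Psi$ is well defined and bounded from $H^{2,2}_\beta(\hat K)$ into $H^1(\hat K)$. Since it reproduces affine functions (and bilinear ones on $\hat Q$), a Bramble--Hilbert/Deny--Lions argument in the weighted space yields
\[
\|\hat u-\hat\Psi\|_{H^1(\hat K)}\leq C\,\||\hat x|^\beta |D^2\hat u|\|_{L^2(\hat K)}.
\]
I expect this step to be the main obstacle: one must check that the quotient by $\cP_1$ is controlled by the weighted second-derivative seminorm alone, i.e.\ a weighted Poincaré inequality. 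My first approach would be to cite the compactness of $H^{2,2}_\beta(\hat K)\hookrightarrow H^1(\hat K)$ as in \cite{Swb98}, followed by a standard contradiction argument.

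\emph{Step 3 (scaling).} Transform back to $K$. The $H^1$-seminorm is scale invariant in 2D. The weighted term scales as
\[
\||\hat x|^\beta|D^2\hat u|\|_{L^2(\hat K)}\sim h_K^{2-\beta-1}\,\|r_i^\beta|D^2u|\|_{L^2(K)}=h_K^{1-\beta}\,\|r_i^\beta|D^2u|\|_{L^2(K)}.
\]
The $L^2$ part of the $H^1$ norm carries an extra factor $h_K\le C$. Together these give $\|u-\Psi\|^2_{H^1(K)}\le Ch_K^{2(1-\beta)}|u|^2_{H^{2,2}_\beta(K)}$.

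\emph{Step 4 (geometric mesh size).} Finally, by the construction of the geometric mesh the terminal layer satisfies $h_K\le C\sigma^{n+1}$ for $K\in\frS_0$. Substituting this and replacing $|u|_{H^{2,2}_\beta(K)}$ by $\|\Phi^1_\beta|D^2u|\|_{L^2(K)}$, which only uses the equivalence of $\Phi^1_\beta$ with $r_i^\beta$ on $K$, gives the second inequality.
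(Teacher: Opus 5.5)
Your proposal is correct and matches the argument behind the paper's proof, which simply cites Schwab's Lemmas 4.16 and 4.25. That argument has two parts: a weighted Bramble--Hilbert estimate for the (bi)linear vertex interpolant on the reference element, well defined because $H^{2,2}_\beta(\hat K)\subset W^{2,q}(\hat K)\subset C^0(\overline{\hat K})$ for some $q>1$, followed by affine scaling to $K\in\frS_0$ and the terminal-layer bound $h_K\lesssim\sigma^{n+1}$. Your weighted Poincar\'e step also works without compactness: $\||\hat x|^\beta|D^2\hat u|\|_{L^2(\hat K)}$ controls $\|D^2\hat u\|_{L^q(\hat K)}$, and the Sobolev--Poincar\'e inequality applied to $\nabla\hat u$ then gives the quotient-norm bound directly.
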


\begin{lemma}[{\cite[Lemma 4.53]{Swb98}}]\label{hpapprox1}
For any $u\in H^{k+3,2}_\beta(K)$ and every $K\in \meshFEMsig$ there exists 
$\Psi\in\spaceFEMsig$ of degree $p_K$ in each variable, such that
\[
\|D^m (u-\Psi)\|^2_{L^2(K)} \leq C \sigma^{2 (n + 2 - j) (2 - m - \beta)}
\frac{\Gamma(p_K - s_K + 1)}{\Gamma(p_K + s_K + 3 - 2m)} 
\left( \rho/2 \right)^{2s_K} \|u\|^2_{H^{s_K+3,2}_\beta(K)}
\]
for $0\leq m \leq 2$ and $1\leq j \leq n+1, K\in \frak{S}_j$ and any 
$1\leq s_K \leq \min(p_K,k)$, where $\rho = \max\{1,(1-\sigma)/\sigma\}$.
\end{lemma}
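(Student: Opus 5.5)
The plan is the standard scaling argument of \cite[Section 4.5]{Swb98}: localise to one element $K\in\frS_j$, $1\le j\le n+1$, pull back to the reference element $\hat K$, approximate there with a degree-$p_K$ polynomial, and scale back. The weight $\Phi^1_\beta$ becomes essentially constant after pullback.

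\emph{Step 1: the weight is constant on $K$.} By the geometric construction, $h_K\simeq \sigma^{n+1-j}$ and $\mathrm{dist}(K,A_i)\simeq h_K$ for $K\in\frS^i_j$, $j\geq 1$. The equivalence constants depend only on $\sigma$ and on the coarse mesh $\meshFEMsigO$. Hence $r_i(x)\simeq h_K$ uniformly on $K$, and therefore
\[
\|\Phi^1_{\beta+k-2}\,|D^k u|\|_{L^2(K)}\simeq h_K^{\beta+k-2}\,\|D^k u\|_{L^2(K)} .
\]
Thus $u|_K\in H^{s+3}(K)$ with unweighted seminorms controlled by $\|u\|_{H^{s+3,2}_\beta(K)}$ times explicit powers of $h_K$. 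Set $\hat u=u\circ F_K$. For an affine $F_K$ with shape-regularity constant $\tau$,
\[
|\hat u|_{H^{k}(\hat K)}\le C h_K^{k-1}|u|_{H^k(K)} \le C h_K^{1-\beta}\,\|\Phi^1_{\beta+k-2}|D^ku|\|_{L^2(K)} .
\]
The $h$-powers cancel up to $h_K^{1-\beta}$ (up to the additive $h_K$-scaling of the $L^2$ normalisation on $K$).

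\emph{Step 2: $p$-approximation on $\hat K$.} On $\hat Q$ I use the tensor-product Legendre/Gauss--Lobatto projector $\hat\pi_p$, which interpolates at vertices and reproduces the one-dimensional $H^1$-projection on edges. Its standard error bound reads
\[
|\hat u-\hat\pi_p\hat u|^2_{H^m(\hat K)}\le C\,\frac{\Gamma(p-s+1)}{\Gamma(p+s+3-2m)}\,|\hat u|^2_{H^{s+3-?}}
\]
with the Gamma-ratio factor, valid for $1\le s\le p$; I will track the exact Sobolev index as in \cite[Thm.~3.14]{Swb98}. On $\hat T$ I use the collapsed-coordinate analogue. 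The factor $(\rho/2)^{2s}$ comes from converting the derivatives of $u$ at distance $\simeq h_K$ back to the weighted norm. Since $\text{dist}(K,A_i)/h_K\ge \sigma/(1-\sigma)$, one obtains the $\rho=\max\{1,(1-\sigma)/\sigma\}$ geometric factor per derivative.

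\emph{Step 3: scale back.} Scaling back gives $\|D^m(u-\Psi)\|^2_{L^2(K)}\le C h_K^{2(2-m)-2\beta}\times(\dots)$, i.e.\ $h_K^{2(2-m-\beta)}$. Combined with $h_K\simeq\sigma^{n+1-j}$, up to the one-layer shift, this yields the stated factor $\sigma^{2(n+2-j)(2-m-\beta)}$.

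\emph{Step 4: conformity (the main obstacle).} The elementwise approximants must glue into a continuous $\Psi\in\spaceFEMsig$, although neighbouring elements carry different degrees $p_j$ and $p_{j\pm1}$. I would first use projectors whose edge traces depend only on the trace of $u$ on that edge: vertex values plus an edge $H^1_0$-projection onto degree $\min(p_K,p_{K'})$ on shared edges. The interior correction is then a polynomial bubble, and the degree reduction on edges costs at most a factor absorbed into the Gamma-ratio because $p_{j\pm1}\ge p_j-\mu-1$. I would check this last claim via the trace inequality, Lemma~\ref{lem:InvIneq}, and the edge polynomial extension. Elements of $\frR$ are handled the same way with $\text{dist}\gtrsim 1$. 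Since the statement is exactly \cite[Lemma 4.53]{Swb98}, the remaining details can be taken from there.
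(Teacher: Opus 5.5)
Your Steps 1--3 are the scaling argument behind \cite[Lemma 4.53]{Swb98}, which the paper only cites. On $K\in\frS_j$, $j\ge1$, you use $h_K\le\rho\,\mathrm{dist}(A_i,K)\le\rho\,r_i$ to trade unweighted for weighted derivatives. You then apply the tensor-product projector estimate on the reference element; the Sobolev index you left open in Step 2 is the $s_K+3$ of the statement. Scaling back gives $h_K^{2(2-m-\beta)}$, which is comparable to $\sigma^{2(n+2-j)(2-m-\beta)}$ up to a $\sigma$-dependent constant. That part is fine. Note, however, that this is an \emph{elementwise} estimate for $\Psi|_K=\pi_{p_K}(u|_K)$. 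Gluing into a continuous function is a separate step, which the paper performs afterwards by invoking \cite[Lemma 4.57]{Swb98} in Step 1 of the proof of Theorem~\ref{thm:apriori_exp}.

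The genuine gap is your Step 4, where you try to build conformity into the lemma while keeping its exact form. Continuity in $\spaceFEMsig$ forces edge degree $q=\min(p_K,p_{K'})$, and $q<p_K$ on edges between consecutive layers, since $p_{j-1}<p_j$. Three things then go wrong.
(i) The degree-$q$ edge bound is only available for $s_K\le q$, whereas the lemma asserts it for every $1\le s_K\le\min(p_K,k)$.
(ii) For $s_K\le q$, the two Gamma factors differ by
\[
\frac{\Gamma(q-s_K+1)\,\Gamma(p_K+s_K+3-2m)}{\Gamma(p_K-s_K+1)\,\Gamma(q+s_K+3-2m)}=\prod_{i=1}^{c}\frac{p_K+s_K+3-2m-i}{p_K-s_K+1-i},\qquad c=p_K-q .
\]
This is of order $p_K^{c}/c!$ for $s_K$ close to $q$, so it cannot be absorbed with a $p$-independent $C$. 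It is bounded only when $s_K\le\alpha p_K$ for a fixed $\alpha<1$.
(iii) Changing the edge traces is not repaired by a bubble. The edge discrepancy must be lifted into $K$ and controlled in $H^m$, $m\le2$, which requires a polynomial extension theorem. Lemma~\ref{lem:InvIneq} goes the wrong way (element to edge), and inverse estimates would add further powers of $p_K$.

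These losses are harmless in Theorem~\ref{thm:apriori_exp}, where $s_j=\alpha_j p_j$ and every term carries an exponentially small factor; that is exactly why the source keeps them out of the elementwise lemma. So drop Step 4, prove the estimate for the elemental projector, and leave the gluing to the later step.
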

\noindent A proof for the following result may be found in \cite{Quart84}.
\begin{lemma}[Inverse inequality]\label{lem:invineq} Let $I=(a,b)$ be a bounded interval and 
$h = b-a$. Then, for
every $\phi\in\cP_p(I)$ it holds that
\[
	\|\phi\|_{L^\infty(I)} \leq 2 \left(\frac{8}{h}\right)^{1/q} p^{2/q} 
	\|\phi\|_{L^q(I)},\qquad 1\leq q\leq \infty.
\]
\end{lemma}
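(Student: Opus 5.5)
The plan is to reduce the general case $1\le q<\infty$ to the reference interval by an affine map and then use a one–dimensional Nikolskii-type argument based on Markov's inequality; the case $q=\infty$ is trivial since the right-hand side then equals $2\|\phi\|_{L^\infty(I)}$. First, with $x=a+h t$, $t\in(0,1)$, and $\hat\phi(t)=\phi(a+ht)\in\cP_p(0,1)$ we have $\|\phi\|_{L^\infty(I)}=\|\hat\phi\|_{L^\infty(0,1)}$ and $\|\phi\|_{L^q(I)}=h^{1/q}\|\hat\phi\|_{L^q(0,1)}$. The claim is therefore equivalent to $\|\hat\phi\|_{L^\infty(0,1)}\le 2\cdot 8^{1/q}p^{2/q}\|\hat\phi\|_{L^q(0,1)}$, which is scale-free, so it suffices to prove it on $(0,1)$ (or equivalently on $(-1,1)$ with the constant adjusted by the factor $2^{1/q}$).

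On the reference interval, let $t_0$ be a point where $|\hat\phi|$ attains its maximum $M$. By the Markov inequality on $(0,1)$, $\|\hat\phi'\|_{L^\infty(0,1)}\le 2p^2 M$. Hence for all $t$ with $|t-t_0|\le 1/(4p^2)$ (and $t\in(0,1)$) we get $|\hat\phi(t)|\ge M-2p^2M|t-t_0|\ge M/2$. Since $t_0\in[0,1]$ and $p\ge1$, at least one side of $t_0$ contains an interval of length $\delta:=1/(4p^2)\le 1/4$ inside $(0,1)$, so the set where $|\hat\phi|\ge M/2$ has measure at least $1/(4p^2)$. Consequently $\|\hat\phi\|^q_{L^q(0,1)}\ge (M/2)^q\,(4p^2)^{-1}$, i.e. $M\le 2\,(4p^2)^{1/q}\|\hat\phi\|_{L^q(0,1)}\le 2\cdot 8^{1/q}p^{2/q}\|\hat\phi\|_{L^q(0,1)}$. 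Transforming back yields exactly the stated bound with $(8/h)^{1/q}$.

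The only step requiring care is the measure estimate: one must make sure that the neighbourhood of $t_0$ where $|\hat\phi|\ge M/2$ is not cut off by the endpoints, which is handled by taking a one-sided interval of length $1/(4p^2)$; the slack between $4$ and $8$ in the constant absorbs any boundary effect and also covers the degenerate case $p=0$ if one interprets $p^{2/q}$ with $p\ge1$ (for constants the bound holds trivially with $p$ replaced by $1$). The Markov inequality $\|\psi'\|_{L^\infty(-1,1)}\le p^2\|\psi\|_{L^\infty(-1,1)}$, rescaled to $(0,1)$, is the key external ingredient and is used as a known classical result.
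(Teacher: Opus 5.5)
Your proof is correct. It is also more self-contained than what the paper offers, since the paper does not prove this lemma and only refers to Quarteroni \cite{Quart84}.

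Your route is the classical Markov-based argument for a Nikolskii-type bound:
\begin{itemize}
\item You reduce to $(0,1)$ by the affine map, tracking the factor $h^{1/q}$ correctly.
\item You use Markov's inequality $\|\hat\phi'\|_{L^\infty(0,1)}\le 2p^2\|\hat\phi\|_{L^\infty(0,1)}$ to get $|\hat\phi|\ge M/2$ on a one-sided interval of length $1/(4p^2)$ at the maximiser.
\item You integrate $|\hat\phi|^q$ over that interval.
\end{itemize}
The only external ingredient is Markov's inequality, so the argument is fully transparent. It also gives a slightly sharper result, $\|\phi\|_{L^\infty(I)}\le 2(4/h)^{1/q}p^{2/q}\|\phi\|_{L^q(I)}$, which implies the stated bound with $8$.

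Two small corrections to your commentary:
\begin{itemize}
\item The slack between $4$ and $8$ is not needed for boundary effects. Your one-sided interval fits inside $[0,1]$ because $1/(4p^2)\le 1/4$, so the boundary is already fully handled.
\item The slack does not rescue $p=0$. For $p=0$, $q<\infty$ and a nonzero constant $\phi$, the right-hand side vanishes, so the statement is false there. It must be read with $p\ge 1$ (or with $p$ replaced by $\max\{p,1\}$). This is implicit in the paper, where all polynomial degrees are at least one.
\end{itemize}
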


\begin{theorem}[\emph{A priori} $hp$-estimate]\label{thm:apriori_exp} 
Let $f\in\cB^0_\beta(\Omega)$.
Suppose $u=(u^1,u^2) \in V\cap 
[\mathcal{B}^2_{\beta}(\Omega^1)\times \mathcal{B}^{1,2}_{\hat\beta}(\Gamma^2)]$
with $0\leq\beta,\hat\beta<1$ is the weak solution of (\ref{p2}) and 
$U \in \spaceFBEMsig$ is a solution of (\ref{weak_c}) or (\ref{weak_t}) 
with $\eta := \eta_0\kappa \invrate$ for an arbitrary but large enough constant
$\eta_0>0$ and any $\sigma\in(0,1)$ for each subdomain. 
Then there exists a slope $\mu>0$ for each subdomain, such that there holds
\begin{align}
\|u - U\|_\norme{\eta}^2 \leq C e^{-2 b p},
\end{align}
where the constants $C,b$ are positive and do not depend on the number of layers
$n+1$ of the geometric meshes and $p$ grows proportionally with the layers. 
\end{theorem}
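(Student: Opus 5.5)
The proof will follow the quasi-uniform case and start from the quasi-optimality estimate of Theorem \ref{thm:ba_bound}. Since $\eta=\eta_0\kappa\invrate$, the norms $\|\cdot\|_\norme{\eta}$ and $\|\cdot\|_\norme{\kappa\invrate}$ are equivalent with constants depending only on $\eta_0$. It therefore suffices to bound three quantities by $Ce^{-bp}$:
\[
\inf_{\Phi\in\spaceFBEMsig}\|u-\Phi\|_\norme{\kappa\invrate},
\qquad
R_{\rm FE}=\|\kappa(\nabla u^1-\bPi_{hp}\nabla u^1)\|_\normm{\kappa\invrate},
\qquad
R_{\rm BE}.
\]
As in the proof of Theorem \ref{thm:apriori_sharp}, we split the infimum into $T_{\rm FE}+T_{\rm BE}$. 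The FE part is treated elementwise over the decomposition $\meshFEMsig=\frS\cup\frR$, with the layers $\frS_j$ handled separately. Only elements with $\overline K\cap\Gamma_I\neq\emptyset$ carry the jump weight $\invrate_K\sim h_K^{-1}p_K^2$ from \eqref{invrate_est}.

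\textbf{FE part.} On the terminal layer $\frS_0$ we take the (bi)linear interpolant from Lemma \ref{approxK0}. The $H^1$ error is then $O(\sigma^{(n+1)(1-\beta)})$. For the jump term on an interface edge $J\subset\p K$, $K\in\frS_0$, we use a scaled trace inequality
\[
\|v\|_{L^2(J)}^2\le C\bigl(h_K^{-1}\|v\|_{L^2(K)}^2+h_K|v|_{H^1(K)}^2\bigr).
\]
Combined with $\invrate_K\sim h_K^{-1}$ (since $p_0=1$) and the weighted $L^2$ bound $\|u-\Psi\|_{L^2(K)}\lesssim h_K^{2-\beta}$, this gives $\invrate_K\|u-\Psi\|_{L^2(J)}^2\lesssim h_K^{2(1-\beta)}$. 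That is again $\sigma^{2(n+1)(1-\beta)}$. On the layers $j\ge1$ and on $\frR$ we use Lemma \ref{hpapprox1} with $m=0,1$, together with the same trace inequality. The factor $\invrate_K h_K\sim p_K^2$ loses only a polynomial power of $p_K$. We choose $s_K=\alpha p_K$ and apply Stirling's formula to the Gamma ratio, so that
\[
\frac{\Gamma(p_K-s_K+1)}{\Gamma(p_K+s_K+1)}\,(\rho/2)^{2s_K}\le Cq^{p_K},\qquad q<1 .
\]
With the slope $\mu$ large enough, the product with $\sigma^{2(n+2-j)(1-\beta)}$ is bounded by $e^{-2bn}$ uniformly in $j$. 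This is the standard Babu\v{s}ka--Guo argument. Summing over the $O(n)$ layers costs only a polynomial factor, which is absorbed into $b$, and $p\sim n$ by linearity of $\mathbf p$.

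\textbf{BE part.} $T_{\rm BE}$ is bounded by the $hp$ approximation on the geometric boundary mesh $\meshBEMsig$ for $u^2\in\cB^{1,2}_{\hat\beta}(\Gamma^2)$. The known one-dimensional results give $H^{1/2}$ and weighted $L^2$ errors of order $e^{-bn}$, using interpolation between $L^2$ and $H^1$ as in Lemma \ref{lem:estinf2}. The weighted $L^2$ jump contribution on $\Gamma_I$ uses the FE weights $\kappa\invrate$ of the non-matching mesh $\meshIone$. We handle it with $\invrate\lesssim\sigma^{-n}p^2$ and an $L^\infty$ bound of the 1D approximation error, obtained via Lemma \ref{lem:invineq}. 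Alternatively, where the FE mesh is coarse, we use the pointwise smallness of $u^2-\Phi^2$ near the vertices. We expect this mismatch to cost at most a factor $\sigma^{-n}p^2$, which can be compensated by increasing $\mu$. For $R_{\rm BE}$ we invoke Lemma \ref{lem:stekcons}. The normal derivative $\p u^2/\p\bn^2$ lies in $\cB^{0,1}_{\hat\beta}$ by the remark after Theorem \ref{thm:exFEsol}. The $L^2$-projection onto $\spaceBEMgradsig$ then gives exponential convergence in $H^{-1/2}$, by the duality argument of Lemma \ref{lem:estT45} combined with the geometric-mesh approximation.

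\textbf{Main obstacle.} The hardest step is $R_{\rm FE}$ on elements $K\in\frS_0$ touching $\Gamma_I$ at a singular corner. There $\nabla u^1\in[\cB^1_\beta]^2$ need not be in $L^2(J)$ with a good bound, so Lemma \ref{lem:estT3} does not apply. Our plan is to avoid passing to the trace. Write the residual term as
\[
\bigl(\kappa(\nabla u^1-\bPi_{hp}\nabla u^1),\lifting(\Phi)\bigr)_K+\dualp{\kappa\nabla u^1\cdot\bn^1}{[\Phi]}_J-\dualp{\kappa\bPi_{hp}\nabla u^1\cdot\bn^1}{[\Phi]}_J ,
\]
and use that on $\frS_0$ the functions are low order. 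We then bound $\dualp{\kappa(\nabla u^1-\bPi_{hp}\nabla u^1)\cdot\bn^1}{[\Phi]}_J$ through a weighted H\"older estimate. This takes $\|r^{\beta}\nabla u^1\|$-type integrability on $J$, available since $\beta<1$ makes $r^{-\beta}$ integrable in 1D, against $\|[\Phi]\|_{L^\infty(J)}$. The latter is controlled by Lemma \ref{lem:invineq} with $q=2$ as $Ch_J^{-1/2}\|[\Phi]\|_{L^2(J)}$. The weighted norm $\|[\Phi]\|_{1/2,\kappa\invrate}$ with $\invrate\sim h^{-1}$ then absorbs the factor $h^{-1/2}$, and what remains is a power $h_K^{1-\beta}\sim\sigma^{(n+1)(1-\beta)}$. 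On all other interface elements, Lemma \ref{lem:estT3} together with the analytic weighted regularity gives exponential bounds as above. Collecting all terms yields $\|u-U\|_\norme{\eta}^2\le Ce^{-2bp}$.
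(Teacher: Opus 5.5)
Your overall architecture matches the paper's: quasi-optimality from Theorem \ref{thm:ba_bound}, the split into $T_{\rm FE},T_{\rm BE},R_{\rm FE},R_{\rm BE}$, Lemmas \ref{approxK0} and \ref{hpapprox1} with $s_K=\alpha p_K$ and Stirling, and on $J_0$ the $L^1(J_0)$--$L^\infty(J_0)$ H\"older bound combined with Lemma \ref{lem:invineq}. Your lifting rewrite on $J_0$ is vacuous, since $(\kappa(\nabla u^1-\bPi_{hp}\nabla u^1),\lifting(\Phi))_K=0$ by the definition of $\bPi_{hp}$. So you do pass to an $L^1$ trace, exactly as the paper does. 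Your pointwise bound $|\nabla u^1|\lesssim r^{-\beta}$ is a valid substitute for the paper's scaled $L^1$ trace inequality plus \cite[Lemma 11.18]{ErnGuerVolI}, provided you also bound $\bPi_{hp}\nabla u^1$ on the low-order element.

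\textbf{The failing step: the BE contribution to the jump term.} You bound the FE weight globally by $\invrate\lesssim\sigma^{-n}p^2$ and say the loss is compensated by a larger $\mu$. The decisive contribution, however, comes from the terminal BE element. There the degree is fixed and the error is only $O(h_0^{1/2-\hat\beta})$ in $L^\infty$, and $\mu$ does not affect it. Your $L^\infty$ route therefore gives $\sigma^{-n}\sigma^{n(1-2\hat\beta)}=\sigma^{-2\hat\beta n}$, which does not decay. An $L^2$ route gives roughly $\sigma_{\rm FE}^{-n}\sigma_{\rm BE}^{(2-2\hat\beta)n}$, whose behaviour depends on the two gradings and on $\hat\beta$, not on $\mu$. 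The fix is to keep the weight local: for each FE trace edge $J$,
\[
\kappa\invrate_J\|u^2-\Phi^2\|^2_{L^2(J)}\lesssim\kappa_{\max}\,p_J^2\,\|u^2-\Phi^2\|^2_{L^\infty(J)}.
\]
Then $h_J^{-1}$ cancels against $|J|$ however the meshes interlace, and it suffices that $\|u^2-\Phi^2\|_{L^\infty(\Gamma_I)}$ be exponentially small. On the terminal element this follows from $|u^2(x)-u^2(0)|\le\int_0^x|(u^2)'|\lesssim x^{1/2-\hat\beta}$. It does not follow from Lemma \ref{lem:invineq}, which applies only to polynomials. The paper sidesteps the mismatch by using the BE data $p_{J_i}^2h_{J_i}^{-1}$ directly as the weight in \eqref{eq:expap2}; the localisation above is what justifies that step.

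\textbf{Two residual tools are too weak.} First, on the FE edges $J_i$, $i\geq1$, Lemma \ref{lem:estT3} has a constant with unspecified dependence on $r$. It therefore cannot be used with $s_K=\alpha p_K\to\infty$. You need an $s$-explicit trace bound for the $L^2$-projection. The paper takes this from \cite[Lemma 3.9]{HouSwbSueli02}, reduced to $\Phi_1(s,p)^2\le16\,\Gamma(p+2-s)/\Gamma(p+s+1)$, and then runs the same Stirling argument as for $T_{\rm FE}$. Second, for $R_{\rm BE}$ the $L^2$-projection requires $\p u^2/\p\bn^2\in L^2(\Gamma^2)$. Membership in $\cB^{0,1}_{\hat\beta}$ does not guarantee this: near a vertex the normal derivative may behave like $r^{-\gamma}$ with $\gamma\geq 1/2$. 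Lemma \ref{lem:estT45} is also neither local nor $s$-explicit, so on a geometric mesh its global $\max_J$ form is useless. The paper avoids both problems. It approximates the antiderivative $z$ of $\p u^2/\p\bn^2$ in $H^{1/2}(\Gamma^2)$ via \cite[Theorem 5.5]{GuoHeuSte96}, and then differentiates using $\|\p u^2/\p\bn^2-\psi'\|_{H^{-1/2}(\Gamma^2)}\lesssim\|z-\psi\|_{H^{1/2}(\Gamma^2)}$ from \cite[Lemma 3.2]{SteSur89}.
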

\begin{proof} 
For the sake of simplicity (compare also the discussion in e.g.
\cite{GuoHeuSte96} or \cite{BabGuoSteph90}) 
we shall consider the L-shaped domain $\Omega=[-1,1]^2 \setminus [0,1]^2$, 
where only one singularity is situated at the reentrant corner at the origin 
and the interface $\Gamma_I$ is a single straight arc from $(-1,-1)^\top$ to
$(0,0)^\top$ (cf. Figure \ref{figSingB}). 
The general case of multiple singularities and 
an interface consisting of more than one straight arc follows similarly.
Moreover, let $\overline\Gamma^2 = \cup_{i=1}^L \overline\Gamma^2_i$ be a
collection of straight arcs with $\Gamma_I = \Gamma^2_{L}$, and 
$\Gamma^2_1$ be the arc from $(0,0)^\top$ to $(0,1)^\top$.

Note that for all straight arcs $\gamma$ that
divide $\Omega$ into two polygonal domains $\Omega^1$ and $\Omega^2$
as well as for parts of the boundary $\gamma \subset \p \Omega$ 
for $u\in B^2_\beta(\Omega)$, there holds 
$u|_{\gamma}\in B^{1,2}_{\hat\beta}(\gamma)$ by Theorem \ref{thm:regtrace}. 
Let us assume that $u^2\in B^1_{\hat\beta_k}(\Gamma_k)$ for $k\in\{1,L\}$
with $0 < \hat\beta_k < 1/2$ and $u^2$ analytic on $\Gamma_\ell$ for 
$1 < \ell < L$. The case 
$u^2\in B^2_{\hat\beta_k}(\Gamma_k)$ for $k\in\{1,L\}$ follows similarly, cf. 
\cite[Remark after Thm. 5.5]{GuoHeuSte96}.

We proceed with the analysis in four steps and analogously 
to Theorem \ref{thm:apriori_sharp}, i.e. first we analyse the FE part, then the
BE part, and deal with terms due to the residual in a third step. Then we
conclude in step four.
In the following, we write $\lesssim$ to denote $\leq C$ with a generic constant $C$.
Using the same notation as in \eqref{apinf12}, 
we start from the quasi-optimality result from \eqref{ba_bound}:
\begin{equation*}
\|u-U\|^2_{\norme{\eta}} \lesssim
 \inf_{\Phi \in \spaceFBEMsig} \|u - \Phi\|_\norme{\eta}^2 
+ \res{\eta}(u)^2 
\equiv T_\textrm{FE} + T_\textrm{BE} + R.
\end{equation*}
\textbf{Step 1:} 
Using a scaled variant of the trace inequality, cf. e.g. 
\cite[(3.23)]{HouSwbSueli02},
\begin{equation}\label{eq:multtracescale}
	\|v\|^2_{L^2(\partial K)} \lesssim \left(h_K\|\nabla v\|^2_{L^2(K)} 
	+ h^{-1}_K \|v\|^2_{L^2(K)} \right).
\end{equation}
we find
\begin{equation*}
\begin{split}
T_\textrm{FE} 
&\leq C \kappa_{\max} \inf_{\Phi \in \spaceFEMsig}
\left\{
	\sum\limits_{K\in\meshFEMsig} \|\nabla (u^1-\Phi^1)\|^2_{L^2(\Omega^1)}
	\right.\\
   &\left.\qquad
   \qquad\quad
   +\sum\limits_{J\in\cT^\sigma_{hp}(\Gamma^1_I)} \frac{p^2_{K_J}}{h_{K_J}}
   		\left( h_{K_J} \|\nabla(u^1 - \Phi^1)\|^2_{L^2({K_J})} 
   			  +h_{K_J}^{-1} \|u^1 - \Phi^1\|^2_{L^2({K_J})}
   	    \right) 
\right\}\\
&\leq C \kappa_{\max} \inf_{\Phi \in \spaceFEMsig}
	\sum\limits_{K\in\meshFEMsig} 
	p^2_K 
	\left(
    \|\nabla (u^1-\Phi^1)\|^2_{L^2(K)}
   +h_{K_J}^{-2} \|u^1 - \Phi^1\|^2_{L^2(K)}
   	    \right), \\
\end{split}   	    
\end{equation*}
where we have used the bound \eqref{invrate_est} for $\invrate$. Then we
construct a globally continuous approximant $\Psi^1$ as in 
\cite[Lemma 4.57]{Swb98} and further obtain
\begin{equation}
\begin{split}
T_\text{FE}\lesssim& \quad\quad~~ \sum\limits_{K\in\frS_0}
h^{2(1-\beta)} \|u^1-\Psi^1\|^2_{H^{2,2}_\beta(K)}\\
&\quad+\sum\limits_{\overset{K\in\frS_j:}{1\leq j \leq n+1}} 
p^2_K \left(  h^{-2}_K\|u^1-\Psi^1\|^2_{L^2(K)}
+ \|u^1-\Psi^1\|^2_{H^1(K)}
+ h^{2}_K\|u^1-\Psi^1\|^2_{H^2(K)}\right)\\
&\quad+~~\:\sum\limits_{K\in\frR}  
p^2_K\left(  h^{-2}_K\|u^1-\Psi^1\|^2_{L^2(K)}
+ \|u^1-\Psi^1\|^2_{H^1(K)}
+  h^{2}_K\|u^1-\Psi^1\|^2_{H^2(K)}\right).
\end{split}
\end{equation}
Using Lemma \ref{approxK0} and Lemma \ref{hpapprox1} as well as the estimates
for $|\cdot|_{H^{m,\ell}_\beta(K)}$ from \eqref{defBlOmega} and \cite[(4.5.4)]{Swb98}, i.e. for
$u\in B^\ell_\beta(\Omega^1)$ for any $k\geq\ell$ there holds 
\[
\|u^1\|_{H^{k+\theta,\ell}_\beta(\Omega^1)} \leq 
C d^{k+\theta-\ell}
(k+\theta)^{1/2}
\Gamma(k+\theta-\ell+1),
\]
for $1\leq s_K \leq \min(p_K,k)$ yields the bound
\begin{equation*}
\begin{split}
T_\textrm{FE} 
\lesssim& 
	\left( \sum\limits_{K\in\frS_0}
	 \sigma^{2(n+1)(1-\beta)} \|u^1\|^2_{H^{2,2}_{\beta}(K)} \right.\\
	 &
	 \qquad+\:  \sum\limits_{\overset{\normalsize K\in\frS_j:}{1\leq j \leq n+1}}  
	     p^2_K \sigma^{2 (n + 2 - j) (1 - \beta)}
		 \frac{\Gamma(p_K - s_K + 1)}{\Gamma(p_K + s_K - 1 )} 
		 \left( \rho/2 \right)^{2s_K} \|u^1\|^2_{H^{s_K+3,2}_\beta(K)}\\
	 &\left.
	 \qquad+\quad	 
		\sum\limits_{K\in\frR}  
	     p^2_K
		 \frac{\Gamma(p_K - s_K + 1)}{\Gamma(p_K + s_K - 1 )} 
		 \left( \frac{h_K}{2} \right)^{2s_K} \|u^1\|^2_{H^{s_K+3}(K)}
		 \right)\\
\lesssim& 
	\sigma^{2(n+1)(1-\beta)}\\
	&\:\:\times\left( \sum\limits_{K\in\frS_0}
	 1 +
	   \sum\limits_{\overset{K\in\frS_j:}{1\leq j \leq n+1}}  
	     s_K p^2_K \sigma^{2 (1 - j) (1 - \beta)}
   		 \left( \frac{\rho d}{2} \right)^{2s_K} 
		 \frac{\Gamma(p_K - s_K + 1)}{\Gamma(p_K + s_K - 1 )} 
		  \Gamma(s_K+2)^2
		 \right)		 
\end{split}
\end{equation*}
and, since $u^1$ is analytic on $K\in\frR$, increasing the polynomial degree on
each element $K\in\frR$ in accordance with the polynomial degree in $\frS_{n+1}$
yields exponential convergence for the sum regarding the elements $K\in\frR$,
cf. \cite[Section 4.5.3]{Swb98}. We are left to bound the sum involving the
layers $\frS_j,j=0,\dots,n+1$.

For $K\in\frS_j$ we choose $s_K \equiv s_j = \alpha_j p_j$ with 
$\alpha_j\in(0,1)$. Then using Stirling's formula, i.e. 
$\Gamma(n+1) \sim \sqrt{2\pi n}(\tfrac{n}{e})^n$, it is easy to show that 
\[
\left( \frac{\rho d}{2} \right)^{2s_j} 
\frac{\Gamma(p_j - s_j + 1)}{\Gamma(p_j + s_j + 1 )} 
\Gamma(s_j+1)^2 \leq C p_j (F(\rho d,\alpha_j))^{p_j},\quad
F(d,\alpha):= \left(\frac{\alpha d}{2}\right)^{2\alpha} 
\frac{(1-\alpha)^{(1-\alpha)}}{(1+\alpha)^{(1+\alpha)}},
\]
which further yields
\begin{equation}\label{eqhpbracket}
\begin{split}
T_\text{FE}
\lesssim& 
	\sigma^{2(n+1)(1-\beta)}
	 \left( 1+ 
	   \sum\limits_{1\leq j\leq n+1}  
	     \sigma^{2 (1 - j) (1 - \beta)}
	     \alpha^3_j p^6_j (F(\rho d,\alpha_j))^{p_j}
		 \right).	 
\end{split}
\end{equation}
Now choosing $\alpha_j=\max\{1/p_j,\alpha_{\min}\},j\geq 1$, where 
$\alpha_{\min}$ is deduced from 
\[F(d,\alpha_{\min})=\inf_{\alpha\in(0,1)} F(d,\alpha).\]
Letting $F_{\min} := F(\rho d,\alpha_{\min})$ we find a lower bound for 
the slope 
$\mu=\mu(\sigma,\beta, F_{\min})$ analogously to 
\cite[Theorem 3.36, Theorem 4.51]{Swb98} and thus a suitable $\mu$ exists.
Selecting $p_j = \max \{2, \lfloor  \mu j\rfloor\}$, 
it can be shown (loc. cit.) that the parenthesis in \eqref{eqhpbracket} is
bounded independently of $n$ and we arrive at
\begin{equation}\label{expapFE}
T_\text{FE} \lesssim \sigma^{2(n+1)(1-\beta)}
\leq C e^{-2b_\text{FE} p_\text{FE}},
\end{equation}
where $C>0$ and $b_\textrm{FE}= -(1-\beta)\ln(\sigma) > 0$
do not depend on the discretisation parameters and $p_\text{FE}$
grows proportionally with the number of layers in the geometric finite
element mesh.\\

\noindent\textbf{Step 2:}
In order to estimate $T_\textrm{BE}$, we choose $\Psi^2\in\spaceBEMsig$ as in 
\cite[Theorem 5.5]{GuoHeuSte96}, such that
\begin{equation*}
\|u^2-\Psi^2\|^2_{H^{1/2}(\Gamma^2)} \leq C e^{-2b_{\text{BE},1} p_\text{BE}}
\end{equation*}
follows immediately with $C>0, b_\text{BE}>0$, and $p_\text{BE}$
growing proportinally with the numbers of layers in the geometric boundary
element mesh. We are left to control the jump term on the boundary element side.
We proceed as in the proof of \cite[Theorem 5.5]{GuoHeuSte96}, also compare 
\cite[Chapter 3]{Swb98}. 

Using \cite[Lemmas 5.3--5.4]{GuoHeuSte96} (cf. also 
\cite[Lemma 3.39,Lemma 3.41]{Swb98}),which are one
dimensional counterparts of Lemma \ref{approxK0} and Lemma \ref{hpapprox1},
since $u\in B^{1}_{\hat\beta_L}(\Gamma^2_L)$, with 
$1\leq s_{J_i}\leq p_i,i\geq 1$ we arrive at
\begin{equation}\label{eq:expap2}
\begin{split}
&\sum\limits_{J\in\cT^{\sigma}_{hp}(\Gamma^2_I)} 
	\kappa\invrate \|u^2-\Psi^2\|^2_{L^2(J)}\\
&\quad\leq C \kappa_{\max} 
	\Bigg(
	 h_{J_0}^{-1} \sigma^{2(n+1)(1-\hat\beta_L)} 
	 |u^2|^2_{H^{2, 1}_{\hat\beta_L}(J_0)} 
	 \\
	 &\qquad+  \sum\limits_{i=1}^{n}  
	     p^2_{J_i} h^{-1}_{J_i}
	     \sigma^{2(n + 2 - i) (1 - \hat\beta_L)}
		 \frac{\Gamma(p_{J_i} - s_{J_i} + 1)}{\Gamma(p_{J_i} + s_{J_i} + 3)} 
		 \left( \frac{\rho}{2} \right)^{2s_{J_i}} 
		 \|u^2\|^2_{H^{s_{J_i}+1}_{\hat\beta_L}(J_i)}
	     \Bigg)\\
&\quad\leq C \kappa_{\max} 
	\sigma^{2(n+1)(1/2-\hat\beta_L)} \\
	&\qquad\times
	\Bigg(
	 1 + 
	   \sum\limits_{i=1}^{n}  
	     p^2_{J_i} \sigma^{2(1 - i) (1/2 - \hat\beta_L)}
		 \frac{
		 \Gamma(p_{J_i} - s_{J_i} + 1)
		 }{\Gamma(p_{J_i} + s_{J_i} + 3 )} 
		 \left(\frac{\rho d}{2} \right)^{2s_{J_i}}\Gamma(s_{J_i}+1)^2 		 
	\Bigg),
\end{split}
\end{equation}
where $J_i$ realizes an enumeration of the layers of
the geometric mesh on $\Gamma_{L}$.
Now, following the lines of the proof of \cite[Thm. 5.5]{GuoHeuSte96}, we can
bound the parenthesis in \eqref{eq:expap2} similar to \eqref{eqhpbracket},
which yields
\begin{equation*}	
\sum\limits_{J\in\cT^{\sigma}_{hp}(\Gamma^2_I)} 
	\kappa\invrate \|u^2-\Psi^2\|^2_{L^2(J)}
	\leq C \kappa_{\max} \sigma^{2(n+1)(1/2-\hat\beta_L)}
	\leq C e^{-2b_{\text{BE},2} p_\text{BE}}.
\end{equation*}
Collecting 
the terms shows that
\begin{equation}\label{expapBE}
	T_\textrm{BE} \leq C e^{-2 b_{\text{BE},1} p_\text{BE}} 
	+ C e^{-2 b_{\text{BE},2} p_\text{BE}}
	\leq C e^{-2 b_\text{BE} p_\text{BE}},	
\end{equation}
with $b_\textrm{BE} = \min(b_\textrm{BE,1},b_\textrm{BE,2})$.\\
\textbf{Step 3:}
We proceed with bounding the residual $\res{\kappa \invrate}(u)$. 
By \eqref{eq:resrep}
\[
	\resphi(u,\Phi) = 
	\sum\limits_{J\in\cT^\sigma_{hp}(\Gamma^1_I)}
	\big< \kappa (\nabla u^1 - \bPi_{hp}( \nabla u^1))\cdot \bn^1,
		 [\Phi]\big>_{J}
  - \big<(\cS - \hat \cS) u^2 - (\cN - \hat \cN) f^2,
	      \Phi^2 \big>_{\Gamma^2}.
\]
We note that for $J_0\subset\p K_0$ with $K_0\in\frS_0$ 
the components of $\nabla u^1$ belong to $H^{1,1}_\beta(K_0)$ and 
there holds $\nabla u^1\in L^1(J_0)$. We find by virtue of Lemma \ref{lem:invineq}
\[
\begin{split}
\big< \kappa (\nabla u^1 - \bPi_{hp}( \nabla u^1))\cdot\bn^1,[\Phi]\big>_{J_0}
\lesssim&\:  \kappa_{\max} 
\|\nabla u^1 - \bPi_{hp}(\nabla u^1)\|_{L^1(J_0)}
\|[\Phi]\|_{L^\infty(J_0)} \\
\lesssim&\: 
\kappa_{\max}  
\|\nabla u^1 - \bPi_{hp}(\nabla u^1)\|_{L^1(J_0)}
\tfrac{p_{J_0}}{h^{1/2}_{J_0}} \|[\Phi]\|_{L^2(J_0)}\\
\lesssim&\:  \kappa_{\max}  
\|\nabla u^1 - \bPi_{hp}(\nabla u^1)\|_{L^1(J_0)}
\|\Phi\|_{\norme{\eta}},
\end{split}	
\]
while for all $J \not = J_0$  by \eqref{eq:weightedCS} there holds
\[
\begin{split}
\big< \kappa (\nabla u^1 - \bPi_{hp}( \nabla u^1))\cdot \bn^1,
[\Phi]\big>_{J} 
\leq&\: 
\|\eta^{-1/2}(\nabla u^1 - \bPi_{hp}( \nabla u^1))\|_{L^2(J)} 
\|\eta^{1/2}[\Phi]\|_{L^2(J)} \\
\leq& \: 
\|\eta^{-1/2}(\nabla u^1 - \bPi_{hp}( \nabla u^1))\|_{L^2(J)}
\|\Phi\|_{\norme{\eta}}.
\end{split}
\]
For the consistency error from the boundary element part we infer
\[
\begin{split}
	\big<(\cS - \hat \cS) u^2 - (\cN - \hat \cN) f^2,\Phi^2 \big>_{\Gamma^2}
	\leq& \|(\cS - \hat \cS) u^2 - (\cN - \hat \cN) f^2\|_{H^{-1/2}(\Gamma^2)} 
	\|\Phi^2\|_{H^{1/2}(\Gamma^2)}\\
	\leq& c^{-1/2}_{\hat\cS}
	\|(\cS - \hat \cS) u^2 - (\cN - \hat \cN) f^2\|_{H^{-1/2}(\Gamma^2)} 
	\|\Phi\|_{\norme{\eta}}.
\end{split}	
\]
Hence, with an enumeration of the edges $J\in\cT^\sigma_{hp}(\Gamma^1_I)$
\[
\begin{split}
	\big(\res{\eta}(u)\big)^2 \lesssim&\:
	\|\nabla u^1 - \bPi_{hp}(\nabla u^1)\|^2_{L^1(J_0)} +
	\sum\limits_{\ell=1}^n 
	\|\eta^{-1/2}(\nabla u^1 - \bPi_{hp}( \nabla u^1))\|^2_{L^2(J_\ell)} \\
	&+\|(\cS - \hat \cS) u^2 - (\cN - \hat \cN) f^2\|^2_{H^{-1/2}(\Gamma^2)} 
	\equiv R_\textrm{FE} + R_\textrm{BE}.
\end{split}	
\]
For $R_{\textrm{FE}}$, by setting $\xi:=\nabla u^1 - \bPi_{hp}(\nabla u^1)$,
we start by bounding the $L^1$-trace on $J_0$ using a standard trace theorem
with a scaling argument 
\[
	\|\xi\|_{L^1(J_0)} \leq C \left(h_{K_{J_0}}^{-1} \|\xi\|_{L^1(K_{J_0})}
	+ \|\nabla\xi\|_{L^1(K_{J_0})}
	\right).
\]
Then, since $p_{K_{J_0}}=1$ by construction, using \cite[Lemma 11.18]{ErnGuerVolI}
componentwise for $m=0,1$ and $r=1$ we arrive at
\[
	\|\nabla u^1 - \bPi_{hp}(\nabla u^1)\|_{L^1(J_0)} 
	\leq C |\nabla u^1|_{W^{1,1}(K_{J_0})}.
\]
Since $D^2 u^1\in H^{0,0}_\beta(K_{J_0})$ and 
$H^{0,0}_\beta(K_{J_0}) \subset L^1(K_{J_0})$, we can further estimate the right hand side 
(cf. \cite[Lemma 1.3.2]{Wihler02}) and obtain
\begin{equation}\label{eq:L1tracebnd}
	\|\nabla u^1 - \bPi_{hp}(\nabla u^1)\|_{L^1(J_0)} 
	\leq C h^{1-\beta}_{K_{J_0}} |u^1|_{H^{2,2}_\beta(K_{J_0})}.
\end{equation}
For the remaining terms of $R_\textrm{FE}$ we employ 
\cite[Lemma 3.9]{HouSwbSueli02}, which yields for any $u\in H^k(K), k\geq 1, p\geq 0$ and
integer $s\in\{1:\min(p+1,k)\}$
\begin{equation}\label{eq:L2projtracebnd}
	\|u-\Pi_{hp}\|_{L^2(\partial K)} \leq C \Phi_1(s,p) h^{s-1/2}_K 
	|u|_{H^s(K)},
\end{equation}
where 
\begin{equation*}
\begin{split}
	\Phi_1(s,p) := (2p+1)^{-1/2} &\left[ 
	\left( \frac{\Gamma(p+2-s)}{\Gamma(p+s)} \right)^{1/2}+
	\left( \frac{\Gamma(p+3-s)}{\Gamma(p+1+s)} \right)^{1/2}
	\right]\\ 
	&+ 
	\left( \frac{\Gamma(p+2-s)}{\Gamma(p+2+s)}\cdot 
		   \frac{\Gamma(p+3-s)}{\Gamma(p+1+s)}
	\right)^{1/4}
	+ \left( \frac{\Gamma(p+2-s)}{\Gamma(p+2+s)} \right)^{1/2}.
\end{split}
\end{equation*}
We can simplify the expression for $\Phi_1(s,p)$ --- by using the inequality 
$(a+b)^2\leq 2a^2+2b^2$ twice, the identity $\Gamma(z+1)=z\Gamma(z)$ and 
pulling out the factor $\tfrac{\Gamma(p+2-s)}{\Gamma(p+s)}$ --- to
\begin{equation*}
\Phi_1(s,p)^2 \leq 16 \cdot \frac{\Gamma(p+2-s)}{\Gamma(p+s+1)}.
\end{equation*}

Hence, we obtain by employing \eqref{eq:L1tracebnd} and 
\eqref{eq:L2projtracebnd} for $i\geq 1$ with $s_i\in\{1:\min(p_{K_{J_i}}+1,k_i)\}$ for any 
$u^1\in H^{k_i}(K_{J_i})$ with integer $k_i\geq 1$ the following bound
\begin{equation}\label{expbndRFE}
\begin{split}
 &
 	\|\nabla u^1 - \bPi_{hp}(\nabla u^1)\|^2_{L^1(J_0)} +
	 \sum\limits_{i=1}^n 
	\|\eta^{-1/2}(\nabla u^1 - \bPi_{hp}( \nabla u^1))\|^2_{L^2(J_i)} \\
	\lesssim& 
	h^{2(1-\beta)}_{K_{J_0}} |u^1|^2_{H^{2,2}_\beta(K_{J_0})}
	+
	\sum\limits_{i=1}^n 
	\frac{h_{K_{J_i}}}{p^2_{K_{J_i}}}
	h^{2s_i-1}_{K_{J_i}}  \frac{\Gamma(p_{K_{J_i}}-s_i+2)}{\Gamma(p_{K_{J_i}}+s_i+1)} 
	|u|^2_{H^{s_i}(K_{J_i})} \\   
	\lesssim& 
	\sigma^{2(n+1)(1-\beta)} |u^1|^2_{H^{2,2}_\beta(K_{J_0})}\\
	&\qquad\quad+ \sum\limits_{i=1}^n 
	\sigma^{2(n+2-i)(1-\beta)}  
	\frac{\Gamma(p_{K_{J_i}}-s_i+1)}{\Gamma(p_{K_{J_i}}+s_i-1)} 
	\left(\frac{\rho}{2}\right)^{2s_i}
	\|u\|^2_{H^{s_i+3,2}_\beta(K_{J_i})},
\end{split}	
\end{equation}
where the last bound follows from the fact that $u\in\cB^2_\beta(\Omega^1)$, such that
$u$ is analytic on elements $K\not\in \frS_0$, and there holds 
\[
|u|_{H^{s_i}(K)} \leq |u|_{H^{s_i+3}(K)} 
\leq \|\Phi^1_{-(\beta+s_i+3-2)}\|_{L^2(K)} \|u\|_{H^{s_i+3,2}_\beta(K)},
\]
while keeping in mind that $h_{K_{J_i}}\leq \rho\text{dist}(0,K_{J_i})$ with 
$\rho = \max\{1,\tfrac{1-\sigma}{\sigma}\}$ for $i\geq 1$ 
(compare Lemma \ref{hpapprox1} and e.g. \cite[Lemma 4.48,4.50]{Swb98}).

Analogously to bounding the term $T_\textrm{FE}$, it can be shown that last bound of
\eqref{expbndRFE} is bounded independently of $n$, and thus
\begin{equation}\label{expRFE}
R_\textrm{FE} \leq C \sigma^{2(n+1)(1-\beta)}\leq C e^{-2 b_{R_\textrm{FE}}
p_\text{FE}},
\end{equation}
where $C>0, b_{R_\textrm{FE}}>0$ are independent of $n$ and $p_\text{FE}$ grows proportionally
with the number of layers in the geometric mesh of $\Omega^1$.

Finally, in a similar fashion as the quasi-uniform case we find that
\begin{equation*}
R_\textrm{BE} = 
\|(\cS - \hat \cS)u^2 - (\cN - \hat\cN)f^2\|^2_{H^{-1/2}(\Gamma^2)}
\leq C \inf_{\Phi^2 \in \spaceBEMgradsig}
\left\|\frac{\p u^2}{\p\bn^2} - \Phi^2\right\|^2_{H^{-1/2}(\Gamma^2)}.
\end{equation*}
Since $u^2\in\cB^{1}_{\hat\beta}(\Gamma^2)$, there holds
$\p u^2/\p\bn^2 \in \mathcal B^{0}_{\tilde{\beta}}(\Gamma^2)$ for 
suitable $\tilde{\beta}$. Letting 
$z(s) = \int^s_0 \tfrac{\p u^2}{\p\bn^2}(t)\textrm{d} t$,
where integration is carried out along $\Gamma^2$ and $s$ is the arc length parameter of 
$\Gamma^2$, then 
$z\in\cB^1_{\tilde{\beta}}(\Gamma^2)$. Since, as above, by 
\cite[Theorem 5.5]{GuoHeuSte96} there exists a polynomial $\psi$, such that 
\[
\|z-\psi\|_{H^{1/2}(\Gamma^2)} \leq C e^{-b p_\textrm{BE}}.
\]
Set $\chi = \psi'$. Then due to \cite[Lemma 3.2]{SteSur89}, there holds
$(z-\psi)'=\tfrac{\p u^2}{\p\bn^2} - \chi\in H^{-1/2}(\Gamma^2)$ and
\[
	\|\tfrac{\p u^2}{\p\bn^2} - \chi\|_{H^{-1/2}(\Gamma^2)}
	\leq C \|z - \psi\|_{H^{1/2}(\Gamma^2)},
\]
which yields
\begin{equation}\label{expRBE}
 \|(\cS - \hat \cS)u^2 - (\cN - \hat\cN)f^2\|^2_{H^{-1/2}(\Gamma^2)}
 \leq C e^{-2b_{R_\textrm{BE}} p_{\text{BE}}}.
\end{equation}

\textbf{Step 4:}
Collecting the estimates \eqref{expapFE},\eqref{expapBE},\eqref{expRFE} and
\eqref{expRBE} yields the assertion with 
$b=\min\{b_\text{FE},b_{\text{BE}},b_{R_1},b_{R_2}\}$ and 
$p = \min\{ p_\text{BE}, p_\text{FE}\}$.
\end{proof}

\section{Algebraic formulation}

The algebraic system corresponding to the weak formulation (\ref{weak_c}) will be described in
this section. We denote by $U^1_I$ and $U^2_I$ the degrees of freedom associated with the
interface $\Gamma_I$ from the FE side and from the BE side, respectively. The remaining degrees
of freedom from the FE side are denoted by $U^1_O$ and from the BE side by $U^2_O$. Then the
algebraic problem has the following structure

\begin{equation*}
(\frA + \frB + \frC)
\left( 
\begin{array}{c}
U^1_O\\
U^1_I\\
U^2_I\\
U^2_O
\end{array}
\right)=
\left( 
\begin{array}{c}
l^1_O\\
l^1_I\\
l^2_I\\
l^2_O
\end{array}
\right).
\end{equation*}

The matrix $\frA$ is the stiffness matrix of the finite element and the boundary element part
produced with the term 
$(\kappa \nabla U, \nabla \Phi)_{\Omega^1} + \big<\hat \cS U, \Phi \big>_{\Gamma^2}$ in
(\ref{weak_c}) without coupling terms,

\begin{equation*}
\frA:=
\left( 
\begin{array}{cccc}
A_{OO}&A_{IO}^T&0&0\\
A_{IO}&A_{II} &0&0\\
0&0  &S_{II}&S_{IO}^T\\
0&0  &S_{IO}&S_{OO}\
\end{array}
\right)
:=
\left( 
\begin{array}{cc}
\cA&0\\
0&\cS
\end{array}
\right).
\end{equation*}
Terms $- \big< \kappa \nabla U^1 \cdot \bn^1,[\Phi] \big>_{\Gamma_I}
- \big<[U],\kappa \nabla \Phi^1 \cdot \bn^1 \big>_{\Gamma_I}$ in (\ref{weak_c}) constitute the
matrix $\frB$,
\begin{equation*}
\frB:=
\left( 
\begin{array}{cccc}
0&0&(B_{IO}^{12})^T&0\\
0& B_{II}^{11}+(B_{II}^{11})^T &(B_{II}^{12})^T&0\\
B_{IO}^{12}&B_{II}^{12}& 0 &0\\
0&0&0&0
\end{array}
\right).
\end{equation*}
Finally, the stabilization term $\big< \eta[U], [\Phi]  \big>_{\Gamma_I}$ in (\ref{weak_c})
constitutes the matrix $\frC$,
\begin{equation*}
\frC:=
\left( 
\begin{array}{cccc}
0&0&0&0\\
0&C^{11}&(C^{12})^T&0\\
0&C^{12}&C^{22}&0\\
0&0&0&0
\end{array}
\right).
\end{equation*}

The matrices $\frB$, $\frC$ are sparse as well as the finite element block $\cA$ of the matrix
$\frA$. The boundary element block $\cS$ of the matrix $\frA$ is a dense matrix.

\section{Numerical experiments}

We present a series of numerical examples for the $hp$-FE/BE coupling with     
Nitsche's method on uniform meshes, non-matching across the coupling interface and
meshes with geometric refinement towards the singularity. The numerical
experiments are designed to illustrate the $hp$-convergence behaviour predicted by the
analysis, in particular the robustness with respect to non-matching meshes and local polynomial
refinement.
First, we consider an example with a smooth solution and investigate convergence
of the $h$-version for different polynomial degrees. We also show experimentally
that the $p$-version converges with an exponential rate. Then, an example with a
singular solution will be presented for two configurations. 
We show a figure (cf. \ref{figSingC}) for the convergence of $h$- and $p$-versions for one
configuration and furthermore show that the $hp$-version with geometrically
refined meshes converges exponentially in Figures \ref{figSingA} and
\ref{figSingB}. For the convergence graphs we denote the total amount of degrees of freedom by
$N := N_{\textrm{FE}} + N_{\textrm{BE}}$, where $N_{\textrm{FE}}$ and $N_{\textrm{BE}}$ are the
number of degrees of freedom in the finite element domain and the boundary element domain, 
respectively.

\subsection{Example 1: smooth solution} \label{subsec:smooth}

In the first example we consider a square domain $\Omega := [-1,1]\times[-1,1] \subset \R^2$.
We introduce the boundary element domain $\Omega^2$ and its complement $\Omega^1$, where finite
elements are used (Fig. \ref{fig:hp_smth})
\begin{equation} \label{Omegadec}
\begin{array}{l}
\Omega^2:= [-1,0]\times[-\frac{1}{2},\frac{1}{2}], \\[1ex]
\Omega^1:= \Omega \setminus \Omega^2.
\end{array}
\end{equation}
\noindent The interface boundary is given by
\begin{equation} \label{Gamma_I}
\begin{array}{l}
\Gamma_I := \partial \Omega^1 \cap \partial \Omega^2.
\end{array}
\end{equation}
Let $G(\bx,\by) $ be the fundamental solution of the two-dimensional Laplace operator
\[
G(\bx,\by) := -\dfrac{1}{2\pi} \log|\bx-\by|.
\]
In particular for $\by \notin \overline \Omega$ there holds
\[
\Delta_\bx G(\bx,\by) = 0, \qquad \forall \bx \in \Omega \subset \R^2.
\]
We fix $\by=(-1,-2)$ and let $\bx=(x_1,x_2)$ be variable. We define
\begin{equation} \label{u1def}
u(\bx):=-2\pi\dfrac{\partial}{\partial x_1} G((x_1,x_2),(-1,-2)) = 
\dfrac{x_1+1}{(x_1+1)^2+(x_2+2)^2}.
\end{equation}
The function $u(\bx)$ is an exact solution of problem (\ref{p1}) with
\[
\Gamma_D:=\{-1\}\times[-1,1], 
\qquad \Gamma_N=\partial \Omega \setminus \Gamma_D,
\qquad f=0,
\qquad g = \dfrac{\partial u}{\partial n}\bigg|_{\Gamma_N}.
\]
Moreover, the function $u(\bx)$ is an exact solution of the interface problem (\ref{p2}) with
the decomposition (\ref{Omegadec}) and with the interface boundary (\ref{Gamma_I}).

In order to study the convergence of the method, we choose $\eta_0=2.0$ and perform a series of
experiments for the uniform $h$- and $p$-version. 

The numerical experiments for the $p$-version are obtained for fixed meshes with the mesh size
relation $h_1/h_2 = 4/5$ on the interface boundary with $20$ boundary elements and $32$ finite
elements with increasing $p_1\equiv p_2$. Since the exact solution $u(\bx)$ is analytic in 
$\Omega$, exponential convergence of the $p$-version is observed. The results, obtained for the
$h$-version with $p=1,2,3$, are compared with the $p$-version and are given in Fig.
\ref{fig:hp_smth}. 
\begin{figure}[ht!] 
\begin{tabular}{cc}
\resizebox{0.4\textwidth}{!}{\includegraphics{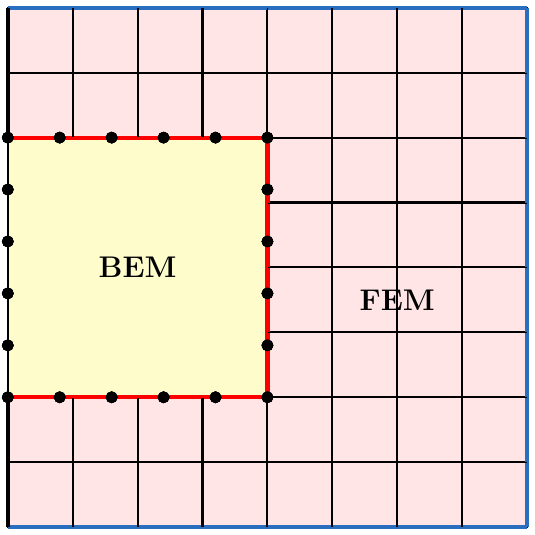}} &
\resizebox{0.5\textwidth}{!}{\begin{tikzpicture}
\definecolor{alexeyblue}{RGB}{40,110,191}
\begin{loglogaxis}[%
width=4.521in,
height=3.566in,
at={(0.758in,0.481in)},
scale only axis,
xmode=log,
xmin=1,
xmax=1000,
xminorticks=true,
xlabel={$\sqrt{N}$},
ymode=log,
ymin=1e-07,
ymax=1,
yminorticks=true,
ylabel={$\displaystyle\sqrt{a_{hp}(u-U_{hp},u-U_{hp})}$},
axis background/.style={fill=white},
title style={font=\bfseries},
title={Convergence in the energy norm},
label style={font=\Large},
legend style={at={(0.6,0.77)},anchor=south west,legend cell align=left,align=left,draw=white!15!black}
]
\addplot [color={rgb:red,100;green,10;blue,10},solid,ultra thick,mark=square,mark options={solid},mark size=4.0]
  table[row sep=crcr]{%
6	0.12217525043349\\
9.48683298050514	0.0554442739088191\\
16.431676725155	0.0267531753690621\\
30.2985148150862	0.0131478190392329\\
58.0172388174411	0.00651643832215361\\
};
\addlegendentry{$h$-version, $p=1$};

\addplot [color={rgb:red,5;green,30;blue,5},solid,ultra thick,mark=o,mark options={solid},mark size=4.0]
  table[row sep=crcr]{%
9.69535971483266	0.0174525010392136\\
16.5529453572468	0.00401330477823672\\
30.364452901378	0.000980195662143903\\
58.0517010948	0.000242671697681587\\
113.463650567043	6.03723000413043e-05\\
};
\addlegendentry{$h$-version, $p=2$};

\addplot [color=black,solid,ultra thick,mark=triangle,mark options={solid},mark size=4.0]
  table[row sep=crcr]{%
13.2664991614216	0.00242123320362288\\
23.5372045918796	0.000269554021887\\
44.2492937796752	3.3707120574865e-05\\
85.7787852560294	4.21021451366194e-06\\
168.896417960832	5.25555686092293e-07\\
};
\addlegendentry{$h$-version, $p=3$};

\addplot [color=alexeyblue,solid,ultra thick,mark=x,mark options={solid,line width=2pt},mark size=4.0]
  table[row sep=crcr]{%
9.48683298050514	0.0554442739088191\\
16.5529453572468	0.00401330477823672\\
23.5372045918796	0.000269554021887\\
30.4959013639538	1.75742048905652e-05\\
37.4432904536981	1.13052831288366e-06\\
};
\addlegendentry{$p$-version, $p=1,\dots,5$};

\end{loglogaxis}
\end{tikzpicture}%
} 
\end{tabular}
\caption{Convergence of the $h$-and $p$-versions for example 1.}\label{fig:hp_smth}
\end{figure}

\subsection{Example 2: singular solution} \label{subsec:sing}
For our second example we choose $\Omega$ to be an L-shaped domain
\begin{equation} \label{2:Omega}
\Omega:= [-1,1]^2 \setminus [0,1]^2
\end{equation}
and for the first configuration consider the decomposition 
\begin{equation} \label{2:Omegadec}
\begin{array}{l}
\Omega^2:= \{[-\frac{1}{2},\frac{1}{2}]\times[-\frac{1}{2},\frac{1}{2}] \}
\setminus \{[0,\frac{1}{2}]\times[0,\frac{1}{2}]\}, \\[1ex]
\Omega^1:= \Omega \setminus \Omega^2,
\end{array}
\end{equation}
as shown in Fig. \ref{figSingA}. 
Furthermore let
\begin{equation} \label{2:Gamma_I}
\begin{array}{c}
\Gamma_D:=\{\{0\} \times [0,1]\} \cup \{[0,1] \times \{0\}\}, \\[1ex]
\Gamma_N := \partial \Omega \setminus \Gamma_D,
\qquad \Gamma_I := \partial \Omega^1 \cap \partial \Omega^2.
\end{array}
\end{equation}
Note that in this decomposition the singularity at the origin is completely encapsulated in the
boundary element domain $\Omega^2$.

For a second configuration we consider $\Omega$ as above, but this time split the L-shaped
domain symmetrically at the line from $(-1,-1)^\top$ to $(0,0)^\top$ as depicted in Fig.
\ref{figSingB}, i.e.
\begin{equation}
\begin{array}{l}
\Omega^2 := \Omega\cap \{x,y\in[-1,1]: x-y\leq 0 \},\\[1ex]
\Omega^1 := \Omega\setminus \Omega^2,
\end{array}
\end{equation}
with $\Gamma_D,\Gamma_N$ and $\Gamma_I$ as in \eqref{2:Gamma_I}. In this case the singularity
is contained in both subdomains and is also at the end of interface $\Gamma_I$.
For this kind of domain $\Omega$, $r^{2/3}$ is a typical singularity situated at
the origin. 
As exact solution we choose 
\begin{equation*}
u(r,\vartheta) := r^{2/3} \sin(\tfrac{2}{3}(\vartheta-\tfrac{\pi}{2})),
\end{equation*}
where $(r,\vartheta)$ denote the polar coordinates of the plane.

It is easy to check, that $u(r,\vartheta)$ is an exact solution of (\ref{p1}) and (\ref{p2})
for (\ref{2:Omega}) with (\ref{2:Gamma_I}) in both configurations.
There holds 
\[
u \in H^1(\Omega), \quad u \notin H^2(\Omega).
\]
It is possible to show that $u\in H^{5/3-\varepsilon}(\Omega)$, $\forall \varepsilon >0$,
therefore the convergence rate $2/3-\epsilon$ for the $h$-version in the $H^1$-seminorm is
optimal. The theoretical convergence rate agrees with the numerical convergence rate 
$\approx 0.66$, as shown in Figure \ref{figSingC}.\\

It is known from the work of Stephan and Suri \cite{StSu91}, that the
convergence rate for the $p$-version of the BEM in the energy norm is twice that
the corresponding convergence rate of the $h$-version. Therefore, we expect the
convergence rate $4/3$ in the energy norm in our example (cf. Figure \ref{figSingC}).
Due to Theorem \ref{thm:apriori_sharp}, the $p$-version of our FE/BE Nitsche's coupling is
suboptimal, caused by the factor $p^{1/2}$. This is a disadvantage observed
theoretically in Theorem \ref{thm:apriori_sharp}, but could not be verfied in
the numerical experiments. 

\begin{figure}
\centering
\begin{tabular}{cc}
\resizebox{0.4\textwidth}{!}{\includegraphics{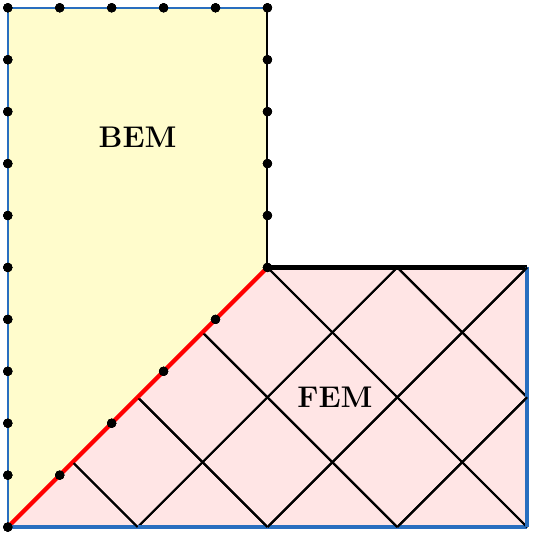}} &
\resizebox{0.52\textwidth}{!}{\includegraphics{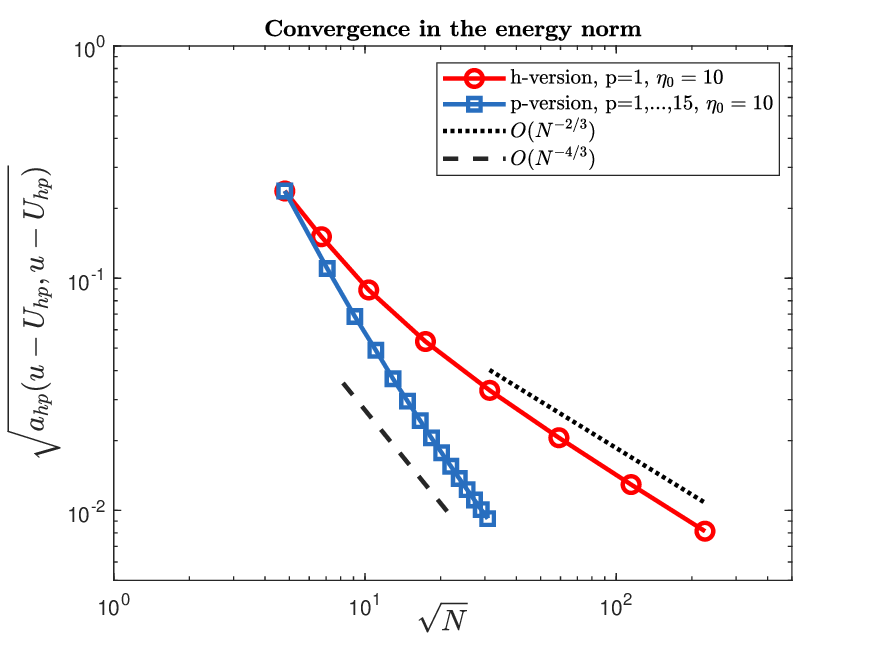}}
\end{tabular}
\caption{Convergence of the $h$-, $p$-versions for the split L-shape configuration.}
\label{figSingC}
\end{figure}

\begin{figure}
\centering
\begin{tabular}{cc}
\resizebox{0.4\textwidth}{!}{\includegraphics{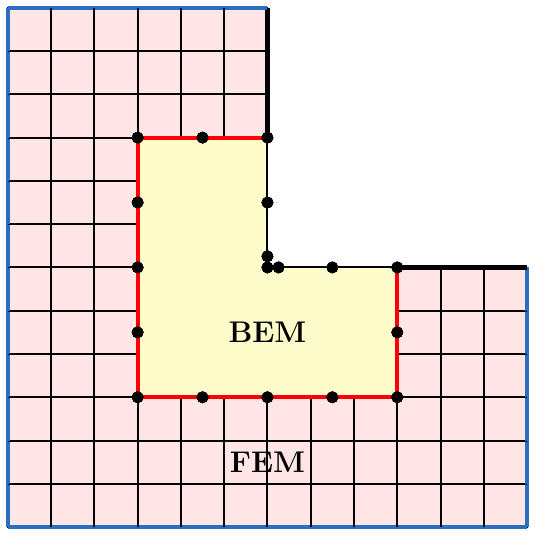}} &
\resizebox{0.56\textwidth}{!}{\includegraphics{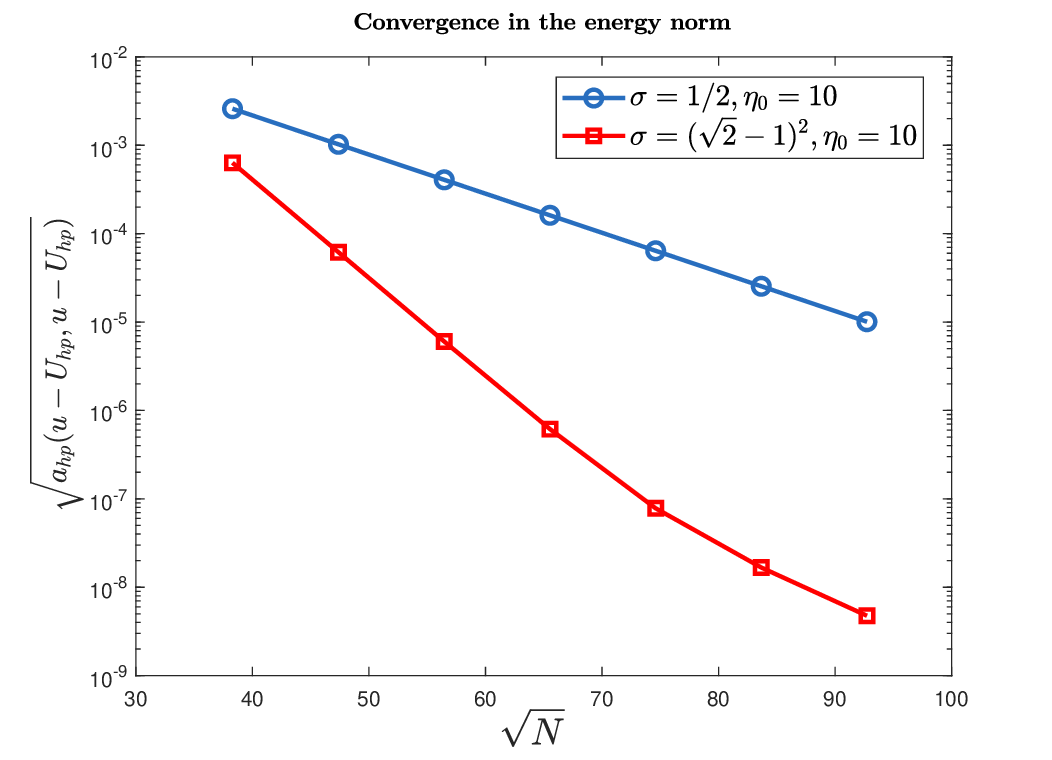}}	
\end{tabular}
\caption{Convergence of the $hp$-version for the first configuration.}\label{figSingA}
\end{figure}

\begin{figure}
\centering
\begin{tabular}{cc}
\resizebox{0.4\textwidth}{!}{\includegraphics{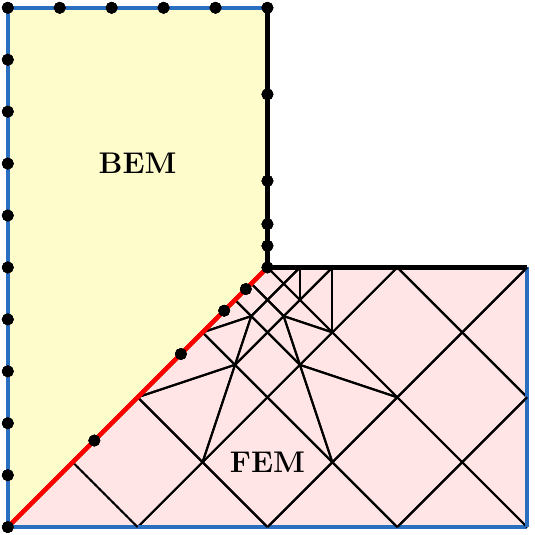}} &
\resizebox{0.5\textwidth}{!}{\includegraphics{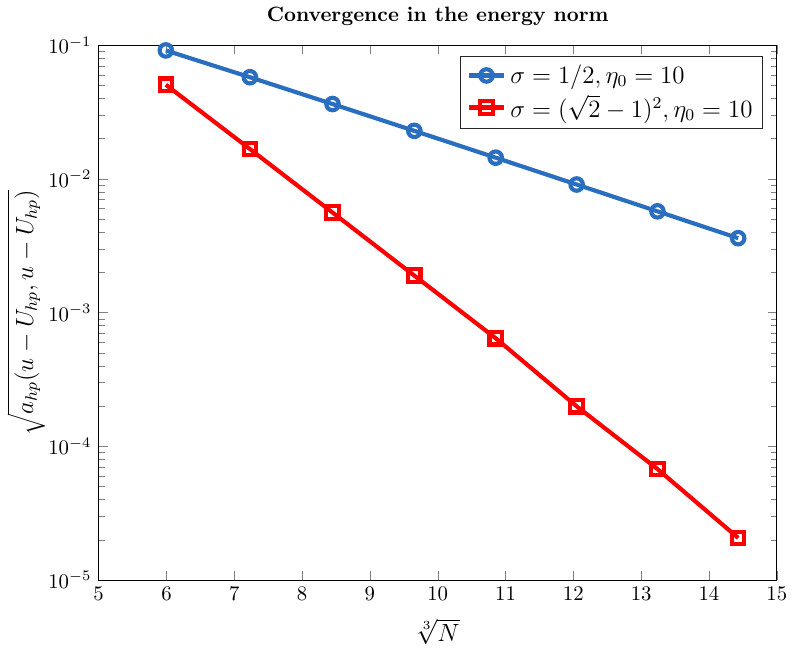}}	
\end{tabular}
\caption{Convergence of the $hp$-version for the split L-shape configuration.}\label{figSingB}
\end{figure}

For the given $hp$-configurations of the L-shaped domain in Figures \ref{figSingA} and
\ref{figSingB}, we observe exponential convergence as expected.
It is noteworthy, that encapsulating the singularity with the boundary element
subdomain as in the first configuration is advantageous from a computational standpoint, since
exponential convergence occurs with respect to the square root of the total degrees of freedom
$N$.
In the split L-shape configuration the exponential convergence scales with $\sqrt[3]{N}$ as the
finite element domain dominates the growth of the degrees of freedom, since $N_{\textrm{FE}}$
is proportional to $p^3$ in contrast to $N_{\textrm{BE}}\sim p^2$.
Due to the dependence of $\invrate$ on the local mesh size the conditioning of the
system matrix increases dramatically in the case of geometric refinement
of the coupling interface $\Gamma_I$.

\section*{Acknowledgements}
Peter Hansbo was supported by the Swedish Research Council Grant No. 2022-03908.

\bibliography{refs}

\end{document}